\newtheorem{theorem}{Theorem}[section]
\newtheorem{corollary}[theorem]{Corollary}
\newtheorem{lemma}[theorem]{Lemma}
\theoremstyle{definition}
\newtheorem{definition}[theorem]{Definition}
\newtheorem{example}[theorem]{Example}
\newtheorem{proposition}[theorem]{Proposition}
\newtheorem{remark}[theorem]{Remark}
\newcommand{\oset}[2]{{\mathpalette\o@set{{#1}{#2}}}}
\newcommand{\o@set}[2]{\o@@set{#1}#2}
\newcommand{\o@@set}[3]{%
  \vbox{\offinterlineskip
    \ialign{\hfil##\hfil\cr
      $\m@th\o@set@demote{#1}#2$\cr
      \noalign{\vskip0.2pt}
      $\m@th#1#3$\cr
    }%
  }%
}
\newcommand{\o@set@demote}[1]{%
  \ifx#1\displaystyle\scriptstyle\else
  \ifx#1\textstyle\scriptstyle\else
  \scriptscriptstyle\fi\fi
}
\newcommand{\owave}[1]{\oset{\rotatebox{180}{\uwave{\hspace{\widthof{\ensuremath{#1}}}}}}{#1}}
\title{Fourier decay of equilibrium states on hyperbolic surfaces}
\author{Gaétan Leclerc}
\date{ \vspace{-2em} }
\begin{document}

\maketitle

\begin{abstract}
Let $\Gamma$ be a (convex-)cocompact group of isometries of the hyperbolic space $\mathbb{H}^d$, let $M := \mathbb{H}^d/\Gamma$ be the associated hyperbolic manifold, and consider a real valued potential $F$ on its unit tangent bundle $T^1 M$. Under a natural regularity condition on $F$, we prove that the associated $(\Gamma,F)$-Patterson-Sullivan densities are stationary measures with exponential moment for some random walk on $\Gamma$. As a consequence, when $M$ is a surface, the associated equilibrium state for the geodesic flow on $T^1 M$ exhibit \say{Fourier decay}, in the sense that a large class of oscillatory integrals involving it satisfies power decay. It follows that the non-wandering set of the geodesic flow on convex-cocompact hyperbolic surfaces has positive Fourier dimension, in a sense made precise in the appendix.
\end{abstract}

\section{Introduction}

\subsection{State of the art}

Since the early work of Dolgopyat on the decay of correlation of Anosov flows \cite{Do98}, we know that the rate of mixing of hyperbolic flows (with respect to some \textbf{equilibrium states}) may be linked with the spectral properties of \say{twisted transfer operators}. This idea has been widely used and generalized: see, for example, \cite{St01}, \cite{BV05}, \cite{St11}, and \cite{DV21}, to only name a few related work. In concrete terms, exponential mixing can be reduced to exhibiting enough cancellations in sums of exponentials. It turns out that this (rather complicated) process can be sometimes simplified if one knows that the Fourier transform of those equilibrium states exhibit Fourier decay. This idea has been explored and discussed in Li's work on stationary measures \cite{Li20}, as well as in \cite{MN20}, and more recently in a preprint by Khalil \cite{Kh23}. This connection with the exponential mixing of dynamical systems has sparked recent interest in studying the behavior of the Fourier transform of measures. Historically, this was motivated by understanding sets of unicity for Fourier series \cite{KS64}, which lead us to discover that the Fourier properties of a measure may be used to study the arithmetic properties of its support. This idea is encoded in the notion of \say{Fourier dimension}: see for exemple a recent preprint of Fraser \cite{Fr22} (introducing a new notion of \say{Fourier dimension spectrum}) and the references therein. Let us introduce this notion. \\

The Fourier dimension is better understood if we first recall a well known formula for the Hausdorff dimension. If $E \subset \mathbb{R}^d$ is a compact subset of some euclidean space, a corollary (see for example \cite{Ma15}) of a lemma by Frostmann \cite{Fro35} yields the following identity:
$$ \dim_H E = \sup\left\{ \alpha \in [0,d] \ | \ \exists \mu \in \mathcal{P}(E), \ \int_{\mathbb{R}^d} |\widehat{\mu}(\xi)|^2 |\xi|^{\alpha-d} d\xi < \infty \right\}, $$
where $\dim_H E$ is the Hausdorff \say{fractal} dimension of $E$, $\mathcal{P}(E)$ is the set of all (borel) probability measures supported on $E$, and where $\widehat{\mu} : \mathbb{R}^d \rightarrow \mathbb{C}$, given by
$$ \widehat{\mu}(\xi) := \int_{\mathbb{R}^d} e^{-2i \pi x \cdot \xi} d\mu(x), $$
is the Fourier transform of the measure $\mu \in \mathcal{P}(E)$. The condition on the measure in the supremum can be though as a decay condition \say{on average}. In particular, the inner integral is finite if $\widehat{\mu}(\xi)$ decays like $|\xi|^{-\alpha/2-\varepsilon}$ for large $\xi$. With this in mind, the following notion is quite natural to introduce: we define the Fourier dimension of $E \subset \mathbb{R}^d$ by the formula
$$ \dim_F E := \sup\left\{ \alpha \in [0,d] \ | \ \exists \mu \in \mathcal{P}(E), \exists C \geq 1, \forall \xi \in \mathbb{R}^d \setminus \{0\}, \ |\widehat{\mu}(\xi)| \leq C |\xi|^{-\alpha/2} \right\} .$$
While it is clear that $0 \leq \dim_F E \leq \dim_H E \leq d$, we do not always have equality between the two notions. For example, it is well known that the triadic Cantor set has Hausdorff dimension  $\ln2/\ln3$, but has Fourier dimension $0$. In fact, exhibiting deterministic (fractal) sets with positive Fourier dimension is quite a challenge. One of the earliest examples of deterministic (fractal) set with positive Fourier dimension was discovered by Kaufmann in 1980 \cite{Ka80}, involving sets related to continued fractions. Kaufmann's method was optimised by Queffelec and Ramare in \cite{QR03}, and was more recently generalized by Jordan and Sahlsten \cite{JS16}. A year later, Kaufmann \cite{Ka81} found deterministic examples of (fractal) \emph{Salem sets} in $\mathbb{R}$, that is, sets $E$ satisfying $\dim_H E = \dim_F E$. This example is related to diophantine approximations. The construction was generalized in $\mathbb{R}^2$ in \cite{Ha17}, and then in $\mathbb{R}^d$ in \cite{FH23}. Let us mention that there is a whole bibliography on random constructions of Salem sets: the interested reader may look at the references in \cite{FH23}. Returning to the study of sets with positive Fourier dimension and of sets of unicity, let us also mention that a there has been a lot of interest for Cantor sets appearing from linear IFS. Related work includes \cite{LS19a}, \cite{LS19b}, \cite{So19}, \cite{Br19}, \cite{VY20}, and \cite{Ra21}.  \\

In 2017, Bourgain and Dyalov \cite{BD17} introduced a new method to prove positivity of the Fourier dimension for some sets. The paper takes place in a dynamical context. If we fix some Schottky group $\Gamma < \text{PSL}(2,\mathbb{R})$, then $\Gamma$ acts naturally on $\mathbb{R}$. One can prove that there exists a cantor set  $\Lambda_\Gamma \subset \mathbb{R}$, called a limit set, which is invariant by $\Gamma$. On this limit set, there is a family of natural probability measures associated with the dynamics that are called Patterson-Sullivan measures. Using results from additive combinatorics, more specifically a \say{sum-product phenomenon}, Bourgain and Dyatlov managed to prove power decay for the Fourier transform of those probability measures. In particular, the limit set $\Lambda_\Gamma \subset \mathbb{R}$ has positive Fourier dimension. An essential feature of $\Gamma$ was the \emph{nonlinearity} of the dynamics. \\

The method introduced in this paper inspired numerous generalizations, beginning with a paper of Li, Naud and Pan \cite{LNP19} proving power decay for Patterson-Sullivan measures over (Zariski dense) Fuschian Schottky groups $\Gamma < \text{PSL}(2,\mathbb{C})$. In this paper, Li proves that such measures may be seen as \textbf{stationary measures with finite exponential moment}, which allows them to use several results from the topic of random walks on groups. From this, they obtain positivity of the Fourier dimension of the associated limit set $\Lambda_\Gamma \subset \mathbb{C}$. \\

From there, at least two different directions exists for generalization. The first one is to notice that Patterson-Sullivan measures are equilibrium states for some hyperbolic dynamical system. A natural generalization is then given by Sahlsten and Steven in \cite{SS20}: for one dimensional and \say{totally nonlinear} IFS, one can show that any equilibrium state exhibit Fourier decay. In particular, Sahslten and Steven obtain positive Fourier dimension for a large class of \say{nonlinear} Cantor sets. This paper use the method introduced by Bourgain-Dyatlov, and is also inspired by previous techniques appearing in \cite{JS16}. See also \cite{ARW20} for some related work on nonlinear IFS and pointwise normality. Some complementary remarks on the work of Sahlsten and Steven may be found in \cite{Le22}. Past the one-dimensionnal setting, it was proved by the author in \cite{Le21} that the same results hold true in the context of hyperbolic Julia sets in the complex plane. Some decay results are also true, in the unstable direction, for equilibrium states of (sufficiently bunched) nonlinear solenoids \cite{Le23}. \\

A second natural direction to look at is for result concerning stationary measures with exponential moment for random walks on groups. Li proved in \cite{Li18} and \cite{Li20} several Fourier decay results in the context of random walks over $SL_n(\mathbb{R})$ (a crucial property of this group in the proofs is its splitness). Past the split setting, further results seems difficult to achieve. 

\subsection{Our setting of interest}

In this paper, we are interested in studying the Fourier properties of equilibrium states for the geodesic flow on convex-cocompact surfaces of constant negative curvature. More details on our setting will be explained during the paper, but let's quickly introduce the main objects at play. A usefull reference is \cite{PPS15}. \\

We work on hyperbolic manifolds, that is, a riemannian manifold $M$ that may be written as $M = \mathbb{H}^d/\Gamma$, where $\mathbb{H}^d$ is the hyperbolic space of dimension $d$, and where $\Gamma$ is a (non-elementary, discrete, without torsion, orientation preserving) group of isometries of $\mathbb{H}^d$. The geodesic flow $\phi = (\phi_t)_{t \in \mathbb{R}}$ acts on the unit tangent bundle of $M$, denoted by $T^1 M$. We say that a point $v \in T^1 M$ is wandering for the flow if there exists an open neighborhood $U \subset T^1 M$ of $v$, and a positive number $T>0$ such that:
$$ \forall t >T, \ \phi_t(U) \cap U = \emptyset .$$
The set of non-wandering points for $\phi$, denoted by $NW(\phi) \subset T^1 M$, is typically \say{fractal} and is invariant by the geodesic flow. We will work under the hypothesis that the group $\Gamma$ is convex-cocompact, which exactly means that $\text{NW}(\phi)$ is supposed compact. In particular, \emph{the case where $M$ is itself compact is authorized}. Under this condition, the flow $\phi$ restricted to $NW(\phi)$ is Axiom A. \\

In this context, for any choice of Hölder regular potential $F:T^1 M \rightarrow \mathbb{R}$, and for any probability measure $m \in \mathcal{P}(T^1 M)$ (the set of borel probability measures on $T^1 M$) invariant by the geodesic flow, one can consider the \emph{metric pressure} associated to $m$, defined by:
$$ P_{\Gamma,F}(m) = h_m(\phi) + \int_{\text{NW}(\phi)} F dm ,$$
where $h_m(\phi)$ denotes the entropy of the time-1 map of the geodesic flow with respect to the measure $m$. Notice that any probability measure invariant by the geodesic flow must have support included in the non-wandering set of $\phi$.
The \emph{topological pressure} is then defined by
$$ P(\Gamma,F) := \sup_m P_{\Gamma,F}(m) ,$$
where the sup is taken over all the $\phi$-invariant probability measures $m$. Those quantities generalize the variationnal principle for the topological and metric entropy (that we recover when $F=0$). 
It is well known that this supremum is, in fact, a maximum: see for example \cite{BR75} or $\cite{PPS15}$.

\begin{theorem}
Let $\Gamma$ be convex-cocompact, $M := \mathbb{H}^d/\Gamma$, and $F:T^1 M \rightarrow \mathbb{R}$ be a Hölder regular potential. Then there exists a unique probability measure $m_F$ invariant by $\phi$ such that $P_{\Gamma,F}(m_F) = P({\Gamma,F})$. This measure is called the equilibrium state associated to $F$ and its support is the non-wandering set of the geodesic flow. When $F=0$, $m_F$ is the measure of maximal entropy.
\end{theorem}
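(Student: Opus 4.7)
The plan is to reduce the problem to the classical thermodynamic formalism on subshifts of finite type, via symbolic dynamics, which is the standard Bowen--Ruelle strategy.

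The first step is to use the convex-cocompactness hypothesis: it ensures that $NW(\phi)$ is a compact hyperbolic (Axiom A) basic set for the geodesic flow, on which $\phi$ is topologically transitive (indeed topologically mixing, because of the non-elementary assumption on $\Gamma$, which rules out the \emph{constant roof} obstruction). One then invokes Bowen's construction of a Markov partition for topologically mixing Axiom A flows, yielding a finite-to-one semi-conjugacy between $\phi|_{NW(\phi)}$ and a suspension flow $\Sigma_r$ built over a topologically mixing two-sided subshift of finite type $(\Sigma,\sigma)$ by a Hölder roof function $r:\Sigma\to(0,\infty)$. The Hölder potential $F$ lifts through the coding to a Hölder function $\widetilde F$ on $\Sigma_r$, and we can form its integral along orbits $\widetilde F^r(x) := \int_0^{r(x)} \widetilde F(x,s)\,ds$, which is Hölder continuous on $\Sigma$.

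The second step is to apply Ruelle's Perron--Frobenius theorem (equivalently, Bowen's theorem on Gibbs states) on the base $(\Sigma,\sigma)$ to the Hölder potential $\widetilde F^r - P(\Gamma,F)\, r$, where $P(\Gamma,F)$ is chosen exactly so that the topological pressure of this potential on the shift vanishes. This produces a unique $\sigma$-invariant Gibbs probability measure $\nu$ on $\Sigma$ maximizing the corresponding variational problem. Lifting $\nu$ to the suspension via the normalised product $\frac{1}{\int r\, d\nu}(\nu\otimes dt)|_{\Sigma_r}$ yields a flow-invariant probability measure, and pushing it through the Markov coding gives a candidate $m_F$ on $NW(\phi)$. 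Abramov's formula for entropy under suspension together with the identity $\int \widetilde F\, d(\nu\otimes dt) = \int \widetilde F^r d\nu$ then translates the variational principle on the base into the variational principle on $T^1M$, so that $m_F$ achieves $P_{\Gamma,F}(m_F)=P(\Gamma,F)$.

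The third step is uniqueness and support. Any other equilibrium state would, after disintegration along the suspension direction and pullback to $\Sigma$, produce a shift-invariant equilibrium state for the base potential; uniqueness of the Gibbs measure $\nu$ then forces equality with $m_F$, modulo the null set on which the semi-conjugacy fails to be injective (which is standard to handle). Full support on $NW(\phi)$ follows from the fact that Gibbs measures of Hölder potentials on topologically mixing SFTs have full support, combined with density of the image of the coding in $NW(\phi)$.

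The main obstacle, as usual in this circle of ideas, is not the symbolic part but the construction and careful use of the Markov partition for a \emph{flow} on a non-compact ambient manifold: one must ensure that the Poincaré sections can be chosen so that their boundaries are $\phi$-negligible and that the roof function really is Hölder. In the convex-cocompact setting this is by now routine (cf.\ \cite{PPS15}), which is why the author states the theorem as known and merely cites \cite{BR75} and \cite{PPS15}.
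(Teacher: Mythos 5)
The paper does not prove this theorem itself; it states it as known and cites \cite{BR75} and \cite{PPS15}. Your symbolic-dynamics argument (Markov partition, suspension flow, Ruelle--Perron--Frobenius on the base shift, Abramov's formula) is precisely the Bowen--Ruelle route of \cite{BR75}, suitably restricted to the compact basic set $NW(\phi)$, and the sketch is sound: in particular you are right that non-elementarity of $\Gamma$ gives topological mixing of the geodesic flow on $NW(\phi)$ in constant negative curvature (non-arithmeticity of the length spectrum), which is exactly what is needed to invoke Bowen's Markov coding for flows.

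It is worth noting, though, that the reference the paper actually leans on for its subsequent work, \cite{PPS15}, proves this theorem by an entirely different, \emph{coding-free} method: equilibrium states are built directly as Gibbs measures from $(\Gamma,F)$-Patterson--Sullivan densities via the Hopf parametrisation and the Gibbs cocycle (this is what produces the product formula for $\tilde m_F$ quoted as Theorem~4.1 in the paper). The Patterson--Sullivan construction avoids Markov partitions altogether and extends to variable curvature and geometrically finite groups where symbolic coding is delicate or unavailable; it also yields the explicit product structure of $m_F$ that the rest of the paper exploits. Your Bowen--Ruelle route is more elementary and self-contained if one only wants existence, uniqueness, and full support, but it does not by itself deliver the Patterson--Sullivan description that the paper needs in Sections~3--4, so within this paper the \cite{PPS15} approach is the operative one.

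One small caution in your write-up: when you say the semi-conjugacy is finite-to-one and handle the ``null set where it fails to be injective,'' you should make explicit that this uses the standard Bowen argument that the boundary of the Markov sections has zero measure for every Gibbs state of a Hölder potential (not just the chosen one); otherwise the pullback of a putative second equilibrium state to the shift is not well-defined. This is routine but it is the step where the finite-to-one coding actually gets used, so it deserves a sentence.
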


Theorem 6.1 in \cite{PPS15} also gives us a description of equilibrium states. To explain it, recall that the \emph{Hopf coordinates} allows us to identify $T^1 \mathbb{H}^d$ with $\partial_\infty \mathbb{H}^{d} \times \partial_\infty \mathbb{H}^{d} \times \mathbb{R}$, where $\partial_\infty \mathbb{H}^d $ denotes the \emph{ideal boundary} of the hyperbolic space (diffeomorphic to a sphere in our context). The measure $m_F$ lift into a $\Gamma$-invariant measure $\tilde{m}_F$ on $T^1 \mathbb{H}^d$, which can then be studied in these coordinates. The interesting remark is that $\tilde{m}_F$ may be seen as a product measure, involving what we call $(\Gamma,F)$-Patterson-Sullivan densities, which are generalization of the usual Patterson-Sullivan probability measures. More precisely, there exists $\mu_F$ and $\mu_{F}^\iota$, two Patterson-Sullivan densities supported on the ideal boundary $\partial_\infty \mathbb{H}^d$, such that one may write (in these Hopf coordinates):

$$ d\tilde{m}_F(\xi,\eta,t) = \frac{d\mu_F(\xi) \otimes d\mu_{F}^\iota(\eta) \otimes dt}{D_{F}(\xi,\eta)^2} ,$$
where $D_F$ is the \say{potential gap} (or gap map), that we will define later. More details on Patterson-Sullivan densities can be found in section 2 (which will be devoted to recalling various preliminary results). 
Since the Hopf coordinates are smooth on $\mathbb{H}^d$, we see that one may reduce Fourier decay for $m_F$ to proving Fourier decay for Patterson-Sullivan densities. This reduction is the content of section 4. Then, to prove Fourier decay for those measures, several possibilities exists. With our current techniques, this may only be achieved when $d=2$, so that Patterson-Sullivan densities are supported on the circle. \\

The first possibility would be to use the fact that, in this low dimensionnal context, there exists a coding of the dynamics of the group $\Gamma$ on the ideal boundary: see for exemple \cite{BS79} or \cite{AF91}. Using these, one should be able to get Fourier decay for Patterson-Sullivan densities by adapting the proof of Bougain and Dyatlov in \cite{BD17}. The second possibility would be to adapt the argument found in Li's appendix \cite{LNP19} to prove that Patterson-Sullivan densities are actually stationary measures with exponential moment (for a random walk on $\Gamma$). Since in dimension 2, isometries of $\mathbb{H}^2$ may be seen as elements of $\text{SL}_2(\mathbb{R})$, one could then apply Li's work \cite{Li20} to get Fourier decay. This is the strategy that we choose to follow in section 3. Finally, let us enhance the fact that we are only able to work under a regularity condition (R) (see definition 2.13) that ensure Hölder regularity for our measures of interest. We now state our main results.

\begin{theorem}[Compare Theorem 3.2]
Let $\Gamma$ be a convex-cocompact group of isometries of $\mathbb{H}^d$, and let $F : T^1(\mathbb{H}^d/\Gamma) \rightarrow \mathbb{R}$ be a Hölder potential satisfying (R). Let $\mu \in \mathcal{P}(\partial_\infty \mathbb{H}^d)$ be a $(\Gamma,F)$ Patterson-Sullivan density. Then there exists $\nu \in \mathcal{P}(\Gamma)$ with exponential moment such that $\mu$ is $\nu$-stationary and such that the support of $\nu$ generates $\Gamma$.
\end{theorem}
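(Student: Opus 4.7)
The plan is to adapt the construction given in the appendix of \cite{LNP19}, which produced such a measure $\nu$ in the case $F = 0$, to accommodate the Hölder potential $F$ satisfying hypothesis (R). Two tools control everything: the quasi-invariance of the $(\Gamma, F)$-Patterson-Sullivan density, giving
$$\frac{d(g_* \mu)}{d\mu}(\xi) = e^{-C_{F-\delta}(g^{-1},\,\xi)}$$
for the Gibbs cocycle $C_{F-\delta}$ with $\delta = P(\Gamma, F)$; and the Gibbs-type shadow lemma, providing two-sided bounds $\mu(\mathcal{O}_R(go)) \asymp e^{-\delta\, d(o,go) + \int_o^{go} \tilde F}$ for the $\mu$-measure of shadows of orbit balls once $R$ is large enough. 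Hypothesis (R) is used to ensure uniform Hölder control of the cocycle on the limit set, so that these bounds remain sharp after restriction to sub-shadows.

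I would then construct $\nu$ from a shadow partition of the limit set. Fixing a base point $o \in \mathbb{H}^d$ in the convex hull of $\Lambda_\Gamma$ and large parameters $R, T$, a deterministic rule — for instance, assigning to $\xi \in \Lambda_\Gamma$ the orbit element $g(\xi)$ selected by a first-visit rule among the orbit points $go$ with $d(o,go) \sim T$ whose ball $B(go, R)$ is shadowed by the geodesic from $o$ to $\xi$ — yields a measurable partition $\Lambda_\Gamma = \bigsqcup_{g \in \Gamma} P_g$ with $P_g \subset \mathcal{O}_R(go)$. Setting $\nu(g) := \mu(P_g)$ defines a probability measure on $\Gamma$. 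Exponential moment follows by exploiting that the cells $P_g$ have diameters decaying exponentially with $d(o, go)$ and that the Gibbs bounds tie $\nu(g)$ to $e^{-\delta\, d(o,go) + \int_o^{go}\tilde F}$, which beats the exponential orbital growth for $\varepsilon > 0$ small enough. Generation of $\Gamma$ is guaranteed by taking the partition coarse enough that the shadows used cover all directions of $\Lambda_\Gamma$; if needed, one convolves $\nu$ with a Dirac supported on a finite generating set, without disturbing the other properties.

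The main obstacle is the verification of \emph{exact} stationarity $\mu = \sum_g \nu(g)\, g_* \mu$. A direct Gibbs computation only gives approximate stationarity, because the cocycle $C_{F-\delta}(g^{-1},\cdot)$ is not exactly constant on each $P_g$ and the shadow lemma has multiplicative slack. Two remedies suggest themselves: (a) reformulate symbolically by showing that the map $T : \xi \mapsto g(\xi)^{-1}\xi$ is a measurable factor of the boundary dynamics preserving $\mu$, whereupon $\nu = g_* \mu$ is automatically stationary by $T$-invariance; or (b) set up a Schauder fixed-point argument on the compact convex set of probability measures on $\Gamma$ with a prescribed exponential-moment bound, using the approximate-stationarity operator as a continuous self-map and extracting a true fixed point in the limit. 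In either route the delicate point is to extract enough uniformity from (R) to make the approximations close up tightly and to pass the support and moment conditions to the limit.
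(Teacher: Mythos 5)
You correctly identify the central difficulty: a partition of $\Lambda_\Gamma$ into cells $P_g$ subordinate to shadows, with $\nu(g):=\mu(P_g)$, only gives the Gibbs asymptotics $\nu(g)\asymp e^{\int_o^{go}(\tilde F-\delta)}$ and \emph{approximate} stationarity, because the Radon--Nikodym cocycle $e^{C_{F-\delta,\xi}(o,go)}$ is not constant on each $P_g$. Unfortunately, neither of your two proposed remedies closes this gap. Remedy (a) rests on a false implication: if $T(\xi):=g(\xi)^{-1}\xi$ preserves $\mu$, then $\int\phi\,d\mu=\sum_g\int_{P_g}\phi\circ g^{-1}\,d\mu$, whereas $\nu$-stationarity would require $\int\phi\,d\mu=\sum_g\mu(P_g)\int\phi\circ g^{-1}\,d\mu$; these coincide only if $\mu_{|P_g}=\mu(P_g)\,\mu$, which is never true. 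So $T$-invariance of $\mu$ is simply not equivalent to $\nu$-stationarity, and one is back to needing the cocycle to be exactly constant on cells. Remedy (b) is not a proof plan: there is no natural continuous self-map of the compact convex set of measures on $\Gamma$ with a prescribed exponential-moment bound whose fixed points would be the desired $\nu$, and the "approximate-stationarity operator'' you allude to is not specified.

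The paper takes a genuinely different route that sidesteps partitions entirely. Writing $f_\gamma(\xi):=e^{C_{F,\xi}(o,\gamma o)}$ for the Radon--Nikodym derivative $d(\gamma_*\mu_o)/d\mu_o$, stationarity is reformulated as the exact functional identity $\sum_\gamma\nu(\gamma)f_\gamma\equiv 1$ on $\Lambda_\Gamma$; the task becomes to decompose the constant function $1$ as a nonnegative series in the $f_\gamma$. Each $f_\gamma$ is shown to behave like an approximation of unity of width $r_\gamma=e^{-d(o,\gamma o)}$ centered at a limit-set point $\eta_\gamma$, and an approximation operator $P_nR:=\sum_{\gamma\in S_n}R(\eta_\gamma)\,r_\gamma^F\,f_\gamma$ is introduced. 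The key technical input (the "approximation lemma''), proved from the shadow lemma, the Hölder regularity of the Gibbs cocycle, and contraction estimates on the boundary action, is that $A^{-1}R\le P_{n+1}R\le AR$ uniformly for functions $R$ that are Hölder at scale $r_{n+1}$ with controlled global variation. One then runs a greedy iteration $R_0:=1$, $R_{n+1}:=R_n-\frac{\beta}{A}P_{n+1}R_n$, verifies by induction that each $R_n$ remains admissible, obtains $R_n\le(1-\beta/A^2)^n\to 0$, and reads off $1=\frac{\beta}{A}\sum_nP_n(R_{n-1})$, i.e.\ $\nu(\gamma):=\frac{\beta}{A}R_{n-1}(\eta_\gamma)r_\gamma^F$ for $\gamma\in S_n$. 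Exact stationarity holds by construction, with no approximation error to absorb; the exponential moment and generation of $\Gamma$ follow as in your sketch. The place where hypothesis (R) enters is precisely the contraction and regularity estimates feeding the approximation lemma, not a tightening of Gibbs constants on a shadow partition.
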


Theorem 1.2 is our main technical result. The strategy is inspired by the appendix of \cite{LNP19}, but in our setting, some additionnal difficulties appear since the potential may be non-zero. For example, the proof of Lemma A.12 in \cite{LNP19} fails to work in our context. Our main idea to replace this lemma is to do a carefull study of the action of $\Gamma$ on the sphere at infinity: we will be particularly interested in understanding its contractions properties. This is the content of section 2. The proof of Theorem 1.2 is in section 3. Once this main technical result is proved, one can directly use the work of Li \cite{Li20} and get:

\begin{corollary}[\cite{Li20}, Theorem 1.5]

Let $\Gamma$ be a convex-cocompact group of isometries of $\mathbb{H}^2$, and let $F : T^1(\mathbb{H}^2/\Gamma) \rightarrow \mathbb{R}$ be a Hölder potential satisfying (R). Let $\mu \in \mathcal{P}(\Lambda_\Gamma)$ be a $(\Gamma,F)$ Patterson-Sullivan density. There exists $\varepsilon>0$ such that the following hold. Let $R \geq 1$ and let $\chi : \partial_\infty \mathbb{H}^2 \simeq \mathbb{S}^1 \rightarrow \mathbb{R}$ be a $\alpha$-Hölder map supported on some compact $K$. Then there exists $C \geq 1$ such that, for any $C^2$ function $\varphi : \partial_\infty \mathbb{H}^2 \rightarrow \mathbb{R}$ such that $\|\varphi\|_{C^2} + (\inf_K |\varphi'|)^{-1} \leq R$, we have:

$$ \forall s \in \mathbb{R}^*, \ \left|\int_{\partial_\infty \mathbb{H}^2} e^{i s \varphi} \chi d\mu \right| \leq \frac{C}{ |s|^{\varepsilon}} .$$
\end{corollary}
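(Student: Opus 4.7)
The plan is to deduce this corollary as a direct application of \cite[Theorem 1.5]{Li20}, with Theorem 1.2 furnishing exactly the hypotheses of that result.

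First I would translate the setting. In dimension two, $\text{Isom}^+(\mathbb{H}^2) = \text{PSL}_2(\mathbb{R})$, and the boundary $\partial_\infty \mathbb{H}^2$ identifies diffeomorphically with $\mathbb{P}^1(\mathbb{R}) \simeq \mathbb{S}^1$, the $\Gamma$-action becoming the standard projective action. Choose a measurable lift $\widetilde{\Gamma} \subset \text{SL}_2(\mathbb{R})$ of $\Gamma$; because $\Gamma$ is discrete, convex-cocompact, and non-elementary, $\widetilde{\Gamma}$ is Zariski dense in $\text{SL}_2(\mathbb{R})$ (classical for non-elementary Fuchsian groups, since any two hyperbolic elements with distinct axes generate a free, hence Zariski dense, subgroup).

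Next I would invoke Theorem 1.2 to produce $\nu \in \mathcal{P}(\Gamma)$ with finite exponential moment whose support generates $\Gamma$, and for which $\mu$ is $\nu$-stationary. Pushing $\nu$ through the lift yields a probability measure on $\text{SL}_2(\mathbb{R})$ whose support generates $\widetilde{\Gamma}$, hence is Zariski dense. The exponential moment is preserved, since on $\widetilde{\Gamma}$ the word length, the hyperbolic translation length of the projection, and $\log\|g\|$ are mutually comparable up to bounded additive error (as $\Gamma$ is convex-cocompact, the Cayley graph quasi-isometrically embeds into $\mathbb{H}^2$ via the orbit map).

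Finally I would apply \cite[Theorem 1.5]{Li20}, which takes as input a probability measure on $\text{SL}_2(\mathbb{R})$ with finite exponential moment whose support generates a Zariski dense subgroup, together with a $\nu$-stationary probability measure on $\mathbb{P}^1(\mathbb{R})$, and produces the claimed bound on oscillatory integrals against $C^2$ phases with non-vanishing derivative on the support of the Hölder cutoff, with precisely the polynomial dependence on $R$ and $\|\chi\|_{C^\alpha}$ encoded in the constant $C$. Under the above Zariski density and exponential moment hypotheses, Furstenberg's theorem ensures uniqueness of the $\nu$-stationary probability measure on $\mathbb{P}^1(\mathbb{R})$, so our Patterson-Sullivan $\mu$ coincides with the measure to which Li's theorem is applied, and the conclusion transfers verbatim. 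The real work lies entirely in Theorem 1.2; the main (mild) obstacle in the deduction itself is simply verifying that the derivative condition $(\inf_K|\varphi'|)^{-1} \leq R$ in our statement aligns with the non-degeneracy hypothesis of Li's formulation, which it does by inspection.
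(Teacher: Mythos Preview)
Your proposal is correct and matches the paper's approach exactly: the paper itself disposes of this corollary in one sentence at the start of Section~4, writing that ``Corollary 1.3 follows directly from \cite{Li20} (since, when $d=2$, being Zariski dense is equivalent to being non-elementary)''. Your added details on the lift to $\mathrm{SL}_2(\mathbb{R})$ and the exponential-moment comparison are reasonable elaborations; the appeal to Furstenberg uniqueness is harmless but unnecessary, since Theorem~1.2 already certifies that $\mu$ itself is $\nu$-stationary.
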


Using the previous Corollary 1.3 and using the Hopf coordinates, we can conclude Fourier decay for equilibrium states on convex-cocompact hyperbolic surfaces. The proof is done in section 4.

\begin{theorem}[Compare Theorem 4.5]
Let $\Gamma$ be a convex-cocompact group of isometries of $\mathbb{H}^2$, and let $F : T^1(\mathbb{H}^2/\Gamma) \rightarrow \mathbb{R}$ be a Hölder potential satisfying (R). Let $m_F$ be the associated equilibrium state. There exists $\varepsilon>0$ such that the following holds. Let $\chi : T^1\mathbb{H}^2 \rightarrow \mathbb{R}$ be a Hölder map supported on a compact neighborhood of some point $v_o \in T^1 \mathbb{H}^2$, and let $\varphi : T^1 \mathbb{H}^2 \rightarrow \mathbb{R}^3$ be a $C^2$ local chart containing the support of $\chi$. There exists $C\geq 1$ such that:

$$ \forall \zeta \in \mathbb{R}^3\setminus \{0\}, \ \left|\int_{NW(\phi)} e^{i \zeta \cdot \varphi(v)} \chi(v) d m_F(v) \right| \leq \frac{C}{|\zeta|^{\varepsilon}},$$
where $\zeta \cdot \zeta'$ and $|\zeta|$ denotes the euclidean scalar product and the euclidean norm on $\mathbb{R}^3$. In other word, the pushforward measure $\varphi_*(\chi d m_F) \in \mathcal{P}(\mathbb{R}^3)$ exhibit power Fourier decay.
\end{theorem}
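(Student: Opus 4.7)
The plan is to reduce the integral
$$ I(\zeta) := \int_{NW(\phi)} e^{i\zeta \cdot \varphi(v)} \chi(v) \, dm_F(v) $$
to an oscillatory integral in Hopf coordinates and then apply Corollary~1.3 directionally. First, since $\chi$ is compactly supported around $v_o \in T^1\mathbb{H}^2$ and the covering map $T^1\mathbb{H}^2 \to T^1 M$ is locally a diffeomorphism, I would lift the integral to $T^1\mathbb{H}^2$ and integrate against $\tilde m_F$, possibly using a partition of unity on the support of $\chi$ to ensure injectivity. Writing Hopf coordinates $(\xi, \eta, t)$ and setting $\Phi := \varphi \circ h^{-1}$, where $h$ is the smooth Hopf chart, the product decomposition of $\tilde m_F$ recalled in the introduction turns this into
$$ I(\zeta) = \int_K e^{i\zeta \cdot \Phi(\xi,\eta,t)} \chi(\xi,\eta,t) \frac{d\mu_F(\xi) \, d\mu_F^\iota(\eta) \, dt}{D_F(\xi,\eta)^2} $$
for some compact $K \subset \partial_\infty \mathbb{H}^2 \times \partial_\infty \mathbb{H}^2 \times \mathbb{R}$. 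Since $\Phi$ is a $C^2$ diffeomorphism onto its image, its Jacobian has uniformly bounded inverse on $K$, so for every unit vector $\hat\zeta \in S^2$ the gradient $(D\Phi)^\top \hat\zeta$ has Euclidean norm bounded below by some $c > 0$ uniformly on $K \times S^2$.

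Next I would use this uniform lower bound to split $I(\zeta) = I_\xi(\zeta) + I_\eta(\zeta) + I_t(\zeta)$ via a partition of unity on $K \times S^2$ (after writing $\zeta = |\zeta|\hat\zeta$), arranged so that in each piece a specific partial derivative $\partial_i(\hat\zeta \cdot \Phi)$, $i \in \{\xi, \eta, t\}$, is bounded below by $c/\sqrt{3}$. For $I_\xi$ (and symmetrically $I_\eta$), I would freeze the other two variables and apply Corollary~1.3 to the inner integral against $\mu_F$ (respectively $\mu_F^\iota$, which is a Patterson--Sullivan density for the reflected potential $F \circ \iota$ and hence obeys the corollary as well): the phase $\tilde\varphi(\xi) := \hat\zeta \cdot \Phi(\xi, \eta, t)$ is $C^2$ and the amplitude $\xi \mapsto \chi(\xi, \eta, t)/D_F(\xi, \eta)^2$ is Hölder with compact support, and both satisfy the hypotheses uniformly with $s := |\zeta|$ and $R$ depending only on $\Phi$ and $K$. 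This gives an inner bound $\lesssim |\zeta|^{-\varepsilon}$, which survives integration against the remaining finite measure. For $I_t$, the inner integration is against Lebesgue with $C^2$ phase of derivative $\gtrsim |\zeta|$ and an $\alpha$-Hölder compactly supported amplitude; a classical mollification and integration-by-parts argument yields an inner decay $\lesssim |\zeta|^{-\alpha/(1+\alpha)}$ that likewise survives integration against $d\mu_F \otimes d\mu_F^\iota / D_F^2$. Setting $\varepsilon' := \min(\varepsilon, \alpha/(1+\alpha))$ then gives the global bound.

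The main obstacle I anticipate is ensuring uniformity of constants across the directional decomposition: the hypotheses of Corollary~1.3 must hold with a bound $R$ independent of both the direction $\hat\zeta$ and the frozen variables $(\eta, t)$, which requires a careful compactness argument on $K \times S^2$ and a partition of unity subordinate to it. A related technical point is that the Hölder regularity of $\chi / D_F^2$ in a single coordinate (uniformly in the others) has to be extracted from joint Hölder continuity together with the lower bound on $D_F$ guaranteed by condition (R). Neither difficulty is conceptual; once dispatched, Theorem~1.4 reduces cleanly to the Fourier decay of the Patterson--Sullivan marginals.
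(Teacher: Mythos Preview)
Your proposal is correct and follows essentially the same route as the paper: lift to $T^1\mathbb{H}^2$, pass to Hopf coordinates and the product form of $\tilde m_F$, decompose via a partition of unity according to which partial derivative of the phase is bounded below, then invoke Corollary~1.3 in the $v^\pm$ directions (using that $F\circ\iota$ also satisfies (R)) and a mollification plus integration-by-parts in the $t$ direction. The only organizational difference is that the paper first proves the decay for a single real-valued phase $\psi$ with $d\psi\neq 0$ on $K$ (Theorem~4.5, via Lemmas~4.2--4.4) and then specializes to $\psi=\hat\zeta\cdot\varphi$ (Remark~4.6), whereas you bundle the direction $\hat\zeta$ into the partition of unity on $K\times S^2$ from the start; your packaging makes the uniformity in $\hat\zeta$ more explicit. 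A cosmetic point: in the $t$ direction the paper exploits the H\"older regularity also when bounding $\partial_t\tilde\chi_\varepsilon$, which yields $|\zeta|^{-\alpha}$ rather than your $|\zeta|^{-\alpha/(1+\alpha)}$, but either exponent suffices for the statement.
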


\begin{remark}
We will see in section 4 that the argument to prove Theorem 1.4 from Corollary 1.3 is fairly general. In particular, if one is able to prove Fourier decay for $(\Gamma,F)$-Patterson-Sullivan densities in some higher dimensionnal context, this would prove Fourier decay for equilibrium states in higher dimensions. For example, \cite{LNP19} precisely proves Fourier decay for Patterson-Sullivan densities with the potential $F=0$ when $\Gamma < \text{PSL}(2,\mathbb{C})$ is a Zariski-dense Kleinian Schottky group. This yields power decay for the measure of maximal entropy on $M := \mathbb{H}^3/\Gamma$ in this context. 
\end{remark}

\begin{remark}
With our result in mind, it is natural to try to give some sense to the sentence \say{$\dim_F NW(\phi) >0 $}. The problem is that the notion of Fourier dimension is not well defined on manifolds. In the appendix, we suggest some natural notions of Fourier dimensions for sets living in a manifold, in particular a notion of \emph{lower Fourier dimension}, that measure \say{persistence of the positivity of the Fourier dimension under deformations}. The sentence is then made rigorous in Remark A.7 and Example A.23.
\end{remark}

\subsection{Acknowledgments}
I would like to thank my PhD advisor, Frederic Naud, for pointing out to me the existing bibliography on Patterson-Sullivan densities and for encouraging me to work in the context of hyperbolic surfaces. I would also like to thank Jialun Li for explaining to me his work on Fourier decay for stationary measures with exponential moment in the context of split groups.  Finally, I would like to thank Paul Laubie for some very stimulating discussions. 
\section{Preliminaries}

\subsection{Moebius transformations preserving the unit ball}

In this first paragraph we recall well known properties of Moebius transformations. Usefull references for the study of such maps are \cite{Be83}, \cite{Ra06} and \cite{BP92}. The group of all Moebius transformations of $\mathbb{R}^d \cup \{\infty\}$ is the group generated by the inversion of spheres and by the reflexions. This group contains dilations and rotations. Denote by $\text{Mob}(B^b)$ the group of all the Moebius transformations $\gamma$ such that $\gamma$ preserves the orientation of $\mathbb{R}^d$, and such that $\gamma(B^d)=B^d$, where $B^d$ denotes the open unit ball in $\mathbb{R}^d$. These maps also acts on the unit sphere $\mathbb{S}^{d-1}$. These transformations can be put in a \say{normal form} as follows.

\begin{lemma}[\cite{Ra06}, page 124]
Define, for $b \in B^d$, the associated \say{hyperbolic translation} by:
$$ \tau_b(x) = \frac{(1-|b|^2) x + (|x|^2 + 2 x \cdot b + 1) b}{|b|^2 |x|^2 + 2 x \cdot b + 1} .$$
Then $\tau_b \in \text{Mob}(B^d)$. Moreover, for every $\gamma \in \text{Mob}(B^d)$, $\tau_{\gamma(0)}^{-1} \gamma  \in SO(d,\mathbb{R})$. 
\end{lemma}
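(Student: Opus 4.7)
The plan is to verify the two statements in sequence. For the first, I would check that $\tau_b$ is well defined, preserves the unit ball, and is Möbius. Well-definedness and $\tau_b(0)=b$ are immediate from the formula. To see that $\tau_b$ preserves the unit sphere, I would compute, using the shorthand $s := |b|^2$ and $t := x \cdot b$, that for $|x|=1$ the squared numerator expands as
$$(1-s)^2 + 4(1-s)(1+t)t + 4(1+t)^2 s = (1-s)^2 + 4(1+t)(t+s) = (1+s+2t)^2,$$
which equals the squared denominator. Thus $|\tau_b(x)|=|x|=1$ for $x \in \mathbb{S}^{d-1}$. Combined with $\tau_b(0) = b \in B^d$ and continuity, this forces $\tau_b(B^d) \subset B^d$.

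To show $\tau_b$ is Möbius (and orientation-preserving), I would exhibit it explicitly as a composition of two elementary Möbius generators: the inversion $\sigma_b$ in the sphere orthogonal to $\mathbb{S}^{d-1}$ centered at the inverse point $b/|b|^2$ with radius $\sqrt{1/|b|^2 - 1}$ (which sends $b \mapsto 0$ and preserves $\mathbb{S}^{d-1}$), followed by the reflection in the hyperplane through $0$ orthogonal to $b$. A direct verification shows the composition agrees with $\tau_b$ on enough points to force equality (Möbius maps being determined by their action on finitely many generic points), or alternatively one simply cites Ratcliffe as the paper does. The orientation-preserving character is then clear since the composition of two orientation-reversing Möbius transformations is orientation preserving, and $\tau_b \in \text{Mob}(B^d)$ follows.

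For the second statement, set $\rho := \tau_{\gamma(0)}^{-1} \gamma$. Since $\text{Mob}(B^d)$ is a group and both factors belong to it, $\rho \in \text{Mob}(B^d)$, and by construction $\rho(0) = \tau_{\gamma(0)}^{-1}(\gamma(0)) = 0$. The entire content of the lemma reduces to the rigidity claim: any $\rho \in \text{Mob}(B^d)$ fixing the origin lies in $SO(d,\mathbb{R})$. I would argue this as follows. Interpreting $\text{Mob}(B^d)$ as the orientation-preserving isometry group of the Poincaré ball model of $\mathbb{H}^d$, the map $\rho$ is a hyperbolic isometry fixing the center of the model. Since the hyperbolic metric at $0$ is a positive multiple of the Euclidean metric, the differential $d\rho_0$ lies in $O(d,\mathbb{R})$; and because a Möbius transformation is determined by its $1$-jet at any point, $\rho$ coincides globally with the linear map $d\rho_0$. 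Orientation-preservation gives $\rho \in SO(d,\mathbb{R})$.

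The main obstacle I anticipate is precisely this rigidity step: justifying why a conformal, globally defined, ball-preserving map fixing $0$ must be linear. The shortest route uses hyperbolic geometry as above, but if one wants to stay fully elementary, one can instead invoke Liouville's theorem (conformal maps of $\mathbb{R}^d$, $d \geq 2$, are Möbius) together with the classification of Möbius transformations fixing both $0$ and $\infty$ as similarities $x \mapsto \lambda A x$, and then note that preservation of $\mathbb{S}^{d-1}$ forces $\lambda = 1$. One minor technicality is verifying that $\rho$ must fix $\infty$ as well: this follows because $\rho$ preserves $B^d$ and hence also its complement in $\mathbb{R}^d \cup \{\infty\}$, and since $\rho(0)=0$ is an interior point of $B^d$, the only fixed point of $\rho$ in the exterior component compatible with preservation of $\mathbb{S}^{d-1}$ under the induced linearization argument is $\infty$.
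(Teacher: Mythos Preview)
The paper does not actually prove this lemma: it is stated with a bare citation to Ratcliffe's textbook and no argument is given. So there is nothing to compare against on the paper's side.

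Your proposal is a correct outline of the standard textbook proof. The sphere-preservation computation is right, the decomposition of $\tau_b$ as inversion-then-reflection is the classical one, and the rigidity step via the hyperbolic metric (an isometry of the ball model fixing $0$ has orthogonal derivative there, and is determined by its $1$-jet) is the cleanest route and fully sufficient. Your final paragraph, however, is both unnecessary and slightly garbled: once you have the hyperbolic-isometry argument, you do not need to separately argue that $\rho$ fixes $\infty$, and the reasoning you give for that point (``the only fixed point of $\rho$ in the exterior component compatible with\ldots'') does not really stand on its own. I would simply delete that paragraph; the argument before it is already complete.
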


It follows that the distortions of any Moebius transformation $\gamma \in \text{Mob}(B^d)$ can be understood by studying the distortions of hyperbolic translations. The main idea is the following: if $\gamma(o)$ is close to the unit sphere, then $\gamma$ contracts strongly on a large part of the sphere. Let us state a quantitative statement:

\begin{lemma}[First contraction lemma]

Let $\gamma \in \text{Mob}(B^d)$. Suppose that $|\gamma(o)| \geq c_0 > 0$. Denote by $x_\gamma^m := \gamma(o)/|\gamma(o)|$, and let $\varepsilon_\gamma := 1-|\gamma(o)|$. Then: 
\begin{enumerate}
\item There exists $c_1,c_2>0$ that only depends on $c_0$ such that
$$ \forall x \in \mathbb{S}^{d-1}, \ |x - x^m_\gamma| \geq c_1 \varepsilon_\gamma^2 \Longrightarrow |\gamma^{-1} x - \gamma^{-1} x^m_\gamma | \geq c_2 .$$
\item For all $c \in (0,1)$, there exists $C \geq 1$ and a set $A_\gamma \subset \mathbb{S}^{d-1}$ such that $\text{diam}(A_\gamma) \leq  C \varepsilon_\gamma$ and such that:
$$ \forall x \in \mathbb{S}^{d-1} \setminus A_\gamma, \ |\gamma(x)-x_\gamma^m| \leq c \varepsilon_\gamma .$$
\end{enumerate}
\end{lemma}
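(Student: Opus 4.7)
The plan is to exploit Lemma 2.1, which provides the Iwasawa-type decomposition $\gamma = \tau_b \circ R$ with $b := \gamma(o)$ and $R \in SO(d,\mathbb{R})$. Since $R$ is an isometry of both the ball and the sphere, all non-trivial geometry on $\mathbb{S}^{d-1}$ is carried by the hyperbolic translation $\tau_b$; the rotation $R$ merely transports distinguished points. Setting $e := b/|b| = x_\gamma^m$, the fixed points of $\tau_b$ (and of its inverse $\tau_{-b}=\tau_b^{-1}$) on the sphere are exactly $\pm e$, so both parts of the lemma reduce to quantitative statements about $\tau_b$ near these two fixed points.

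The technical heart is an explicit chord-length identity. Starting from the formula of Lemma 2.1 and using $|x+b|^2 = 1 + 2x\cdot b + |b|^2$ when $|x|=1$, a direct computation of $1-\tau_b(x)\cdot e$ yields
$$
|\tau_b(x)-e|^2 \;=\; \frac{2(1-x\cdot e)\,\varepsilon_\gamma^2}{|x+b|^2}, \qquad |x+b|^2 \;=\; \varepsilon_\gamma^2 + 2|b|\,(1+x\cdot e),
$$
with an analogous identity for $\tau_{-b}$ obtained by replacing $b$ by $-b$. This single formula encodes all the information needed: the denominator $|x+b|^2$ is of order $1$ away from the repelling fixed point $-e$, and collapses to the critical scale $\varepsilon_\gamma^2$ as $x\to -e$, in agreement with the derivative behaviour $|\tau_b'(\pm e)|\asymp \varepsilon_\gamma^{\mp 1}$.

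For Part 2 I would take $A_\gamma := R^{-1}\bigl(\{y\in\mathbb{S}^{d-1}:|y+e|\le C\varepsilon_\gamma\}\bigr)$ with $C$ chosen depending on $c$ and $c_0$; by the isometry of $R$, $\mathrm{diam}(A_\gamma) \le 2C\varepsilon_\gamma$. For $x\notin A_\gamma$, setting $y:=R(x)$ gives $|y+e|\ge C\varepsilon_\gamma$, hence $|y+b|^2 \gtrsim c_0\,C^2\varepsilon_\gamma^2$, and the chord-length identity applied to $y$ controls $|\gamma(x)-e|=|\tau_b(y)-e|$; the stated upper bound then follows by choosing $C$ large enough in terms of $c$ and $c_0$. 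For Part 1, writing $\gamma^{-1} = R^{-1}\tau_{-b}$ and using that $e$ is a common fixed point of $\tau_{\pm b}$, the isometry $R^{-1}$ reduces matters to a lower bound on $|\tau_{-b}(x)-e|$; the analogue of the chord-length identity gives this quantity as a closed-form, monotonically increasing function of $|x-e|$, from which the threshold estimate follows by an elementary computation.

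The main obstacle is really the careful bookkeeping of constants: they must depend only on $c_0$ and not on the particular $\gamma$, and the diameter estimate $\mathrm{diam}(A_\gamma) \le C\varepsilon_\gamma$ in Part 2 must reflect the genuine rate at which $|x+b|^2$ degenerates near the repelling fixed point (any cruder choice of $A_\gamma$ would not match the predicted scaling). Once the chord-length identity is in hand, no deeper idea is needed: the remainder is elementary algebra on the explicit formulas.
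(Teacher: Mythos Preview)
Your approach is essentially the paper's: both invoke Lemma~2.1 to write $\gamma=\tau_b\,\Omega$ with $\Omega\in SO(d)$ and reduce everything to an explicit one-variable computation for $\tau_b$. The paper tracks the height coordinate through the M\"obius map $\varphi(x_d):=\pi_d\tau_b(x)=\frac{(1+\beta^2)x_d+2\beta}{2\beta x_d+1+\beta^2}$ and evaluates it at $x_d=0$, $-\beta$, $-1+C\varepsilon_\gamma$; your chord-length identity is exactly the same computation in different clothing, since $|\tau_b(x)-e|^2=2\bigl(1-\varphi(x\cdot e)\bigr)$.

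One word of caution on your Part~2 bookkeeping. With the Euclidean cap $\{|y+e|\le C\varepsilon_\gamma\}$, your identity gives only
\[
|\tau_b(y)-e|^2\le \frac{4\varepsilon_\gamma^2}{c_0 C^2\varepsilon_\gamma^2}=\frac{4}{c_0 C^2},
\]
so $|\tau_b(y)-e|\lesssim 1/C$, which is \emph{not} $\le c\varepsilon_\gamma$ for $C$ independent of $\gamma$. The paper instead takes the height-cap $\{y_d\le -1+C\varepsilon_\gamma\}=\{|y+e|^2\le 2C\varepsilon_\gamma\}$ and proves $1-\varphi(y_d)\le\varepsilon_\gamma/C$, which is a bound on $\tfrac12|\tau_b(y)-e|^2$ rather than on $|\tau_b(y)-e|$. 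In either formulation the stated Euclidean scaling only matches up to a square root; this is harmless for the applications (the second contraction lemma only needs $\mathrm{diam}(A_\gamma)$ and $\mathrm{diam}(\gamma(A_\gamma^c))$ small compared to fixed constants), but you should be aware that the literal bound $|\gamma(x)-x_\gamma^m|\le c\varepsilon_\gamma$ does not follow from your choice of $A_\gamma$.
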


\begin{proof}
Let $\gamma \in \text{Mob}(B^{d})$. Since $\gamma = \tau_{\gamma(o)} \Omega$ for some $\Omega \in SO(n,\mathbb{R})$, we see that we may suppose $\gamma = \tau_b$ for some $b \in B^d$. Without loss of generality, we may even choose $b$ of the form $\beta e_d$, where $e_d$ is the d-th vector of the canonical basis of $\mathbb{R}^d$, and where $\beta = |\gamma(o)| \in [c_0,1[$. Denote by $\pi_d$ the projection on the $d-th$ coordinate. We find:
$$ \forall x \in \mathbb{S}^{d-1}, \ \pi_d \tau_b(x) = \frac{(1+\beta^2) x_d + 2\beta}{2 \beta x_d + (1+\beta^2)} =: \varphi(x_d).$$
The function $\varphi$ is continuous and increasing on $[-1,1]$, and fixes $\pm 1$. Computing its value at zero gives $ \varphi(0) = \frac{2 \beta}{1+\beta^2} \geq 1 - \varepsilon_\gamma^2 $,
which proves the first point. Computing its value at $-\beta$ gives $\varphi(-\beta) = \beta$, which (almost) proves the second point. The second point is proved rigorously by a direct computation, noticing that
$$ 1-\varphi(-1+C \varepsilon_\gamma) = \frac{\varepsilon_\gamma}{C} \frac{1-C \varepsilon_\gamma/2}{1- (1-1/(2C))\varepsilon_\gamma} \leq \varepsilon_\gamma/C .$$
\end{proof}
Finally, let us recall a well known way to see $\text{Mob}(B^d)$ as a group of matrices.
\begin{lemma}
Let $q : \mathbb{R}^{d+1} \rightarrow  \mathbb{R}^{d+1}$ be the quadratic form $q(t,\omega) := -t^2 + \sum_i \omega_i^2$  on $\mathbb{R}^{d+1}$. We denote by $SO(d,1)$ the set linear maps with determinant one that preserves $q$. Let $H := \{ (t,\omega) \in \mathbb{R} \times \mathbb{R}^d \ , \ q(t,\omega)=-1 \ , t > 0\}.$ Define the stereographic projection $\zeta : B^d \rightarrow H$ by $\zeta(x) := \left( \frac{1+|x|^2}{1-|x|^2}, \frac{2x}{1-|x|^2} \right)$. Then, for any $\gamma \in \text{Mob}(B^d)$, the map $\zeta \gamma \zeta^{-1} : H \rightarrow H$ is the restriction to $H$ of an element of $SO(d,1)$.
\end{lemma}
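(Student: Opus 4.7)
The plan is to exploit Lemma 2.1 and reduce the statement to two elementary cases. Since $SO(d,1)$ is closed under composition, it is enough to prove the conclusion separately for rotations $\Omega \in SO(d,\mathbb{R})$ (viewed as elements of $\text{Mob}(B^d)$ that fix the origin) and for the hyperbolic translations $\tau_b$. Then Lemma 2.1 lets us conclude for a general $\gamma$ by factoring $\gamma = \tau_{\gamma(o)} \Omega$.

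The rotation case is immediate: from the definition of $\zeta$, one reads off that $\zeta(\Omega x) = (t, \Omega \omega)$ whenever $\zeta(x) = (t,\omega)$, because both coordinates of $\zeta(x)$ depend on $x$ only through $|x|$ and the direction of $x$. Hence $\zeta \Omega \zeta^{-1}$ is the restriction to $H$ of the block matrix $\mathrm{diag}(1,\Omega) \in \mathbb{R}^{(d+1)\times(d+1)}$, which manifestly lies in $SO(d,1)$.

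For the hyperbolic translation case, first reduce to $b = \beta e_d$ with $\beta \in (-1,1)$ by choosing a rotation $\Omega_0 \in SO(d,\mathbb{R})$ sending $e_d$ to $b/|b|$ and writing $\tau_b = \Omega_0 \tau_{|b| e_d} \Omega_0^{-1}$; the rotation case then takes care of the outer factors. For $b = \beta e_d$, I would carry out the direct computation: given $x \in B^d$ with $\zeta(x) = (t, \omega)$, one has $t = \tfrac{1+|x|^2}{1-|x|^2}$ and $\omega_d = \tfrac{2 x_d}{1-|x|^2}$, and plugging the formula for $\tau_b(x)$ into $\zeta$ (using $1-|\tau_b(x)|^2$ computed via the Poincaré-ball conformal factor $1-|\tau_b(x)|^2 = \tfrac{(1-\beta^2)(1-|x|^2)}{\beta^2|x|^2+2\beta x_d + 1}$) should collapse everything to
\[
\zeta \tau_b \zeta^{-1}(t,\omega_1,\dots,\omega_d) = \bigl(\cosh\alpha \cdot t + \sinh\alpha \cdot \omega_d,\ \omega_1,\dots,\omega_{d-1},\ \sinh\alpha \cdot t + \cosh\alpha \cdot \omega_d\bigr),
\]
where $\cosh\alpha = (1-\beta^2)^{-1/2}$ and $\sinh\alpha = \beta(1-\beta^2)^{-1/2}$, i.e.\ the Lorentz boost of rapidity $\alpha = \operatorname{arctanh}(\beta)$ in the $(t,\omega_d)$-plane. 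This linear extension is visibly in $SO(d,1)$: it preserves $q$ (check: $-\cosh^2\alpha + \sinh^2\alpha = -1$) and has determinant $+1$.

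The main obstacle is just to execute the algebraic collapse for $\tau_{\beta e_d}$ cleanly; it is a bookkeeping calculation rather than a conceptual difficulty. One small subtlety to keep in mind is that the statement requires the linear map to have determinant $+1$ (not just preserve $q$), but this is automatic since both building blocks (rotations with trivial first block, and the boost above) are connected to the identity in $O(d,1)$, and so is any composition of them.
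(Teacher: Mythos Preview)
Your strategy is identical to the paper's: factor $\gamma = \tau_{\gamma(o)}\,\Omega$ via Lemma~2.1, dispatch rotations by inspection as the block matrix $\mathrm{diag}(1,\Omega)$, and compute $\zeta\,\tau_{\beta e_d}\,\zeta^{-1}$ directly to exhibit a Lorentz boost in the $(t,\omega_d)$-plane.

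The one slip is in the rapidity you predict. Carrying out the computation (as the paper does) gives
\[
\cosh\alpha = \frac{1+\beta^2}{1-\beta^2}, \qquad \sinh\alpha = \frac{2\beta}{1-\beta^2},
\]
so $\beta = \tanh(\alpha/2)$, not $\beta = \tanh\alpha$. A quick sanity check catches this: $\tau_{\beta e_d}(0) = \beta e_d$, and the $t$-coordinate of $\zeta(\beta e_d)$ is $(1+\beta^2)/(1-\beta^2)$, whereas your formula applied to $\zeta(0)=(1,0)$ would give $(1-\beta^2)^{-1/2}$. This does not affect the argument structurally---the resulting map is still a Lorentz boost in $SO(d,1)$---and you would discover the correct coefficients upon actually executing the algebra you outlined.
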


\begin{proof}

It suffices to check the lemma when $\gamma$ is a rotation or a hyperbolic translation. A direct computation shows that, when $\Omega \in SO(d,\mathbb{R})$, then $\zeta \Omega \zeta^{-1}$ is a rotation leaving invariant the $t$ coordinate, and so it is trivially an element of $SO(d,1)$. We now do the case where $\gamma = \tau_{\beta e_d}$ is a hyperbolic translation. We denote by $x$ the variable in $B^d$ and $(t,\omega)$ the variables in $\mathbb{R}^{d+1}$. The expression $\zeta(x) = (t,\omega)$ gives
$$ \frac{1+|x|^2}{1-|x|^2} = t, \quad \frac{2x}{1-|x|^2} = \omega, \ \text{and} \quad \zeta^{-1}(t,\omega) = \frac{\omega}{1+t} .$$
For $\alpha \in \mathbb{R}$, denote $s_\alpha := \sinh(\alpha)$ and $c_\alpha := \cosh(\alpha)$. There exists $\alpha$ such that $\beta = s_\alpha /(c_\alpha+1) = (c_\alpha-1)/s_\alpha$. For this $\alpha$, we also have $\beta^2 = (c_\alpha-1)/(c_\alpha+1)$, and $1-\beta^2 = 2/(c_\alpha+1)$. Now, we see that
$$ \tau_{\beta e_d}(x) = \frac{(1-\beta^2) x + (|x|^2 + 2 x_d \beta + 1)  \beta e_d}{\beta^2 |x|^2 + 2 x_d \beta + 1}  = \frac{\frac{2}{c_\alpha+1} x + (|x|^2 + 2 x_d \frac{c_\alpha-1}{s_\alpha} + 1) \frac{s_\alpha}{c_\alpha+1} e_d}{\frac{c_\alpha-1}{c_\alpha+1} |x|^2 + 2 x_d \frac{s_\alpha}{c_\alpha+1}  + 1} $$
$$ = \frac{2x + (s_\alpha(1+|x|^2) + 2x_d(c_\alpha-1) ) e_d }{1-|x|^2+ c_\alpha(1+|x|^2) + 2 s_\alpha x_d} = \frac{\omega + \left(s_\alpha t + (c_\alpha -1) \omega_d \right)e_d}{1+ c_\alpha t + s_\alpha \omega_d} $$
$$ = \zeta^{-1}\left( c_\alpha t + s_\alpha \omega_d , \omega + \left( s_\alpha t + (c_\alpha-1)\omega_d \right)e_d \right) ,$$
and so $\zeta \tau_{\beta e_d} \zeta^{-1}(t,\omega) = \left( c_\alpha t + s_\alpha \omega_d \ , \ \omega + \left( s_\alpha t + (c_\alpha-1)\omega_d \right)e_d \right)  $ is indeed linear in $(t,\omega)$. In this form, checking that $\zeta \tau_{\beta e_d} \zeta^{-1} \in SO(d,1)$ is immediate.
\end{proof}

\begin{remark}
From now on, we will allow to directly identify elements of $\text{Mob}(B^d)$ with matrices in $SO(d,1)$. (By continuity of $\gamma \mapsto \zeta \gamma \zeta^{-1}$, we even know that thoses matrices lies in $SO_0(d,1)$, the connected component of the identity in $SO(d,1)$). 
It follows from the previous explicit computations that, for any matrix norm on $SO(d,1)$, and for any $\gamma \in \text{Mob}(B^d)$ such that $|\gamma(o)| \geq c_0$, there exists $C_0$ only depending on $c_0$ such that  $$ \|\gamma\| \leq C_0 \varepsilon_\gamma^{-1}.$$
\end{remark}

\subsection{The Gibbs cocycle}

In this paragraph, we introduce our geometric setting. For an introduction to geometry in negative (non-constant) curvature, the interested reader may refer to \cite{BGS85}, or to the first chapters of \cite{PPS15}. \\

Let $M = \mathbb{H}^d/\Gamma$ be a hyperbolic manifold of dimension $d$, where $\Gamma \subset Iso^+(\mathbb{H}^d)$ denotes a non-elementary and discrete group of isometries of the hyperbolic space without torsion that preserves the orientation. Let $T^1 M$ denotes the unit tangent bundle of $M$, and denote by $p:T^1 M \rightarrow M$ the usual projection. The projection lift to a $\Gamma$-invariant map $\tilde{p}:T^1 \mathbb{H}^d \rightarrow \mathbb{H}^d$. Fix $F : T^1 M \rightarrow \mathbb{R}$ a Hölder map: we will call it a potential. The potential $F$ lift to a $\Gamma$-invariant map $\tilde{F} : T^1 \mathbb{H}^d \rightarrow \mathbb{R}$. All future constants appearing in the paper will implicitely depend on $\Gamma$ and $F$. \\

To be able to do use our previous results about Moebius transformations, we will work in the conformal ball model for a bit. In this model, we can think of $\mathbb{H}^d$ as being the unit ball $B^d$ equipped with the metric $ds^2 := \frac{4\|dx\|^2}{(1-\|x\|^2)^2}$. The ideal boundary $\partial_\infty \mathbb{H}^d$  (see \cite{BGS85} for a definition) of $\mathbb{H}^d$ is then naturally identified with $\mathbb{S}^{d-1}$, and its group of orientation-preserving isometries with $\text{Mob}(B^d)$. On the ideal boundary, there is a natural family of distances $(d_x)_{x \in \mathbb{H}^d}$ called visual distances (seen from $x$), defined as follow:
$$ d_x(\xi,\eta) := \lim_{t \rightarrow +\infty}  \exp\left( -\frac{1}{2} \left(  d(x,\xi_t) + d(x,\eta_t) - d(\xi_t,\eta_t) \right) \right) \in [0,1], $$
where $\xi_t$ and $\eta_t$ are any geodesic rays ending at $\xi$ and $\eta$. To get the intuition behind this quantity, picture a finite tree with root $x$ and think of $\xi$ and $\eta$ as leaves in this tree. 

\begin{lemma}[\cite{PPS15} page 15 and \cite{LNP19} lemma A.5]
The visual distances are all equivalent and induces the usual euclidean topology on $\mathbb{S}^{d-1} \simeq \partial_\infty \mathbb{H}^d$. More precisely:
$$ \forall x,y \in \mathbb{H}^d, \forall \xi,\eta \in \partial_\infty \mathbb{H}^d, \  \ e^{-d(x,y)} \leq \frac{d_x(\xi,\eta)}{d_y(\xi,\eta)} \leq e^{d(x,y)} .$$
In the ball model, the visual distance from the center of the ball is the sine of (half of) the angle.
\end{lemma}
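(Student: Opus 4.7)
The plan is to prove the three assertions of the lemma in order: the comparison inequality between two visual distances, and then use it to derive topological equivalence, and finally treat the sine formula at the center of the ball by a direct calculation.

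For the comparison inequality, the key observation is that the expression under the limit is essentially a Gromov product, and the triangle inequality controls it uniformly in $t$. More precisely, for any $z \in \mathbb{H}^d$ the triangle inequality gives $|d(x,z) - d(y,z)| \leq d(x,y)$. Applying this with $z = \xi_t$ and then $z = \eta_t$ and summing, I would get
$$\bigl|(d(x,\xi_t) + d(x,\eta_t)) - (d(y,\xi_t) + d(y,\eta_t))\bigr| \leq 2 d(x,y),$$
while the term $d(\xi_t,\eta_t)$ is common and cancels. Dividing by $2$ and passing to the limit defining $d_x$ and $d_y$, this yields
$$ \bigl|\tfrac{1}{2}(d(x,\xi_t)+d(x,\eta_t)-d(\xi_t,\eta_t)) - \tfrac{1}{2}(d(y,\xi_t)+d(y,\eta_t)-d(\xi_t,\eta_t))\bigr| \leq d(x,y), $$
and exponentiating gives the claimed two-sided inequality $e^{-d(x,y)} \leq d_x(\xi,\eta)/d_y(\xi,\eta) \leq e^{d(x,y)}$ (in particular, existence of the limit defining $d_y$ from that of $d_x$ follows from the same argument with $\xi_t,\eta_t$ replaced by any pair of divergent sequences on geodesic rays).

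For the sine formula at the origin of the ball model, I would use the explicit formulas of the Poincaré ball: the hyperbolic distance from $o$ to a point at Euclidean radius $r$ is $2\,\mathrm{artanh}(r)$, so a geodesic ray parametrized by hyperbolic arclength $t$ reaches Euclidean radius $r_t = \tanh(t/2)$. For two ideal points $\xi,\eta \in \mathbb{S}^{d-1}$ with Euclidean angle $\alpha$ at $o$, the points $\xi_t,\eta_t$ both sit at Euclidean radius $r_t$, with Euclidean separation $2 r_t \sin(\alpha/2)$. The standard ball-model identity
$$ \cosh d(\xi_t,\eta_t) = 1 + \frac{2\,|\xi_t-\eta_t|^2}{(1-|\xi_t|^2)(1-|\eta_t|^2)} $$
combined with $1-r_t^2 = \mathrm{sech}^2(t/2)$ reduces to $\cosh d(\xi_t,\eta_t) = 1 + 2\sinh^2(t)\sin^2(\alpha/2)$. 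As $t \to \infty$ the right-hand side is asymptotic to $\tfrac{1}{2} e^{2t} \sin^2(\alpha/2)$, hence $d(\xi_t,\eta_t) = 2t + 2 \ln\sin(\alpha/2) + o(1)$. Since $d(o,\xi_t) = d(o,\eta_t) = t$, the quantity inside $\exp(-\tfrac12 \cdot)$ converges to $-2\ln\sin(\alpha/2)$, so $d_o(\xi,\eta) = \sin(\alpha/2)$ as claimed.

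From the sine formula, $d_o$ is a continuous function of the Euclidean angle, hence a continuous function of the Euclidean distance on $\mathbb{S}^{d-1}$ (via $|\xi-\eta| = 2 \sin(\alpha/2)$, so in fact $d_o(\xi,\eta) = \tfrac12 |\xi-\eta|$ exactly), so $d_o$ induces the usual Euclidean topology. Combined with the first part, every visual distance $d_x$ is bi-Lipschitz equivalent to $d_o$ with constants $e^{\pm d(x,o)}$, so all $d_x$ induce the Euclidean topology as well. The only delicate step is really the asymptotic expansion of $d(\xi_t,\eta_t)$ in the ball model, but that is just an explicit computation with the closed-form cosh-distance formula.
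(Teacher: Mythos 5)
The paper does not prove this lemma; it cites it directly from \cite{PPS15} (p.\ 15) and \cite{LNP19} (Lemma A.5), so there is no in-paper proof to compare against. Your proof is a valid self-contained verification. The comparison inequality argument via the triangle inequality applied at $z=\xi_t$ and $z=\eta_t$ is exactly the standard one, and the explicit computation in the ball model correctly reduces $\cosh d(\xi_t,\eta_t) = 1 + 2\sinh^2(t)\sin^2(\alpha/2)$ and extracts the asymptotics $d(\xi_t,\eta_t) = 2t + 2\ln\sin(\alpha/2) + o(1)$, giving $d_o(\xi,\eta) = \sin(\alpha/2) = \tfrac12|\xi-\eta|$ as claimed. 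For comparison, \cite{LNP19} obtains the $\sin(\alpha/2)$ formula via the hyperbolic law of sines rather than the closed-form $\cosh$-distance identity, so you are getting the same result by a slightly different (but equally elementary) computation.

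One minor point worth tightening: you remark parenthetically that existence of the limit defining $d_y$ ``follows from the same argument'' given existence for $d_x$. It does not: the bound $|(\xi_t|\eta_t)_x - (\xi_t|\eta_t)_y| \leq d(x,y)$ shows the two Gromov products stay at bounded distance, but a sequence at bounded distance from a convergent one need not converge. Existence of the limit is a separate (standard) fact about CAT($-1$) spaces, and in the ball model it is anyway established by your explicit asymptotic computation for $x=o$; once it holds for $o$ it holds for general $x$ by an argument analogous to the one you give for the sine formula (or by appealing to the general theory). As written, though, the parenthetical as stated does not quite do the job, so it should either be dropped or replaced by a reference to the explicit computation.
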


The sphere at infinity $\partial_\infty \mathbb{H}^d$ takes an important role in the study of $\Gamma$. 
For any $x \in \mathbb{H}^d$, the orbit $\Gamma x$ accumulates on $\mathbb{S}^{d-1}$ (for the euclidean topology) into a (fractal) \emph{limit set} denoted $\Lambda_\Gamma$. This limit set is independant of $x$. We will denote by $\text{Hull}(\Lambda_\Gamma)$ the convex hull of the limit set: that is, the set of points $x \in \mathbb{H}^d$ such that $x$ is in a geodesic starting and finishing in $\Lambda_\Gamma$. Since $\Gamma$ acts naturally on $\Lambda_\Gamma$, $\Gamma$ acts on $\text{Hull}(\Lambda_\Gamma)$. Without loss of generality, we can assume that $o \in \text{Hull}( \Lambda_\Gamma)$, and we will do from now on. We will say that $\Gamma$ is convex-cocompact if $\Gamma$ is discrete, without torsion and if $\text{Hull}( \Lambda_\Gamma) / \Gamma$ is compact. In particular, in this paper, we allow $M$ to be compact. \\

We will suppose througout the paper that $\Gamma$ is convex cocompact. In this context, the set $\Omega \Gamma = p^{-1}( \text{Hull} \Lambda_\Gamma / \Gamma ) \subset T^1 M$ is compact, and it follows that $\sup_{\Omega \Gamma} |F| < \infty$. In particular, $\tilde{F}$ is bounded on $\tilde{p}^{-1}( \text{Hull}(\Lambda_\Gamma))$, which is going to allow us to get some control over line integrals involving $F$. Recall the notion of line integral in this context:
if $x,y \in \mathbb{H}^d$ are distinct points, then there exists a unique unit speed geodesic joining $x$ to $y$, call it $c_{x,y}$. We then define:
$$ \int_x^y \tilde{F} := \int_0^{d(x,y)} \tilde{F}(\dot{c}_{x,y}(s)) ds .$$
Beware that if $\tilde{F}(-v) \neq \tilde{F}(v)$ for some $v \in T^1 M$, then $\int_x^y \tilde{F}$ and $\int_y^x \tilde{F}$ might not be equal. \\
We are ready to introduce the \emph{Gibbs cocycle} and recall some of its properties.

\begin{definition}[\cite{PPS15}, page 39]
The following \say{Gibbs cocycle} $C_F : \partial_\infty \mathbb{H}^d \times \mathbb{H}^d \times \mathbb{H}^d \rightarrow \mathbb{R}$ is well defined and continuous:
$$ C_{F,\xi}(x,y) := \underset{t \rightarrow + \infty}{\lim} \left( \int_y^{\xi_t} \tilde{F} - \int_x^{\xi_t} \tilde{F}  \right) $$
where $\xi_t$ denotes any geodesic converging to $\xi$.
\end{definition}

\begin{remark}
Notice that if $\xi$ is the endpoint of the ray joining $x$ to $y$, then $$C_{F,\xi}(x,y) = - \int_x^y \tilde{F}.$$
\end{remark}
For $A \subset \mathbb{H}^d$, we call \say{shadow of $A$ seen from $x$} the set $\mathcal{O}_x A$ of all $\xi \in \partial_\infty \mathbb{H}^d$ such that the geodesic joining $x$ to $\xi$ intersects $A$. (The letter $\mathcal{O}$ stands for \say{Ombre} in french.)

\begin{proposition}[\cite{PPS15}, prop 3.4 and 3.5]
We have the following estimates on the Gibbs cocycle.

\begin{enumerate}
\item For all $R>0$, there exists $C_0>0$ such that for all $\gamma \in \Gamma$ and for all $\xi \in \mathcal{O}_o B(\gamma(o),R)$ in the shadow of the (hyperbolic) ball $B(\gamma(o),R)$ seen from $o$, we have:
$$ \left| C_{F,\xi}(o,\gamma(o)) + \int_o^{\gamma(o)} \tilde{F} \right| \leq C_0 $$
\item 
There exists $\alpha \in (0,1)$ and $C_0>0$ such that, for all $\gamma \in \Gamma$ and for all $\xi,\eta \in \Lambda_\Gamma$ such that $d_o(\xi,\eta) \leq e^{-d(o,\gamma(o))-2}$,
$$ \left| C_{F,\xi}(o,\gamma(o)) - C_{F,\eta}(o,\gamma(o)) \right| \leq C_0 e^{ \alpha d(o,\gamma(o))} d_o(\xi,\eta)^{\alpha}. $$
(The hypothesis asking $\xi,\eta$ to be very close can be understood as an hypothesis asking for the rays $[o,\xi[,[o,\eta[$ and $[\gamma(o),\xi[,[\gamma(o),\eta[$ to be close. This way, we can use the Hölder regularity of $\tilde{F}$ to get some control.)
\end{enumerate}

\end{proposition}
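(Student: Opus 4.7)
For Part 1, my plan is to decompose the geodesic from $o$ to a point far along the ray toward $\xi$ into a piece that stays near the geodesic $[o, \gamma(o)]$ and a piece that fellow-travels $[\gamma(o), \xi_t]$, then exploit the exponential convergence of asymptotic rays in $\mathbb{H}^d$ together with the boundedness and Hölder regularity of $\tilde F$ on the relevant tube. Concretely, I would let $p$ denote the foot of the perpendicular from $\gamma(o)$ onto the ray $[o, \xi)$; the shadow hypothesis gives $d(p, \gamma(o)) \leq R$ and the triangle inequality yields $|d(o, p) - d(o, \gamma(o))| \leq R$. Splitting $\int_o^{\xi_t} \tilde F = \int_o^p \tilde F + \int_p^{\xi_t} \tilde F$, I would argue that (i) $\int_o^p \tilde F = \int_o^{\gamma(o)} \tilde F + O_R(1)$, because the segments $[o,p]$ and $[o,\gamma(o)]$ are of almost equal length with endpoints within $R$ of each other and so by $\delta$-hyperbolicity they $O_R(1)$-fellow-travel with exponential precision; and (ii) $\int_{\gamma(o)}^{\xi_t} \tilde F - \int_p^{\xi_t} \tilde F = O_R(1)$ uniformly in $t$, by the same argument applied to two rays starting within distance $R$ of each other and converging to the common point $\xi_t$. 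In both cases exponential distance bounds are converted to uniformly bounded integral errors via Hölder control of $\tilde F$. Passing to the limit $t \to +\infty$ yields the claim.

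For Part 2, set $T := d(o, \gamma(o))$ and $d := d_o(\xi, \eta)$, so the hypothesis reads $d \leq e^{-T-2}$. The visual distance formula together with $\delta$-hyperbolicity gives the exponential separation estimate $d(\xi_s, \eta_s) \lesssim d \cdot e^s$ along the rays from $o$, valid for $s$ up to $|\log d|$, which by assumption is at least $T + 2$; Lemma 2.6 gives $d_{\gamma(o)}(\xi, \eta) \leq e^T d \leq e^{-2}$, so the analogous bound holds along rays from $\gamma(o)$. I would approximate each cocycle at a finite parameter $t_* \gg T + |\log d|$, incurring only exponentially small error thanks to the fellow-traveling of $[o, \xi_t]$ with $[\gamma(o), \xi_t]$, and then pair the integrands at corresponding points on the four rays. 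On the initial segment $s \in [0, T]$, Hölder regularity of $\tilde F$ turns the exponential separation into a contribution bounded by $\int_0^T (d\, e^s)^\alpha ds \lesssim d^\alpha e^{\alpha T}/\alpha$, matching the claimed bound. Beyond $s = T$, where the pairs of rays $([o, \xi), [o, \eta))$ and $([\gamma(o), \xi), [\gamma(o), \eta))$ begin to separate, the cocycle structure ensures that the divergent tails of the two outer brackets $\int_o^{\xi_t}\tilde F - \int_o^{\eta_t}\tilde F$ and $\int_{\gamma(o)}^{\xi_t}\tilde F - \int_{\gamma(o)}^{\eta_t}\tilde F$ cancel in pairs, modulo bounded fellow-traveling errors absorbed into the same Hölder estimate.

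The main obstacle lies in Part 2, in cleanly organizing the tail cancellations: each individual $\tilde F$-integral along a single ray grows linearly in $t$, so the stated bound is meaningful only once the cancellation built into the cocycle differences is made explicit. This is the step where $\delta$-hyperbolicity of $\mathbb{H}^d$ must be exploited most carefully—either through a Gromov-product-based matching of points on the four rays, or by parameterizing rays by the Busemann height relative to a fixed endpoint—to ensure that what remains, after all cancellations, is controlled by the Hölder modulus of $\tilde F$ alone and not by the lengths of the individual geodesic segments.
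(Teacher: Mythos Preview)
The paper does not supply its own proof of this proposition: it is quoted directly from \cite{PPS15}, Propositions~3.4 and~3.5, and used as a black box. So there is no paper proof to compare against; your task here was not to reproduce an argument from the present paper but to sketch the argument behind a cited result.

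That said, your sketch is broadly in line with how such estimates are proved in \cite{PPS15}. For Part~1 the decomposition via the nearest point $p$ on $[o,\xi)$ to $\gamma(o)$, the thin-triangle comparison of $[o,p]$ with $[o,\gamma(o)]$, and the fellow-traveling of the two rays to $\xi$ from $p$ and $\gamma(o)$ is exactly the mechanism used there; the H\"older regularity of $\tilde F$ together with the exponential convergence of asymptotic rays is what converts the geometric bounds into integral bounds. For Part~2 your strategy---controlling the separation $d(\xi_s,\eta_s)$ by $d_o(\xi,\eta)e^s$ up to the Gromov-product time, applying the H\"older estimate on that initial segment, and then arguing that the tails cancel in the cocycle difference---is again the standard route. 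Your identification of the main obstacle (organizing the tail cancellation so that no unbounded term survives) is accurate, and in \cite{PPS15} this is indeed handled by a Busemann-type reparameterization that aligns the four rays past time $\sim |\log d_o(\xi,\eta)|$.

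One point to tighten: in Part~2 you implicitly use that the relevant geodesic segments stay in a region where $\tilde F$ is bounded and H\"older. In the present paper $\Gamma$ is convex-cocompact and $\xi,\eta\in\Lambda_\Gamma$, so the rays $[o,\xi)$, $[o,\eta)$, $[\gamma(o),\xi)$, $[\gamma(o),\eta)$ lie in (a bounded neighborhood of) $\text{Hull}(\Lambda_\Gamma)$, on which $\tilde F$ is bounded; you should say this, since otherwise the H\"older estimate on $\tilde F$ alone does not control the integral over long segments.
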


\subsection{Patterson-Sullivan densities}

In this paragraph, we recall the definition of $(\Gamma,F)$ Patterson-Sullivan densities, and we introduce a regularity condition. To begin with, recall the definition and some properties of the critical exponent of $(\Gamma,F)$.

\begin{definition}[\cite{PPS15}, Lemma 3.3]
Recall that $F$ is supposed Hölder, and that $\Gamma$ is convex-cocompact. The critical exponent of $(\Gamma,F)$ is the quantity $\delta_{\Gamma,F} \in \mathbb{R}$ defined by:
$$ \delta_{\Gamma,F} := \underset{n \rightarrow \infty}{\limsup} \  \underset{n-c < d(x,\gamma y) \leq n}{\frac{1}{n} \ln \ {\sum_{\gamma \in \Gamma}} \ e^{\int_x^{\gamma y} \tilde{F} }} ,$$
for any $x,y \in \mathbb{H}^d$ and any $c>0$. The critical exponent dosen't depend on the choice of $x,y$ and $c$.
\end{definition}

\begin{theorem}[\cite{PPS15}, section 3.6 and section 5.3]

Let $\Gamma \subset Iso^+(\mathbb{H}^d)$ be convex-cocompact, and note $M := \mathbb{H}^d / \Gamma$.
Let $F : T^1 M \rightarrow \mathbb{R}$ be a Hölder regular potential. Then there exists a unique (up to a scalar multiple) family of finite nonzero measures $(\mu_x)_{x \in \mathbb{H}^d}$ on $\partial_\infty \mathbb{H}^d$ such that, for all $\gamma \in \Gamma$, for all $x,y \in \mathbb{H}^d$ and for all $\xi \in \partial_\infty \mathbb{H}^d$:
\begin{itemize}
\item $\gamma_* \mu_x = \mu_{\gamma x}$
\item $d \mu_x(\xi) = e^{-C_{F-\delta_{\Gamma,F},\xi}(x,y)} d\mu_y(\xi)$
\end{itemize}
Moreover, these measures are all supported on the limit set $\Lambda_\Gamma$. We call them $(\Gamma,F)$-Patterson Sullivan densities.
\end{theorem}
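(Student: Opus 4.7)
The plan is to follow the classical Patterson construction, adapted to the potential $F$, essentially as in Chapter 3 of \cite{PPS15}.

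\textbf{Existence via weighted Poincaré series.} For $s > \delta_{\Gamma,F}$, the series
\[
P_o(s) := \sum_{\gamma \in \Gamma} e^{\int_o^{\gamma o}(\tilde F - s)}
\]
converges by definition of the critical exponent. For each $x \in \mathbb{H}^d$ I would introduce the atomic measure
\[
\mu_{x,s} := \frac{1}{P_o(s)} \sum_{\gamma \in \Gamma} e^{\int_x^{\gamma o}\tilde F - s\, d(x,\gamma o)}\, \delta_{\gamma o}
\]
on the compactification $\overline{\mathbb{H}^d} = \mathbb{H}^d \cup \partial_\infty \mathbb{H}^d$. In the convex-cocompact case this is of \emph{divergent type} at $s = \delta_{\Gamma,F}$ (this is where compactness of $\text{Hull}(\Lambda_\Gamma)/\Gamma$ is used, via Sullivan's shadow lemma argument); so I may extract a weak-$*$ limit $\mu_x$ along some sequence $s_n \to \delta_{\Gamma,F}^+$. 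In the convergent-type situation, one must first apply Patterson's trick of modifying the weights by a slowly varying function $h(d(x,\gamma o))$ to force divergence, but the rest of the argument is unchanged.

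\textbf{Properties of the limit.} The limit measure is automatically supported on $\partial_\infty \mathbb{H}^d$, because the divergence of $P_o(s_n)$ sends mass off any compact subset of $\mathbb{H}^d$; combined with the fact that $\Gamma o$ only accumulates on $\Lambda_\Gamma$, I get $\text{supp}(\mu_x) \subset \Lambda_\Gamma$. For the $\Gamma$-equivariance, the relation $g_*\mu_{x,s} = \mu_{gx,s}$ is a direct change of variable $\gamma \mapsto g\gamma$ in the sum, using that $\tilde F$ and $d$ are $\Gamma$-invariant and that only the \emph{denominator} $P_o(s)$ is anchored at $o$; it passes to the weak limit. For the quasi-conformal relation, I compare the weights: the ratio of the atom at $\gamma o$ in $\mu_{x,s}$ versus $\mu_{y,s}$ is
\[
\exp\!\left( \int_x^{\gamma o}\tilde F - \int_y^{\gamma o}\tilde F - s\bigl(d(x,\gamma o) - d(y,\gamma o)\bigr)\right),
\]
and as $\gamma o \to \xi \in \Lambda_\Gamma$ this ratio converges exactly to $e^{-C_{F-s,\xi}(x,y)}$ by the very definition of the Gibbs cocycle (applied to $\tilde F$ and to the constant potential $1$). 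A standard approximation argument (using the continuity of $C_F$ in $\xi$) then upgrades this pointwise convergence into the desired identity of densities.

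\textbf{Uniqueness.} This is the subtlest point. The standard route, which I would follow, is to combine the construction with the fact that any $(\Gamma,F)$-conformal density of dimension $\delta_{\Gamma,F}$ produces via the Hopf parametrization a locally finite $\phi$-invariant measure on $T^1 M$, whose normalization is an equilibrium state for $F$; then Theorem~1.1 (uniqueness of the equilibrium state) forces uniqueness of the normalized density. Concretely: given two such densities $\mu, \mu'$, the induced BMS-type measures $\tilde m, \tilde m'$ on $T^1 \mathbb H^d$ are $\Gamma$-invariant, project to finite $\phi$-invariant measures on $T^1 M$, and are equilibrium states by the variational principle; uniqueness of the equilibrium state gives $\tilde m = \lambda \tilde m'$ for some $\lambda > 0$, and then a Fubini-type disintegration in Hopf coordinates (together with the quasi-conformality of both $\mu$ and $\mu'$) gives $\mu = \sqrt{\lambda}\, \mu'$.

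\textbf{Main obstacle.} The existence of the weak limit and its transformation laws are essentially formal once the Gibbs cocycle estimates of Proposition~2.10 are in hand; the real work is the divergence-type property and the uniqueness. For convex-cocompact $\Gamma$, the divergence type is exactly the content of the shadow lemma plus compactness of $\Omega\Gamma$, which is classical. Uniqueness, as explained above, is where one genuinely relies on the deep ergodic machinery of Bowen, Ruelle, and PPS, and I would treat it as a black box citation to \cite{PPS15}, Sections~3.6 and~5.3, rather than reprove it.
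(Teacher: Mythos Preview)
The paper does not prove this theorem at all: it is stated as a citation to \cite{PPS15}, Sections~3.6 and~5.3, and no argument is given. Your sketch is a faithful outline of the standard Patterson construction with potential as carried out in \cite{PPS15}, and you yourself conclude by deferring the hard parts (divergence type, uniqueness via the equilibrium-state machinery) to that reference --- which is exactly what the paper does.
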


\begin{remark}
Notice that Patterson-Sullivan densities only depend on the normalized potential $F-\delta_{\Gamma,F}$. Since $\delta_{\Gamma,F + \kappa} = \delta_{\Gamma,F} + \kappa $, replacing $F$ by $F-\delta_{\Gamma,F}$ allows us to work without loss of generality with potential satisfying $\delta_{\Gamma,F}=0$. We call such potential \emph{normalized}.
\end{remark}

The next estimate tells us, in a sense, that we can think of $\mu_0$ as a measure of a \say{fractal solid angle}, pondered by the potential. This is better understood by recalling that since the area of a hyperbolic sphere of large radius $r$ is a power of $\sim e^r$, then the solid angle of an object of diameter $1$ lying in that sphere is a power of $\sim e^{-r}$. In the following \say{Shadow lemma}, the object is a ball $B(y,R)$, at distance $d(x,y)$ from an observer at $x$.

\begin{proposition}[Shadow Lemma, \cite{PPS15} Lemma 3.10]
Let $R>0$ be large enough. There exists $C>0$ such that, for all $x,y \in \text{Hull}(\Lambda_\Gamma)$:

$$ C^{-1} e^{\int_x^{y} (\tilde{F} - \delta_{\Gamma,F})} \leq \mu_x\left( \mathcal{O}_x B(y,R) \right) \leq C e^{\int_x^{y} (\tilde{F}-\delta_{\Gamma,F})} .$$
\end{proposition}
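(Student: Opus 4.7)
The plan is to combine the quasi-invariance of the Patterson--Sullivan densities with the cocycle estimate on shadows (Proposition 2.9, item 1), reducing the claim to a uniform positive lower bound for the $\mu_y$-mass of the shadow. By Remark 2.12 I may assume $F$ is already normalized so that $\delta_{\Gamma,F}=0$, and the target estimate becomes $\mu_x(\mathcal{O}_x B(y,R)) \asymp e^{\int_x^y \tilde F}$ uniformly in $x,y \in \text{Hull}(\Lambda_\Gamma)$.

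The starting point is the identity
$$ \mu_x(\mathcal{O}_x B(y,R)) = \int_{\mathcal{O}_x B(y,R)} e^{-C_{F,\xi}(x,y)} \, d\mu_y(\xi), $$
given directly by the definition of Patterson--Sullivan densities. The key technical input is that Proposition 2.9, item 1 extends from pairs $(o, \gamma(o))$ to arbitrary $x,y \in \text{Hull}(\Lambda_\Gamma)$: the proof is unchanged, since $\tilde F$ is bounded on $\tilde p^{-1}(\text{Hull}(\Lambda_\Gamma))$ by convex-cocompactness, and for $\xi \in \mathcal{O}_x B(y,R)$ the geodesic ray from $x$ to $\xi$ passes within distance $R$ of $y$, allowing one to compare $C_{F,\xi}(x,y)$ with $-\int_x^y \tilde F$ up to an additive constant $C_0 = C_0(R,F,\Gamma)$. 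Inserting this uniform bound into the integrand gives
$$ e^{-C_0}\, e^{\int_x^y \tilde F}\, \mu_y(\mathcal{O}_x B(y,R)) \;\leq\; \mu_x(\mathcal{O}_x B(y,R)) \;\leq\; e^{C_0}\, e^{\int_x^y \tilde F}\, \mu_y(\mathcal{O}_x B(y,R)), $$
so one is reduced to bounding $\mu_y(\mathcal{O}_x B(y,R))$ above and below by positive constants, uniformly in $x,y$.

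The upper bound is immediate: $\mu_y(\mathcal{O}_x B(y,R)) \leq \|\mu_y\|$, and by the equivariance $\gamma_*\mu_y = \mu_{\gamma y}$ the total mass $\|\mu_y\|$ is $\Gamma$-invariant and hence descends to a continuous function on the compact quotient $\text{Hull}(\Lambda_\Gamma)/\Gamma$, hence is uniformly bounded. The main obstacle is the uniform lower bound. My plan is to split on the size of $d(x,y)$: for $d(x,y) \leq D$ fixed, $\Gamma$-equivariance reduces the problem to $(x,y)$ ranging in a compact subset of $\text{Hull}(\Lambda_\Gamma)^2$, on which $(x,y) \mapsto \mu_y(\mathcal{O}_x B(y,R))$ is continuous and strictly positive, because $\mu_y$ has full support $\Lambda_\Gamma$ and $\mathcal{O}_x B(y,R) \cap \Lambda_\Gamma$ is a nonempty open subset of $\Lambda_\Gamma$ (for $R$ large enough, using $x,y \in \text{Hull}(\Lambda_\Gamma)$); compactness then yields a positive infimum. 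For $d(x,y) > D$ large, the hyperbolic geometry forces $\mathcal{O}_x B(y,R)$ to contain a smaller ``forward'' shadow of the form $\mathcal{O}_y B(z, R/2)$, where $z$ is a point on the geodesic from $x$ through $y$ at distance $R/2$ past $y$; this reduces to a configuration already controlled by the compact case and yields the desired uniform lower bound. This bootstrapping from the large-$d(x,y)$ regime back to the compact one is the most delicate step, and it is the place where both the convex-cocompactness of $\Gamma$ and the largeness of $R$ genuinely enter.
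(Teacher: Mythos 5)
The paper does not prove this statement at all: Proposition 2.11 is quoted directly from \cite{PPS15} (Lemma 3.10) with no proof supplied, so there is no in-paper argument to compare against. I will therefore assess your blind proof on its own merits.

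Your overall structure is the standard Sullivan-type argument and is correct: rewrite $\mu_x(\mathcal{O}_x B(y,R)) = \int_{\mathcal{O}_x B(y,R)} e^{-C_{F,\xi}(x,y)}\,d\mu_y(\xi)$, use the cocycle estimate on shadows (the analogue of Proposition 2.9(1) for general $x,y \in \text{Hull}(\Lambda_\Gamma)$ rather than $o,\gamma o$, which is legitimate since $\tilde F$ is bounded on $\tilde p^{-1}(\text{Hull}(\Lambda_\Gamma))$) to pull out the factor $e^{\int_x^y \tilde F}$, and reduce everything to a two-sided uniform bound on $\mu_y(\mathcal{O}_x B(y,R))$. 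The upper bound via $\Gamma$-equivariance and compactness of $\text{Hull}(\Lambda_\Gamma)/\Gamma$ is clean. That is all exactly as in \cite{PPS15}.

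The one place where your argument has a real gap is the far regime $d(x,y) > D$. The containment $\mathcal{O}_x B(y,R) \supset \mathcal{O}_y B(z,R/2)$ (with $z$ at distance $R/2$ past $y$ on the ray from $x$) is plausible and is in fact true for $R$ large enough, but it is a genuine hyperbolic-trigonometric fact, not an obvious triangle-inequality consequence, and you neither prove nor cite it. One also needs to note that $z$ need not lie in $\text{Hull}(\Lambda_\Gamma)$ (only in its $R/2$-neighborhood, whose quotient is still compact), and that the nonemptiness of $\mathcal{O}_y B(z,R/2) \cap \Lambda_\Gamma$ requires a word, since the open set $\mathcal{O}_y B(z,R/2)$ is roughly a half-sphere seen from $y$, and you need $\Lambda_\Gamma$ to meet it. Finally, for the ``compact case'' via continuity and positivity, you should take the \emph{open} shadow so that $(x,y) \mapsto \mu_y(\mathcal{O}_x B(y,R))$ is lower semicontinuous, which is what you actually need to extract a positive infimum.

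A slightly cleaner variant that avoids the containment entirely (and is closer to the proof in \cite{PPS15}): observe that for $R$ fixed and $d(x,y) \geq R$, the complement $\partial_\infty \mathbb{H}^d \setminus \mathcal{O}_x B(y,R)$ is contained in a visual ball $B_{d_y}(\hat x_y, \epsilon(R))$ centred at the endpoint $\hat x_y$ of the ray from $y$ through $x$, with $\epsilon(R) \to 0$ as $R \to \infty$ (this follows from the hyperbolic law of sines, Lemma A.5 of \cite{LNP19}). It then suffices to show that for $\epsilon$ small enough one has $\sup_{y}\sup_{\xi} \mu_y(B_{d_y}(\xi,\epsilon)) < \inf_y \|\mu_y\|$, with $y$ ranging over a fundamental domain; this is a compactness statement which only uses that $\Gamma$ is non-elementary, so $\Lambda_\Gamma = \text{supp}\,\mu_y$ is not reduced to a point. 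This is a little less work than bootstrapping through a smaller shadow, and it makes transparent exactly where the largeness of $R$ is used.
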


\begin{definition}
The shadow lemma calls for the following hypothesis: we say that the potential $F$ satisfy the regularity assumptions (R) if $F$ is Hölder regular and if $\sup_{\Omega \Gamma} F < \delta_{\Gamma,F}$. 
\end{definition}

 \begin{remark}
By Lemma 3.3 in \cite{PPS15}, we see that we can construct potentials satisfying (R) as follow: choose some potential $F_0$ satisfying (R) (for example, the constant potential) and then choose any Holder map $E : T^1 M \rightarrow \mathbb{R}$ satisfying $2 \sup_{\Omega \Gamma} |E| < \delta_{\Gamma,F} - \sup_{\Omega {\Gamma}} F$. Then $F := F_0 + E$ satisfies the assumption (R). A similar assumption is introduced in \cite{GS14}.
 \end{remark}

The point of the assumption (R) is to ensure that the Patterson-Sullivan densities exhibit some regularity. This is possible because we have a tight control over the geometry of shadows.

\begin{lemma}
Let $\Gamma \subset Iso^+(\mathbb{H}^d)$ be convex-cocompact, and let $F : T^1 ({\mathbb{H}^d/\Gamma}) \rightarrow \mathbb{R}$ satisfy the regularity assumptions (R). Let $\delta_{reg} \in (0,1)$ such that $\delta_{reg} < \delta_{\Gamma,F} - \sup_{\Omega \Gamma} F$. Let $\mu$ denote some $(\Gamma,F)$-Patterson-Sullivan density. Then
$$ \exists C>0, \ \forall \xi \in \partial_\infty \mathbb{H}^{d}, \ \forall r>0, \  \mu(B(\xi,r)) \leq C r^{\delta_{reg}} ,$$
where the ball is in the sense of some visual distance.
\end{lemma}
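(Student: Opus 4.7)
My plan is to combine the Shadow Lemma with the convex-cocompactness of $\Gamma$ to reduce balls in the ideal boundary to shadows of orbit points, on which $\mu$ is controlled.

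First I would reduce to the case $\xi \in \Lambda_\Gamma$: since $\mu$ is supported on $\Lambda_\Gamma$, if $\xi \notin \Lambda_\Gamma$ then either $B(\xi,r)$ misses $\Lambda_\Gamma$ (giving $\mu(B(\xi,r)) = 0$) or $r$ is bounded below by a constant, in which case the inequality is trivial as $\mu$ is finite. I would also only need to treat $r$ small, as the finiteness of $\mu$ handles the rest.

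Next, I would use the following geometric dictionary between visual balls and shadows. Given $\xi \in \Lambda_\Gamma$ and small $r > 0$, set $t := -\ln r$. The geodesic ray $[o, \xi)$ lies in $\text{Hull}(\Lambda_\Gamma)$ because $o \in \text{Hull}(\Lambda_\Gamma)$ and the convex hull is closed, so the point $y \in [o, \xi)$ with $d(o,y) = t$ lies in the hull. By convex-cocompactness, there exists a constant $D > 0$ (independent of $\xi$ and $r$) and $\gamma \in \Gamma$ with $d(\gamma o, y) \leq D$. A standard estimate in hyperbolic geometry shows that for $R$ large enough (depending only on $D$), the shadow $\mathcal{O}_o B(\gamma o, R)$ contains the visual ball $B(\xi, r)$, simply because the shadow of a ball of radius $R$ at hyperbolic distance $t + O(1)$ from $o$ around the direction $\xi$ has visual radius $\asymp e^{-t} = r$ (with implicit constant controlled by $R$).

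Then I would apply the Shadow Lemma (Proposition~2.11) to bound
\[
\mu\bigl(B(\xi, r)\bigr) \leq \mu\bigl(\mathcal{O}_o B(\gamma o, R)\bigr) \leq C\, e^{\int_o^{\gamma o} (\tilde F - \delta_{\Gamma,F})}.
\]
Since $[o, \gamma o] \subset \text{Hull}(\Lambda_\Gamma)$ projects into $\Omega\Gamma$, the line integral satisfies $\int_o^{\gamma o} \tilde F \leq d(o, \gamma o) \sup_{\Omega\Gamma} F$, and the bound $d(o, \gamma o) = t + O(1) = -\ln r + O(1)$ together with the strict inequality $\sup_{\Omega\Gamma} F < \delta_{\Gamma,F}$ (assumption (R)) yields
\[
\mu\bigl(B(\xi, r)\bigr) \leq C'\, r^{\delta_{\Gamma,F} - \sup_{\Omega\Gamma} F} \leq C''\, r^{\delta_{reg}}.
\]
The only step that requires any real care is the geometric lemma relating visual balls to shadows of orbit balls at the right depth; all the other ingredients (convex-cocompactness, the Shadow Lemma, and positivity of the exponent $\delta_{\Gamma,F} - \sup_{\Omega\Gamma} F$) are given.
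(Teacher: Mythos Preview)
Your approach is essentially the same as the paper's: reduce to $\xi\in\Lambda_\Gamma$, enclose the visual ball in a shadow at depth $\asymp -\ln r$, apply the Shadow Lemma, and bound the line integral using $\sup_{\Omega\Gamma}F<\delta_{\Gamma,F}$. Two small remarks. First, your reduction to $\xi\in\Lambda_\Gamma$ is not quite right: when $\xi\notin\Lambda_\Gamma$ but $B(\xi,r)$ meets $\Lambda_\Gamma$, the radius $r$ need not be bounded below by a \emph{uniform} constant (think of $\xi$ approaching $\Lambda_\Gamma$); the paper instead picks $\tilde\xi\in B(\xi,r)\cap\Lambda_\Gamma$ and uses $B(\xi,r)\subset B(\tilde\xi,2r)$. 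Second, your detour through an orbit point $\gamma o$ is unnecessary: the Shadow Lemma as stated (Proposition~2.11) applies to any $x,y\in\text{Hull}(\Lambda_\Gamma)$, so the paper simply takes $y$ on the ray $[o,\xi)$ itself and applies the lemma directly, avoiding the appeal to cocompactness at this step.
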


\begin{proof}

First of all, since for all $p,q \in \mathbb{H}^d$, $\xi \in \partial_\infty \mathbb{H}^d \mapsto e^{C_{\xi}(p,q)}$ is continuous and since the ideal boundary is compact, we can easily reduce our statement to the case where $\mu$ is a Patterson-Sullivan density based on the center of the ball $o$. Moreover, since all the visual distances are equivalent, one can suppose that we are working for the visual distance based at $o$ too. Finally, since the support of $\mu_o$ is $\Lambda_\Gamma$, we can suppose without loss of generality that $B(\xi,r) \cap \Lambda_\Gamma \neq \emptyset$. Since in this case there exists some $\tilde{\xi} \in \Lambda_\Gamma$ such that $B(\xi,r) \subset B(\tilde{\xi},2r)$, we may further suppose without loss of generality that $\xi \in \Lambda_\Gamma$. \\

Now fix $\xi \in \Lambda_\Gamma$ and $r>0$. Let $x \in \text{Hull}(\Lambda_\Gamma)$ lay in the ray starting from $o$ and ending at $\xi$. Let $\rho \in [0,1]$, let $y \in \mathbb{H}^d$ such that $[o,y]$ is tangent to the sphere $S(x,\rho)$, and note $\eta$ the ending of the ray starting from $o$ and going through $y$. The hyperbolic law of sine (see Lemma A.4 and A.5 in \cite{LNP19}) allows us to compute directly:
$$ d_o(\xi,\eta) = \frac{1}{2} \cdot \frac{\sinh(\rho)}{\sinh(d(o,x))} .$$
It follows that there exists $C>0$ such that for all $\xi \in \Lambda_\Gamma$, for all $r>0$, there exists $x \in \text{Hull}(\Lambda_\Gamma)$ such that $e^{-d(o,x)} \leq C r$ and $B_o(\xi,r) \subset \mathcal{O}_o B(x,1) $. 
The desired bound follows from the shadow lemma, since the geodesic segment joining $o$ and $x$ lays in $\text{Hull}(\Lambda_\Gamma)$. \end{proof}

The regularity of the Patterson-Sullivan densities is going to allow us to state a second version of the contraction lemma. First, let us introduce a bit of notations. We fix, for all the duration of the paper, a large enough constant $C_{\Gamma}>0$. For $\gamma \in \Gamma$, we define $\kappa(\gamma) := d(o,\gamma o)$, $r_\gamma := e^{-\kappa(\gamma)}$ and $B_\gamma := \mathcal{O}_o B(\gamma o,C_\Gamma)$. By the hyperbolic law of sine, the radius of $B_\gamma$ is $\sinh(C_\Gamma)/\sinh(r_\gamma)$ (Lemma A.5 in \cite{LNP19}.) If $C_\Gamma$ is chosen large enough and when $\kappa(\gamma)$ is large, we get a radius of $\sim e^{C_\Gamma} r_\gamma \geq r_\gamma$. We have the following covering result.
\begin{lemma}[\cite{LNP19}, Lemma A.8]
Define $r_n := e^{-4 C_\Gamma n}$, and let $ S_n := \{ \gamma \in \Gamma \ , e^{-2 C_\Gamma} r_n \leq r_\gamma < r_n \}$. For all $n \geq 1$, the family $\{ B_\gamma \}_{\gamma \in S_n}$ cover $\Lambda_\Gamma$. Moreover, there exists $C>0$ such that:
$$ \forall n, \forall \xi \in \Lambda_\Gamma, \ \#\{ \gamma \in S_n \ , \ \xi \in B_\gamma \} \leq C .$$
\end{lemma}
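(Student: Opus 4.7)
The plan is to exploit convex cocompactness of $\Gamma$ in two complementary ways. For the covering, I would approximate any deep point of $\text{Hull}(\Lambda_\Gamma)$ by an orbit point up to bounded error. For the multiplicity bound, I would note that those $\gamma \in S_n$ for which $\xi \in B_\gamma$ must satisfy $\gamma o$ lying in a region of uniformly bounded diameter, and then invoke the uniform discreteness of the orbit $\Gamma \cdot o$. As a preliminary, observe that $\gamma \in S_n$ is equivalent to $4 C_\Gamma n < \kappa(\gamma) \leq 4 C_\Gamma n + 2 C_\Gamma$, so $S_n$ consists exactly of the orbit points lying in a hyperbolic annulus of fixed width $2 C_\Gamma$ at radius roughly $4 C_\Gamma n$.

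For the covering, given $\xi \in \Lambda_\Gamma$, I consider the geodesic ray from $o$ to $\xi$, which lies in $\text{Hull}(\Lambda_\Gamma)$ since $o \in \text{Hull}(\Lambda_\Gamma)$ and $\xi \in \Lambda_\Gamma$. Picking $x_n$ on this ray with $d(o, x_n) = 4 C_\Gamma n + C_\Gamma$, convex cocompactness gives a constant $D > 0$ depending only on $\Gamma$ and some $\gamma_n \in \Gamma$ with $d(\gamma_n o, x_n) \leq D$. Provided $C_\Gamma$ was chosen larger than $D$ from the start, I obtain $x_n \in B(\gamma_n o, C_\Gamma)$, hence $\xi \in B_{\gamma_n}$; and the triangle inequality yields $|\kappa(\gamma_n) - (4 C_\Gamma n + C_\Gamma)| \leq D < C_\Gamma$, placing $\gamma_n$ in $S_n$.

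For bounded multiplicity, I fix $\xi \in \Lambda_\Gamma$ and $n \geq 1$, and take any $\gamma \in S_n$ with $\xi \in B_\gamma$. The shadow condition produces a point $y_\gamma$ on the ray from $o$ to $\xi$ with $d(y_\gamma, \gamma o) \leq C_\Gamma$, and the triangle inequality forces $d(o, y_\gamma) \in (4 C_\Gamma n - C_\Gamma, 4 C_\Gamma n + 3 C_\Gamma]$. Hence all such $y_\gamma$ lie in a fixed subsegment of the ray of length at most $4 C_\Gamma$, so all corresponding orbit points $\gamma o$ lie in a set of diameter at most $6 C_\Gamma$ (depending only on $C_\Gamma$). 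Since $\Gamma$ is discrete and torsion-free, the orbit $\Gamma \cdot o$ is uniformly discrete in $\mathbb{H}^d$, with minimum displacement $\delta_0 := \min_{\gamma \ne e} d(o, \gamma o) > 0$. A standard volume-packing argument in hyperbolic space then bounds the number of orbit points in a region of fixed diameter by a constant depending only on $\Gamma$ and $C_\Gamma$, which is the desired $C$.

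The only subtle point is the initial choice of $C_\Gamma$: it must be taken larger than the cocompactness radius of $\text{Hull}(\Lambda_\Gamma)/\Gamma$ based at $o$. The paper's blanket assumption that $C_\Gamma$ be \emph{large enough} absorbs this without extra cost, so I do not anticipate any essential obstacle; the argument reduces to pairing convex cocompactness with elementary hyperbolic geometry.
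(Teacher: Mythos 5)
Your proposal is correct, and it uses the standard approach for this kind of statement: approximate deep points of the hull along the ray to $\xi$ by orbit points (covering), and combine the shadow condition with uniform discreteness of the orbit and a hyperbolic volume-packing bound (multiplicity). The paper itself only cites this lemma from \cite{LNP19} and does not reprove it, and your argument is essentially the one that reference relies on.

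One small nuance worth flagging: under the paper's literal definition of $\text{Hull}(\Lambda_\Gamma)$ as the union of bi-infinite geodesics with both endpoints in $\Lambda_\Gamma$, the ray $[o,\xi)$ need not lie \emph{inside} this set merely because $o$ lies in it and $\xi \in \Lambda_\Gamma$; $o$ may sit on a geodesic whose endpoints are both different from $\xi$. What is true, and is all you need, is that $[o,\xi)$ lies in the geodesic convex hull of $\Lambda_\Gamma$, which stays within a uniformly bounded Hausdorff distance of $\text{Hull}(\Lambda_\Gamma)$, so each $x_n$ is still within bounded distance of the orbit $\Gamma o$; the rest of your covering argument then goes through unchanged, after enlarging $C_\Gamma$ if necessary. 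The multiplicity part is clean: the shadow condition together with $\gamma \in S_n$ confines the relevant $y_\gamma$ to a segment of length $\le 4C_\Gamma$ on the ray, hence the corresponding $\gamma o$ to a region of diameter $\le 6C_\Gamma$, and the packing bound using $\delta_0 = \min_{\gamma\ne e} d(o,\gamma o) > 0$ (valid since $\Gamma$ is discrete and torsion-free) gives a constant depending only on $\Gamma$, $C_\Gamma$, and $d$.
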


Now, we are ready to state our second contraction lemma. Since the potential is not supposed bounded, a lot of technical bounds will only be achieved by working on the limit set or on its convex hull. One of the main goal of the second contraction lemma is then to replace $x_\gamma^m$ by a point $\eta_\gamma$ lying in the limit set.

\begin{lemma}[Second contraction lemma]
Let $\Gamma \subset \text{Iso}^+(\mathbb{H}^d)$ be convex-cocompact. Let $F : T^1( \mathbb{H}^d/\Gamma ) \rightarrow \mathbb{R}$ be a potential satisfying (R). Denote by $\mu_o \in \mathcal{P}(\Lambda_\Gamma)$ the associated Patterson-Sullivan density at $o$. Then there exists a family of points $(\eta_\gamma)_{\gamma \in \Gamma}$ such that, for any $\gamma \in \Gamma$ with large enough $\kappa(\gamma)$, we have $\eta_\gamma \in \Lambda_\Gamma \cap B_\gamma$, and moreover:

\begin{enumerate}
    \item there exists $c>0$ independant of $\gamma$ such that $d_o(\xi,\eta_\gamma) \geq r_\gamma/2 \Rightarrow d_o(\gamma^{-1} \xi, \gamma^{-1} \eta_\gamma) \geq c$,
    \item for all $\varepsilon_0 \in (0,\delta_{reg})$, there exists $C$ independant of $\gamma$ such that:
    $$ \int_{\Lambda_\Gamma} d_o(\gamma(\xi),\eta_\gamma)^{\varepsilon_0} d\mu_o(\xi) \leq C r_\gamma^{ \varepsilon_0 } .$$
\end{enumerate}

\end{lemma}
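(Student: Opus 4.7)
The first task is to select $\eta_\gamma$. By the Shadow Lemma (Proposition 2.10), applied between $o$ and $\gamma o$ (both of which lie in $\text{Hull}(\Lambda_\Gamma)$ since $o$ does and $\Gamma$ preserves the hull), we have $\mu_o(B_\gamma) > 0$; since $\text{supp}(\mu_o) = \Lambda_\Gamma$, the intersection $B_\gamma \cap \Lambda_\Gamma$ is nonempty. I would pick any $\eta_\gamma$ in it; one automatically has $d_o(\eta_\gamma, x_\gamma^m) \leq C r_\gamma$ with $C$ depending only on $C_\Gamma$ (the shadow has visual radius $\sim r_\gamma$), and consequently the Euclidean distance satisfies $|\eta_\gamma - \gamma o| \sim r_\gamma$, since $1 - |\gamma o| \sim r_\gamma$ too.

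For Property (1), my plan is to work in the ball model and use the exact Möbius conformal derivative identity
$$d_o(\gamma^{-1}\xi, \gamma^{-1}\eta_\gamma)^2 \;=\; |(\gamma^{-1})'(\xi)|_o \, |(\gamma^{-1})'(\eta_\gamma)|_o \, d_o(\xi,\eta_\gamma)^2,$$
where the conformal factor is the Poisson kernel $|(\gamma^{-1})'(\zeta)|_o = (1-|\gamma o|^2)/|\zeta - \gamma o|^2$. The estimates above give $|(\gamma^{-1})'(\eta_\gamma)|_o \sim 1/r_\gamma$ and, more generally, $|(\gamma^{-1})'(\xi)|_o \sim r_\gamma/|\xi - \gamma o|^2$, so the identity simplifies to $d_o(\gamma^{-1}\xi, \gamma^{-1}\eta_\gamma) \gtrsim d_o(\xi,\eta_\gamma)/|\xi - \gamma o|$. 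A case split then finishes: if $d_o(\xi, x_\gamma^m) \leq C' r_\gamma$ then $|\xi - \gamma o| \lesssim r_\gamma$ and the hypothesis $d_o(\xi,\eta_\gamma) \geq r_\gamma/2$ directly yields a uniform lower bound; if $d_o(\xi, x_\gamma^m) > C' r_\gamma$ with $C'$ large enough relative to $C$, then $|\xi - \gamma o| \asymp d_o(\xi, x_\gamma^m)$ while $d_o(\xi, \eta_\gamma) \geq d_o(\xi, x_\gamma^m) - C r_\gamma \geq d_o(\xi, x_\gamma^m)/2$, again giving a constant ratio.

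For Property (2), I would write $d_o(\gamma\xi, \eta_\gamma) \leq d_o(\gamma\xi, x_\gamma^m) + C r_\gamma$ and apply the analogous identity to $\gamma$ itself: letting $y_\gamma^m := \gamma^{-1}o/|\gamma^{-1}o|$ be the singular direction of the Poisson kernel of $\gamma$, one derives $d_o(\gamma\xi, x_\gamma^m) \lesssim r_\gamma/d_o(\xi, y_\gamma^m)$ for $\xi$ away from $y_\gamma^m$. I would then dyadically decompose $\Lambda_\Gamma$ into annuli $A_k := \{\xi \in \Lambda_\Gamma : 2^{-k-1} \leq d_o(\xi, y_\gamma^m) < 2^{-k}\}$. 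By Lemma 2.15, $\mu_o(A_k) \lesssim 2^{-k\delta_{reg}}$, and the integrand on $A_k$ is at most $\min(1, r_\gamma 2^k)^{\varepsilon_0}$. The indices with $r_\gamma 2^k \leq 1$ contribute a geometric series of ratio $2^{\varepsilon_0 - \delta_{reg}} < 1$ (using $\varepsilon_0 < \delta_{reg}$), summing to $\lesssim r_\gamma^{\varepsilon_0}$; the remaining indices contribute $\lesssim r_\gamma^{\delta_{reg}} \leq r_\gamma^{\varepsilon_0}$.

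The most delicate point, I anticipate, will be checking that the estimates $|\xi - \gamma o| \asymp \max(r_\gamma, d_o(\xi, x_\gamma^m))$ and its analog for $|\xi - \gamma^{-1}o|$ behave well when $\gamma$ has a nontrivial rotation part $\Omega$ in the normal form $\tau_{\gamma o}\Omega$ of Lemma 2.1: then $y_\gamma^m = -\Omega^{-1}(x_\gamma^m)$ is not antipodal to $x_\gamma^m$ in general, so the comparison between Euclidean distances to $\gamma^{\pm 1}o$ and visual distances to $x_\gamma^m$, $y_\gamma^m$ should be derived directly from the Poisson-kernel formula to produce bounds that are uniform in $\gamma$.
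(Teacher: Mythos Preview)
Your argument is correct, but it follows a genuinely different route from the paper's.

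\textbf{Choice of $\eta_\gamma$ and property (1).} The paper does \emph{not} take an arbitrary point of $B_\gamma\cap\Lambda_\Gamma$. Instead it builds $\eta_\gamma$ as $\gamma(\widehat\eta_\gamma)$, where $\widehat\eta_\gamma\in\Lambda_\Gamma$ is chosen at definite distance (at least $\mathrm{diam}(\Lambda_\Gamma)/3$) from the repelling set $A_\gamma$ furnished by part~2 of the First contraction lemma. This immediately forces $d_o(\eta_\gamma,x_\gamma^m)\le c\,r_\gamma$ with $c$ as small as desired; then the contrapositive of the First contraction lemma shows that $d_o(\xi,\eta_\gamma)\ge r_\gamma/2$ implies $\gamma^{-1}\xi\in A_\gamma$, whence $d_o(\gamma^{-1}\xi,\gamma^{-1}\eta_\gamma)=d_o(\gamma^{-1}\xi,\widehat\eta_\gamma)>\mathrm{diam}(\Lambda_\Gamma)/3$. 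No Poisson kernel is invoked. Your approach works because the exact conformal identity $d_o(\gamma^{-1}\xi,\gamma^{-1}\eta)=\frac{1-|\gamma o|^2}{|\xi-\gamma o|\,|\eta-\gamma o|}\,d_o(\xi,\eta)$ absorbs all the geometry; in effect you are reproving the contraction lemma quantitatively. This buys you freedom in the choice of $\eta_\gamma$ (any point of $B_\gamma\cap\Lambda_\Gamma$ suffices), at the cost of a slightly heavier computation.

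\textbf{Property (2).} The paper's argument here is much shorter than your dyadic one: it just splits the integral as $\int_{A_\gamma}+\int_{A_\gamma^c}$. On $A_\gamma^c$ one has $d_o(\gamma\xi,x_\gamma^m)\le c\,r_\gamma$ directly from the First contraction lemma, and $\mu_o(A_\gamma)\lesssim r_\gamma^{\delta_{\mathrm{reg}}}$ by the regularity Lemma~2.15, so the whole integral is $\lesssim r_\gamma^{\varepsilon_0}$ in one line. Your dyadic decomposition around $y_\gamma^m$ and the bound $d_o(\gamma\xi,x_\gamma^m)\lesssim r_\gamma/d_o(\xi,y_\gamma^m)$ are correct (indeed $\gamma^{-1}(x_\gamma^m)=-y_\gamma^m$ is the antipode, so the conformal identity gives exactly this), but they extract more precision than the lemma actually requires. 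The paper's single-scale split is enough because all that is needed is a bound of size $r_\gamma^{\varepsilon_0}$, not a full decay profile.
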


\begin{proof}
Recalling that the visual distance and the euclidean distance are equivalent on the unit sphere, if we forget about $\eta_\gamma$ and replace it by $x_\gamma^m$ instead, then the first point is a direct corollary of the first contraction lemma. We just have to check two points: first, since $\Gamma$ is discrete without torsion, there exists $c_0 >0$ such that for all $\gamma \in \Gamma \setminus \{Id\}$, $d(o,\gamma o) > c_0$. The second point is to check that the orders of magnitude of $r_\gamma$ and $\varepsilon_\gamma$ (quantity introduced in the first contraction lemma) are compatible. This can be checked using an explicit formula relating the hyperbolic distance with the euclidean one in the ball model: $r_\gamma = e^{-\kappa(\gamma)} = \frac{1-|\gamma(o)|}{1+|\gamma(o)|} \sim \varepsilon_\gamma/2$ (see \cite{Ra06}, exercise 4.5.1). We even have a large security gap for the first statement to hold (recall that the critical scale is $\sim \varepsilon_\gamma^2$). \\

We will use the strong contraction properties of $\Gamma$ to construct a point $\eta_\gamma \in \Lambda_\Gamma$ very close to $x_\gamma^m$. Since $\Gamma$ is convex-cocompact, we know in particular that $\text{diam}(\Lambda_\Gamma)>0$. Now let $\gamma \in \Gamma$ such that $\kappa(\gamma)$ is large enough. The first contraction lemma says that there exists $A_\gamma \subset \partial_\infty \mathbb{H}^d$ with $\text{diam} (A_\gamma) \leq C r_\gamma$ such that $\text{diam}(\gamma(A_\gamma^c)) \leq r_\gamma/10$. It follows that we can find a point $\widehat{\eta}_\gamma \in \Lambda_\Gamma$ such that $d(A_\gamma, \widehat{\eta}_\gamma)> \text{diam}(\Lambda_\Gamma)/3$. Fixing $\eta_\gamma := \gamma(\widehat{\eta}_\gamma)$ gives us a point satisfying $\eta_\gamma \in \Lambda_\Gamma$ and $d_o(\eta_\gamma,x_\gamma^m) \lesssim r_\gamma^{2}$. Hence $\eta_\gamma \in B_\gamma$, and moreover, any point $\xi$ satisfying $d_o(\xi,\eta_\gamma) \geq r_\gamma/2$ will satisfy $\gamma^{-1}(\xi) \in A_\gamma$, and so $d_o(\gamma^{-1}(\xi),\gamma^{-1}(\eta_\gamma)) \geq \text{diam}(\Lambda_\Gamma)/3$. This proves the first point. \\

For the second point: since the set $A_\gamma \subset \partial_\infty \mathbb{H}^{d}$ is of diameter $\leq C r_\gamma$ and satisfy that, for all $\xi \notin A_\gamma$, $d_o(\gamma(\xi),x_\gamma^m) \leq r_\gamma/10$, the upper regularity of $\mu_0$ yields
$$ \int_{\Lambda_{\Gamma}} d_o(\gamma(\xi),x_\gamma^m)^{\varepsilon_0} d\mu_o(\xi) \leq C \mu_o(A_\gamma) + \int_{A_\gamma^c} C r_\gamma^{\varepsilon_0} d\mu_o \leq C r_\gamma^{\varepsilon_0} .$$
The desired bound follows from $d_o(x_\gamma^m,\eta_\gamma) \lesssim r_\gamma$, using the triangle inequality.\end{proof}

\section{Patterson-Sullivan densities are stationary measures}

\subsection{Stationary measures}

In this subsection we define stationary measures and state our main theorem. 
\begin{definition}
Let $\nu \in \mathcal{P}(\Gamma)$ be a probability measure on $\Gamma \subset SO(n,1)$. Let $\mu \in \mathcal{P}(\partial_\infty \mathbb{H}^d)$. We say that $\mu$ is $\nu$-stationary if:
$$ \mu = \nu * \mu := \int_\Gamma \gamma_* \mu \ d\nu(\gamma) .$$
Moreover, we say that the measure $\nu$ has exponential moment if there exists $\varepsilon>0$ such that $ \int_\Gamma \|\gamma\|^{\varepsilon} d\nu(\gamma) < \infty$. Finally, we denote by $\Gamma_\nu$ the subgroup of $\Gamma$ generated by the support of $\nu$.
\end{definition}

\begin{theorem}
Let $\Gamma \subset \text{Iso}^+(\mathbb{H}^d)$ be a convex-cocompact group, and let $F : T^1(\mathbb{H}^d/\Gamma) \rightarrow \mathbb{R}$ be a potential on the unit tangent bundle satisfying (R). Let $x \in \mathbb{H}^d$ and let $\mu_x \in \mathcal{P}(\Lambda_\Gamma)$ denotes the $(\Gamma,F)$ Patterson-Sullivan density from $x$. Then there exists $\nu \in \mathcal{P}(\Gamma)$ with exponential moment (seen as a random walk in $SO(d,1)$) such that $\mu$ is $\nu$-stationary and such that $\Gamma_\nu = \Gamma$.
\end{theorem}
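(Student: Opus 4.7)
The plan is to write $\nu = \sum_{\gamma \in \Gamma} c_\gamma \delta_\gamma$ and solve the stationarity equation directly, using the quasi-invariance of Patterson--Sullivan densities. Since $\gamma_* \mu_o = \mu_{\gamma o}$ and $d\mu_{\gamma o}/d\mu_o(\xi) = e^{-C_{F-\delta_{\Gamma,F},\xi}(\gamma o,\,o)}$ (Theorem~2.7), the identity $\nu * \mu_o = \mu_o$ is equivalent to the scalar equation
$$\sum_{\gamma \in \Gamma} c_\gamma \, e^{-C_{F-\delta_{\Gamma,F},\xi}(\gamma o,\,o)} = 1 \quad \text{for } \mu_o\text{-a.e. } \xi \in \Lambda_\Gamma.$$
Combining Proposition~2.10(1) and the Shadow Lemma, the summand is comparable on the shadow $B_\gamma$ to $1/\mu_o(B_\gamma)$, which singles out the heuristic choice $c_\gamma \propto \mu_o(B_\gamma)$: each shadow then contributes roughly $1$ to the sum, and the bounded-multiplicity cover $\{B_\gamma\}_{\gamma \in S_n}$ (Lemma~2.12) makes the total of order one.

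Concretely, I would start with a crude approximation $\nu^{(0)}$ supported on a shell $S_{n_0}$ for some large fixed $n_0$, with $c^{(0)}_\gamma \asymp \mu_o(B_\gamma)$; using Proposition~2.10 together with a bound on the cocycle outside the shadow (where $|C_{F-\delta_{\Gamma,F},\xi}(\gamma o,o)|$ stays bounded), one checks that $\nu^{(0)} * \mu_o$ has density with respect to $\mu_o$ bounded above and below. Upgrading this approximate stationarity into exact equality is the heart of the argument, and this is where the second contraction lemma (Lemma~2.14) plays the key role: each $\gamma$ with $\kappa(\gamma)$ large sends $\mu_o$ to a measure essentially concentrated near $\eta_\gamma \in B_\gamma$, with tail controlled by $\int d_o(\gamma \xi, \eta_\gamma)^{\varepsilon_0} d\mu_o \lesssim r_\gamma^{\varepsilon_0}$. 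Deep elements therefore act as localized \say{Dirac-like} correction masses, and I would run a telescoping multi-scale construction in which the discrepancy $\nu^{(k)} * \mu_o - \mu_o$ at scale $n$ is decomposed shadow-by-shadow and neutralized by adjusting the coefficients of a few deeper elements, keeping track of Hölder-type losses via Proposition~2.10(2).

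For the moment condition, Lemma~2.11 gives $c_\gamma \lesssim \mu_o(B_\gamma) \lesssim r_\gamma^{\delta_{\mathrm{reg}}}$ and Remark~2.5 gives $\|\gamma\| \lesssim r_\gamma^{-1}$, so each term contributes $c_\gamma \|\gamma\|^\varepsilon \lesssim r_\gamma^{\delta_{\mathrm{reg}}-\varepsilon}$. If the correction uses $O(1)$ elements per shell $S_n$ (which is exactly what the strong contraction buys us, since one deep $\gamma$ absorbs an entire local error), the sum converges for any $\varepsilon < \delta_{\mathrm{reg}}$. The generation condition $\Gamma_\nu = \Gamma$ is then ensured by mixing $\nu$ with a small probability measure supported on a fixed symmetric generating set of $\Gamma$, a bounded perturbation easily absorbed into the construction.

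The main obstacle, and the reason why Lemma~A.12 of \cite{LNP19} does not transfer, is the oscillation introduced by the potential $F$: in the $F=0$ setting the Radon--Nikodym factors are essentially constant on each shadow (controlled by the hyperbolic Jacobian alone), while for general $F$ they depend on the Gibbs cocycle, whose oscillation on $B_\gamma$ is only Hölder in $\xi$. Verifying convergence of the correction series --- i.e., that corrections introduced at depth $n+1$ do not reopen the error at depths $\leq n$ --- will therefore require the fine quantitative contraction analysis built up in Section~2, in particular the combination of the first contraction lemma (to control the geometry of $\gamma$ on the sphere) with the second (to place $\eta_\gamma$ inside $\Lambda_\Gamma$ and dominate tails in $\mu_o$-integral).
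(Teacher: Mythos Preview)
Your proposal is in the same spirit as the paper's proof: both reduce stationarity to writing $1$ as a positive combination $\sum_\gamma c_\gamma f_\gamma$ with $f_\gamma(\xi)=e^{C_{F,\xi}(o,\gamma o)}$, and both build the $c_\gamma$ by a multi-scale iterative scheme over the shells $S_n$. The paper makes this precise via an \emph{approximation operator} $P_n R := \sum_{\gamma\in S_n} R(\eta_\gamma)\,r_\gamma^F f_\gamma$ and an \emph{approximation lemma} showing $A^{-1}R \le P_{n+1}R \le AR$ whenever $R$ satisfies a local H\"older bound at scale $r_{n+1}$ \emph{and} a global mild-variation bound $R(\xi)/R(\eta)\lesssim d_o(\xi,\eta)^{\varepsilon_0}r_n^{-\varepsilon_0}$. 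One then peels: $R_0=1$, $R_{n+1}=R_n-\tfrac{\beta}{A}P_{n+1}R_n$, and an induction (Lemma~3.10) checks that $R_n$ keeps both properties, so $R_n\le(1-\beta/A^2)^n\to0$.

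There is, however, a genuine gap in your exponential-moment argument. You claim the correction uses $O(1)$ elements per shell; in the paper it uses \emph{all} of $S_n$, and the moment does not come from sparsity. The coefficient of $\gamma\in S_n$ is $\nu(\gamma)=\tfrac{\beta}{A}R_{n-1}(\eta_\gamma)\,r_\gamma^F$, and summability of $\sum_\gamma \nu(\gamma)\|\gamma\|^\varepsilon$ holds because $R_{n-1}\le(1-\beta/A^2)^{n-1}$ decays geometrically, $\sum_{\gamma\in S_n} r_\gamma^F\simeq\sum_{\gamma\in S_n}\mu_o(B_\gamma)\lesssim 1$ by bounded multiplicity, and $\|\gamma\|\lesssim r_n^{-1}$. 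Your bound $c_\gamma\lesssim r_\gamma^{\delta_{\mathrm{reg}}}$ alone, summed over a shell of cardinality potentially $\sim r_n^{-\delta}$, does not converge. Relatedly, the concern you raise at the end --- that depth-$(n{+}1)$ corrections might reopen shallower errors --- is exactly what the two-hypothesis induction (local H\"older plus global mild variation of $R_n$) is designed to rule out; you name the issue but not its resolution.

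Finally, mixing $\nu$ with a small measure on a generating set after the fact destroys the stationarity equation you have just solved. The paper instead takes $C_\Gamma$ large enough that (a translate of) the ball $\{\gamma:d(o,\gamma o)\le C_\Gamma/2\}$ lies inside $S_1$; since this ball generates $\Gamma$, no post-hoc perturbation is needed.
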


\begin{remark}
This result for $d=2$ was announced without proof by Jialun Li in \cite{Li18} (see remark 1.9). A proof in the case of constant potentials is done in the appendix of \cite{LNP19}. Our strategy is inspired by this appendix. For more details on stationary measures, see the references therein.
\end{remark}

First of all, a direct computation allows us to see that if $(\mu_x)_{x \in \mathbb{H}^d}$ are $(\Gamma,F)$ Patterson-Sullivan densities, then, for any $\eta \in SO_0(d,1)$ , $ (\eta_* \mu_{\eta^{-1} x})_{x \in \mathbb{H}^d} $ are $(\eta \Gamma \eta^{-1}, \eta_* F)$ Patterson-Sullivan densities. This remark allows us to reduce our theorem to the case where the basepoint $x$ is the center of the ball $o$. Our goal is to find $\nu \in \mathcal{P}(\Gamma)$ such that $\nu * \mu_o = \mu_o$.  Assuming that $F$ is normalized, this can be rewritten as follows:
$$ d\mu_o(\xi) = \sum_\gamma \nu(\gamma) d(\gamma_* \mu_o)(\xi) =  \sum_\gamma \nu(\gamma) e^{C_{F,\xi}(o,\gamma o)} d\mu_o(\xi) .$$
Hence, $\mu_o$ is $\nu$-stationary if
$$ \sum_{\gamma \in \Gamma} \nu(\gamma) f_\gamma = 1 \ \text{ on } \Lambda_\Gamma,$$
where $$f_\gamma(\xi) := e^{C_{F,\xi}(o,\gamma o)}.$$ 

\begin{remark}
Our main goal is to find a way to decompose the constant function $1$ as a sum of $f_\gamma$. Here is the intuition behind our proof. \\

Define $r_\gamma^{-F} := e^{-\int_o^{\gamma o} \tilde{F}} = f_\gamma(x^m_\gamma) \simeq f_\gamma(\eta_\gamma)$. The first thing to notice is that $f_\gamma$ looks like an approximation of unity centered at $x^m_\gamma$. Renormalizing yields the intuitive statement $r_\gamma^{F} f_\gamma \sim \mathbb{1}_{B_\gamma}$. The idea is that this approximation gets better as $\kappa(\gamma)$ becomes large. Once this observation is done, there is a natural \say{n-th approximation} operator that can be defined. For some positive function $R$, one can write:
$$ R \simeq \sum_{\gamma \in S_n} R(\eta_\gamma) \mathbb{1}_{B_\gamma} \simeq \sum_{\gamma \in S_n} R(\eta_\gamma) r_\gamma^{F} f_\gamma =: P_n R.$$
Proving that the operator $P_n$ does a good enough job at approximating some functions $R$ is the content of the \say{approximation lemma} 3.8. In particular, it is proved that, under some assumptions on $R>0$, we have $c R \leq P_n R \leq C R$. \\

The conclusion of the proof is then easy. We fix a constant $\beta>0$ small enough so that $c R \leq  \beta P_n R \leq R$. Then, we define by induction $R_0 = 1$ and $0 < R_{n+1} := R_n - \beta P_{n+1} R_n \leq (1-c) R_n$. By induction, this gives $R_n \leq (1-c)^n$, and hence $1 = R_0 - \lim_n R_n = \sum_{k} (R_{k}-R_{k+1})$ is a decomposition of $1$ as a sum of $f_\gamma$.
\end{remark}

\subsection{The approximation operator}

First, we collect some results on $f_\gamma$ that will allows us to think of it as an approximation of unity around $x_m^\gamma$ with width $r_\gamma$. The first point studies $f_\gamma$ near $x_\gamma^m$, the second point study the decay of $f_\gamma$ away from it, and the last point is a regularity estimate at the scale $r_\gamma$. To quantify this decay, we recall the notion of \emph{potential gap} (or gap map).

\begin{definition}
The following \say{potential gap} $D_F : \mathbb{H}^d \times \partial_\infty \mathbb{H}^d \times \partial_\infty \mathbb{H}^d$ is well defined and continuous:
$$D_{F,x}(\eta,\xi) := \exp \frac{1}{2} \lim_{t \rightarrow \infty} \left( \int_x^{\xi_t} \tilde{F} + \int_{\eta_t}^x \tilde{F} - \int_{\eta_t}^{\xi_t} \tilde{F} \right)$$
where $(\eta_t)$ and $(\xi_t)$ denotes any unit speed geodesic ray converging to $\eta$ and $\xi$.
\end{definition}

Under our assumptions, the gap map (for some fixed $x$) behaves like a distance on $\Lambda_\Gamma$: see \cite{PPS15}, section 3.4 for details. Finally, we denote by $\iota(v) = -v$ the \emph{flip map} on the unit tangent bundle. 

\begin{lemma}[Properties of $f_\gamma$]
There exists $C_{\text{reg}} \geq 2$ (independant of $C_\Gamma$) and $C_0 \geq 1$ (depending on $C_\Gamma)$ such that, for all $\gamma \in \Gamma$:
\begin{enumerate}
\item For all $\xi \in B_\gamma$, $$ r_\gamma^{F} f_\gamma(\xi) \in [e^{-C_0},e^{C_0}] .$$
 \item For all $\xi \in \Lambda_\Gamma$ such that $d_o(\xi,\eta_\gamma) \geq r_\gamma/2$,
$$ C_0^{-1}  D_o(\xi,\eta_\gamma)^{-2}  \leq r_\gamma^{-F \circ \iota} f_\gamma(\xi) \leq C_0 D_o(\xi,\eta_\gamma)^{-2} .$$
 \item For all $\xi,\eta \in \Lambda_\Gamma$ such that $d_o(\xi,\eta) \leq r_\gamma/e^2$,
 $$ \left| f_\gamma(\xi)/f_\gamma(\eta)-1 \right| \leq C_{\text{reg}} \cdot d_o(\xi,\eta)^\alpha r_\gamma^{-\alpha}. $$
 
\end{enumerate}

\end{lemma}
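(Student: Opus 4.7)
My plan would prove the three items in increasing order of difficulty, all via Proposition 2.12. Item (1) is immediate: the hypothesis $\xi \in B_\gamma = \mathcal{O}_o B(\gamma o, C_\Gamma)$ is exactly the shadow condition of Proposition 2.12(1), so $|C_{F,\xi}(o, \gamma o) + \int_o^{\gamma o} \tilde F| \leq C_0$ and exponentiation gives $r_\gamma^F f_\gamma(\xi) \in [e^{-C_0}, e^{C_0}]$. Item (3) is immediate too: the hypothesis $d_o(\xi, \eta) \leq r_\gamma/e^2 = e^{-\kappa(\gamma)-2}$ is precisely the closeness hypothesis of Proposition 2.12(2), yielding $|C_{F,\xi}(o, \gamma o) - C_{F,\eta}(o, \gamma o)| \leq C_0 r_\gamma^{-\alpha} d_o(\xi, \eta)^\alpha$; since this exponent is bounded by $C_0 e^{-2\alpha}$, applying $|e^x - 1| \leq e^{|x|}|x|$ to $f_\gamma(\xi)/f_\gamma(\eta) = \exp(C_{F,\xi}(o,\gamma o) - C_{F,\eta}(o,\gamma o))$ gives the estimate with a constant $C_{\text{reg}}$ built from $C_0$ and $\alpha$ alone, hence independent of $C_\Gamma$.

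The substantive step is item (2). The starting point is the identity, valid for any $z$ on the bi-infinite geodesic from $\xi$ to $\eta_\gamma$,
$$2 \log D_o(\xi, \eta_\gamma) = -C_{F, \eta_\gamma}(o, z) - C_{F \circ \iota, \xi}(o, z),$$
obtained from the definition of the gap map by inserting $z$ as a breakpoint in $\int_{\xi_t}^{\eta_{\gamma,t}} \tilde F$ (valid for $t$ large, since then $z$ lies on the geodesic from $\xi_t$ to $\eta_{\gamma,t}$) and identifying the two resulting differences of line integrals with the Gibbs cocycles of $F$ and $F \circ \iota$ respectively. Combining with $r_\gamma^{-F \circ \iota} f_\gamma(\xi) = \exp(-b + C_{F,\xi}(o,\gamma o))$, where $b := \int_o^{\gamma o} \tilde F \circ \iota$, the task becomes the uniform bound
$$\left| -b + C_{F,\xi}(o, \gamma o) - C_{F, \eta_\gamma}(o, z) - C_{F \circ \iota, \xi}(o, z) \right| = O(1).$$

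I would choose $z$ to be the closest point to $\gamma o$ on the geodesic $(\xi, \eta_\gamma)$. The key geometric claim is $d(z, \gamma o) = O(1)$: since $\eta_\gamma \in B_\gamma$ the ray $(o, \eta_\gamma)$ passes within $C_\Gamma$ of $\gamma o$, and the hypothesis $d_o(\xi, \eta_\gamma) \geq r_\gamma/2$ gives $(\xi | \eta_\gamma)_o \leq \kappa(\gamma) + \log 2$, so a standard thin-triangles argument forces the bi-infinite geodesic $(\xi, \eta_\gamma)$ within $O(1)$ of $\gamma o$. With this $z$, Proposition 2.12(1) applied to $\eta_\gamma \in \mathcal{O}_o B(z, O(1))$ yields $C_{F, \eta_\gamma}(o, z) = -\int_o^z \tilde F + O(1)$; a parallel estimate for $C_{F \circ \iota, \xi}(o, z)$ is available by applying Proposition 2.12(1) to the potential $F \circ \iota$, either with $\xi$ in the direct shadow of $B(z, O(1))$ from $o$ (the regime $d_o(\xi, \eta_\gamma) \lesssim r_\gamma$) or, when $\xi$ is farther from $\eta_\gamma$, via the opposite shadow in which the geodesic from $z$ to $\xi$ passes within $O(1)$ of $o$; both cases give $C_{F \circ \iota, \xi}(o, z) = -\int_o^z \tilde F \circ \iota + O(1)$. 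A parallel treatment of $C_{F,\xi}(o,\gamma o) = C_{F,\xi}(o, z) + C_{F,\xi}(z, \gamma o)$, with $|C_{F,\xi}(z,\gamma o)| = O(1)$ by proximity, reduces everything to line integrals along $[o, z]$; thanks to $\int_o^z \tilde F \circ \iota = b + O(1)$ (since $d(z, \gamma o) = O(1)$ and $\tilde F$ is bounded near the hull), all integrals cancel, leaving an $O(1)$ remainder. The main obstacle is the regime split needed to apply Proposition 2.12(1) to $C_{F \circ \iota, \xi}(o, z)$; a conceptually cleaner alternative, at least for $\xi$ sufficiently far from $\eta_\gamma$, would be to exploit the duality $f_\gamma(\xi)^{-1} = f_{\gamma^{-1}}(\gamma^{-1} \xi)$ combined with the first contraction lemma, reducing that regime of (2) directly to (1) applied to $\gamma^{-1}$.
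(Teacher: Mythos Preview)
Your treatment of items (1) and (3) matches the paper's. For item (2), however, your route is genuinely different from the paper's and contains a slip.

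The paper does not introduce an auxiliary point $z$ on the geodesic $(\xi,\eta_\gamma)$ at all. Instead, it first replaces $r_\gamma^{-F\circ\iota}$ by $e^{C_{F\circ\iota,\eta_\gamma}(o,\gamma o)}$ via Proposition~2.8(1) (since $\eta_\gamma\in B_\gamma$), then manipulates the defining limits directly---adding and subtracting $\int_{(\eta_\gamma)_t}^{\xi_t}\tilde F$---to obtain the closed-form identity
\[
f_\gamma(\xi)\,r_\gamma^{-F\circ\iota}\ \simeq\ \frac{D_{F,\gamma o}(\eta_\gamma,\xi)^2}{D_{F,o}(\eta_\gamma,\xi)^2}.
\]
The result then follows in one stroke: under (R) one has $d_{\gamma o}(\xi,\eta_\gamma)^{c_0}\le D_{F,\gamma o}(\eta_\gamma,\xi)\le 1$, and the second contraction lemma gives $d_{\gamma o}(\xi,\eta_\gamma)=d_o(\gamma^{-1}\xi,\gamma^{-1}\eta_\gamma)\ge c$ whenever $d_o(\xi,\eta_\gamma)\ge r_\gamma/2$. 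No regime split is needed, and the contraction lemma (which you do not invoke) is precisely what absorbs the geometry.

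Your approach via a footpoint $z$ and thin triangles is workable, but your claim that ``both cases give $C_{F\circ\iota,\xi}(o,z)=-\int_o^z\tilde F\circ\iota+O(1)$'' is wrong. In the opposite-shadow regime $\xi\in\mathcal O_z B(o,R)$, Proposition~2.8(1) with base $z$ gives $C_{F\circ\iota,\xi}(z,o)\approx -\int_z^o\tilde F\circ\iota=-\int_o^z\tilde F$, so $C_{F\circ\iota,\xi}(o,z)\approx \int_o^z\tilde F$, which differs from $-\int_o^z\tilde F\circ\iota$ unless $F=F\circ\iota$. The global cancellation can still be made to work, but only because $C_{F,\xi}(o,z)$ \emph{also} changes form in that regime (to $\int_z^o\tilde F$); you must split both cocycles simultaneously, not just the $F\circ\iota$ one. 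As written, the sketch does not do this.
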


\begin{proof}
Recall that $f_\gamma(x_\gamma^m) = r_\gamma^{-F}$: the first point is then a consequence of Proposition 2.8. The third point also directly follows without difficulty. For the second item, recall that $\eta_\gamma \in B_\gamma$, so that by the same argument we get $$ r_\gamma^{-F \circ \iota} \simeq e^{C_{F \circ \iota,\eta_{\gamma}}(o,\gamma o)}. $$
Then, a direct computation yields:
$$ f_\gamma(\xi) r_\gamma^{-F \circ \iota} \simeq \exp \lim_{t \rightarrow \infty} \left( \left( \int_{\gamma o}^{\xi_t} \tilde{F} - \int_o^{\xi_t} \tilde{F}\right) - \left( \int_{(\eta_\gamma)_t}^{o} \tilde{F} - \int_{(\eta_\gamma)_t}^{\gamma o} \tilde{F} \right) \right) $$
$$ = \lim_{t \rightarrow \infty} \exp {\left( \left( - \int_o^{\xi_t} \tilde{F} - \int_{(\eta_\gamma)_t}^o \tilde{F}  + \int_{(\eta_\gamma)_t}^{\xi_t} \right) + \left( \int_{\gamma o}^{\xi_t} \tilde{F} + \int_{(\eta_\gamma)_t}^{\gamma o} \tilde{F} - \int_{(\eta_\gamma)_t}^{\xi_t} \tilde{F} \right) \right)} $$
$$ = \frac{D_{F,\gamma o}(\eta_\gamma,\xi)^2}{D_{F,o}(\eta_\gamma,\xi)^2} .$$
Under our regularity hypothesis (R), and because $\Gamma$ is convex6cocompact and $\xi,\eta \in \Lambda_\Gamma$, it is known that there exists $c_0>0$ such that $d_{\gamma o}(\xi,\eta_\gamma)^{c_0} \leq D_{F,\gamma o}(\eta_\gamma,\xi) \leq 1$ (see \cite{PPS15}, page 56). The second contraction lemma allows us to conclude, since $d_{\gamma o}(\xi,\eta_\gamma) = d_o(\gamma^{-1} \xi,\gamma^{-1}(\eta_\gamma)) \geq c$ under the hypothesis $d_o(\xi,\eta_\gamma) \geq r_\gamma/2$.
\end{proof}
To get further control over the decay rate of $f_\gamma$ away from $\eta_\gamma$, the following \say{role reversal} result will be helpful.
\begin{lemma}[symmetry]
Let $n \in \mathbb{N}$. Since $S_n$ is a covering of $\Lambda_\Gamma$, and by choosing $C_\Gamma$ larger if necessary, we know that for every $\eta \in \Lambda_\Gamma$ there exists $\tilde{\gamma}_\eta \in S_n$ such that $\eta \in S_{\tilde{\gamma}_\eta}$ and $d(\eta,\eta_{\tilde{\gamma}_\eta}) \leq C_{reg}^{-2/\alpha} r_\gamma$. Suppose that $d_o(\eta,\eta_\gamma) \geq r_\gamma$. Then:
$$ C^{-1} f_{\tilde{\gamma}_\eta}(\eta_\gamma)  \leq f_\gamma(\eta) \leq C f_{\tilde{\gamma}_\eta}(\eta_\gamma) $$
for some constant $C$ independant of $n$, $\gamma$ and $\eta$.
\end{lemma}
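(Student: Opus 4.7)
My strategy is to apply the second item of Lemma~3.6 on both sides of the inequality and reduce the claim to a comparison of explicit factors.

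First, I would verify the hypothesis of Lemma~3.6(2) for both quantities. For $f_\gamma(\eta)$ this is immediate since $d_o(\eta,\eta_\gamma)\geq r_\gamma\geq r_\gamma/2$. For $f_{\tilde\gamma_\eta}(\eta_\gamma)$ the triangle inequality gives
\[
d_o(\eta_\gamma,\eta_{\tilde\gamma_\eta})\;\geq\; d_o(\eta_\gamma,\eta)-d_o(\eta,\eta_{\tilde\gamma_\eta})\;\geq\;\bigl(1-C_{\mathrm{reg}}^{-2/\alpha}\bigr)\,r_\gamma.
\]
Since $\gamma,\tilde\gamma_\eta\in S_n$, the radii $r_\gamma$ and $r_{\tilde\gamma_\eta}$ agree up to a factor $e^{2C_\Gamma}$, so taking $C_{\mathrm{reg}}$ large enough in terms of $C_\Gamma$ and $\alpha$ ensures the right-hand side is at least $r_{\tilde\gamma_\eta}/2$. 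Two applications of Lemma~3.6(2) then give
\[
f_\gamma(\eta)\;\asymp\; r_\gamma^{F\circ\iota}\,D_o(\eta,\eta_\gamma)^{-2},\qquad f_{\tilde\gamma_\eta}(\eta_\gamma)\;\asymp\; r_{\tilde\gamma_\eta}^{F\circ\iota}\,D_o(\eta_\gamma,\eta_{\tilde\gamma_\eta})^{-2},
\]
so the proof reduces to bounding the ratio of the two right-hand sides by an absolute constant.

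For the gap-map factor, I would use the Hölder estimate of Proposition~2.8(2) (applied once to $F$ and once to $F\circ\iota$, since $D_F$ is built out of Gibbs cocycles with these two potentials): this yields a Hölder-type regularity for $D_o$ at scale $r_\gamma$, involving the distortion factor $e^{\alpha\,d(o,\gamma o)}\asymp r_\gamma^{-\alpha}$. The choice of exponent $2/\alpha$ in the inequality $d_o(\eta,\eta_{\tilde\gamma_\eta})\leq C_{\mathrm{reg}}^{-2/\alpha}r_\gamma$ is made precisely so that perturbation $\cdot$ distortion $\leq C_{\mathrm{reg}}^{-2}$, which is absolute. Combined with the formal identity $D_{F,o}(\xi,\eta)=D_{F\circ\iota,o}(\eta,\xi)$ (a direct consequence of the definition of $D_F$ after applying $\iota$), this reduces $D_o(\eta,\eta_\gamma)$ to $D_o(\eta_\gamma,\eta_{\tilde\gamma_\eta})$ up to a ``swap'' factor measuring the asymmetry between $F$ and $F\circ\iota$.

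The remaining and main obstacle is the comparison of the two ``potential weights'' $r_\gamma^{F\circ\iota}$ and $r_{\tilde\gamma_\eta}^{F\circ\iota}$: these are line integrals of $\tilde F\circ\iota$ along two geodesics $[o,\gamma o]$ and $[o,\tilde\gamma_\eta o]$ that diverge after a fellow-travel segment of length $\asymp -\log d_o(\eta,\eta_\gamma)$. To handle this, I would use Proposition~2.8(1) for the potential $F\circ\iota$ to reinterpret each weight, up to a bounded multiplicative constant, as $e^{-C_{F\circ\iota,\eta_\gamma}(o,\gamma o)}$ respectively $e^{-C_{F\circ\iota,\eta_{\tilde\gamma_\eta}}(o,\tilde\gamma_\eta o)}$, and then use the cocycle identity together with the Hölder regularity of $C_{F\circ\iota,\cdot}$ at the scale $d_o(\eta,\eta_{\tilde\gamma_\eta})\leq C_{\mathrm{reg}}^{-2/\alpha}r_\gamma$ to compare them. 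I expect that the discrepancy between the two weights cancels exactly against the swap factor produced by the gap-map comparison in the previous paragraph, leaving an absolute ratio. This is the delicate bookkeeping step where the precise numerology of $C_\Gamma$, $C_{\mathrm{reg}}$ and $\alpha$ enters, and it plays the role, in our potential-laden context, of Lemma~A.12 of~\cite{LNP19}.
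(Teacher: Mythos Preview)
Your plan overcomplicates the first reduction and leaves the key step unresolved. The paper's route is much shorter: rather than applying item~(2) of Lemma~3.6 to $f_\gamma(\eta)$ and then trying to move $\eta$ to $\eta_{\tilde\gamma_\eta}$ \emph{inside the gap map} via Proposition~2.8(2), the paper moves $\eta$ to $\eta_{\tilde\gamma_\eta}$ \emph{inside $f_\gamma$} using item~(3). Since $d_o(\eta,\eta_{\tilde\gamma_\eta})\le C_{\mathrm{reg}}^{-2/\alpha}r_\gamma\le r_\gamma/e^2$, item~(3) gives $|f_\gamma(\eta)/f_\gamma(\eta_{\tilde\gamma_\eta})-1|\le C_{\mathrm{reg}}\cdot C_{\mathrm{reg}}^{-2}\le 1/2$, so $f_\gamma(\eta)\asymp f_\gamma(\eta_{\tilde\gamma_\eta})$ in one line. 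This is exactly why the exponent $2/\alpha$ was put in the hypothesis, and it replaces your whole paragraph on H\"older regularity of $D_o$. After that, item~(2) is applied symmetrically to $f_\gamma(\eta_{\tilde\gamma_\eta})$ and $f_{\tilde\gamma_\eta}(\eta_\gamma)$, and the quasi-symmetry of the gap map on $\Lambda_\Gamma$ (\cite{PPS15}, p.~47) closes the argument.

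Your treatment of what you call the ``main obstacle'' --- the ratio $r_\gamma^{F\circ\iota}/r_{\tilde\gamma_\eta}^{F\circ\iota}$ --- does not work as written. You propose to compare $C_{F\circ\iota,\eta_\gamma}(o,\gamma o)$ with $C_{F\circ\iota,\eta_{\tilde\gamma_\eta}}(o,\tilde\gamma_\eta o)$ via ``H\"older regularity of $C_{F\circ\iota,\cdot}$ at the scale $d_o(\eta,\eta_{\tilde\gamma_\eta})\le C_{\mathrm{reg}}^{-2/\alpha}r_\gamma$''. But the boundary arguments in these two cocycles are $\eta_\gamma$ and $\eta_{\tilde\gamma_\eta}$, which are at distance $\gtrsim r_\gamma$ from one another (this was your own lower bound a few lines earlier), not at the tiny scale you invoke; and the interior arguments $\gamma o$, $\tilde\gamma_\eta o$ are likewise macroscopically far apart. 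Proposition~2.8(2) requires the boundary points close and the interior point fixed, so it does not apply here, and the cocycle identity alone will not collapse a difference in both variables. The ``exact cancellation'' you hope for is thus left entirely unsubstantiated; you should instead follow the paper and absorb everything into item~(2) plus the quasi-symmetry of $D_{F,o}$.
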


\begin{proof}
First of all, by the third point of the previous lemma, and since $d(\eta,\eta_{\tilde{\gamma}_\eta}) \leq C_{\text{reg}}^{-2/\alpha} r_\gamma$, we can write $ f_{\gamma}(\eta)/f_{\gamma}(\eta_{\tilde{\gamma}_\eta}) \in [1/2,3/2] $.
Then, by the previous lemma again, we see that
$$ \frac{f_\gamma(\eta)}{f_{\tilde{\gamma}_\eta}(\eta_\gamma)} \simeq \frac{f_{\gamma}(\eta_{\tilde{\gamma}_\eta})}{f_{\tilde{\gamma}_\eta}(\eta_\gamma)} \simeq \frac{D_{F,o}(\eta_{\tilde{\gamma}_\eta},\eta_\gamma)^2}{D_{F,o}(\eta_{\gamma},\eta_{\tilde{\gamma}_\eta})^{2}} \simeq 1,$$
where we used the quasi symmetry of the gap map (\cite{PPS15}, page 47).
\end{proof}

We are ready to introduce the $n$-th approximation operator. For some positive function $R : \Lambda_\Gamma \rightarrow \mathbb{R}^*_+$, define, on $\partial_\infty \mathbb{H}^d$, the following positive function:
$$ P_n R(\eta) := \sum_{\gamma \in S_n} R(\eta_\gamma) r_\gamma^F f_\gamma(\eta) .$$ 
The function $P_n R$ has the regularity of $f_\gamma$ for $\gamma \in S_n$.

\begin{lemma}
Choosing $C_\Gamma$ larger if necessary, the following hold. Let $n \in \mathbb{N}$ and let $\xi,\eta \in \Lambda_{\Gamma}$ such that $d_o(\xi,\eta) \leq r_{n+1}$. Then:
$$ \left| \frac{P_nR(\xi)}{P_n R(\eta)} - 1 \right| \leq \frac{1}{2} d_o(\xi,\eta)^\alpha r_{n+1}^{-\alpha} .$$

\end{lemma}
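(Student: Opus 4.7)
The plan is to reduce the claim to a pointwise Hölder estimate for each individual $f_\gamma$ with $\gamma \in S_n$, and then exploit positivity to sum cleanly.

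First I would note that by the definition of $S_n$, every $\gamma \in S_n$ satisfies $r_\gamma \geq e^{-2C_\Gamma} r_n = e^{2C_\Gamma} r_{n+1}$. Therefore, if $C_\Gamma$ is chosen large enough, we have $r_{n+1} \leq r_\gamma / e^{2}$ for every $\gamma \in S_n$. In particular, the hypothesis $d_o(\xi,\eta) \leq r_{n+1}$ automatically gives $d_o(\xi,\eta) \leq r_\gamma/e^2$, which is exactly the smallness condition required to invoke the regularity estimate in the third item of Lemma~3.7. That lemma then yields, for each $\gamma \in S_n$ individually,
\[
\bigl| f_\gamma(\xi) - f_\gamma(\eta) \bigr| \;\leq\; C_{\text{reg}}\, d_o(\xi,\eta)^\alpha\, r_\gamma^{-\alpha}\, f_\gamma(\eta).
\]

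Next I would use $r_\gamma \geq e^{2C_\Gamma} r_{n+1}$ to absorb the $\gamma$-dependence of the right-hand side: namely $r_\gamma^{-\alpha} \leq e^{-2C_\Gamma \alpha} r_{n+1}^{-\alpha}$, which is a bound uniform in $\gamma \in S_n$. Summing over $\gamma \in S_n$ weighted by the nonnegative quantities $R(\eta_\gamma) r_\gamma^{F}$, and invoking the triangle inequality term by term, gives
\[
\bigl| P_n R(\xi) - P_n R(\eta) \bigr| \;\leq\; C_{\text{reg}}\, e^{-2 C_\Gamma \alpha}\, d_o(\xi,\eta)^\alpha\, r_{n+1}^{-\alpha} \sum_{\gamma \in S_n} R(\eta_\gamma) r_\gamma^{F} f_\gamma(\eta),
\]
and the sum on the right is precisely $P_n R(\eta)$. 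Dividing by $P_n R(\eta) > 0$ yields
\[
\left| \frac{P_n R(\xi)}{P_n R(\eta)} - 1 \right| \;\leq\; C_{\text{reg}}\, e^{-2 C_\Gamma \alpha}\, d_o(\xi,\eta)^\alpha\, r_{n+1}^{-\alpha}.
\]

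Finally, I would enlarge $C_\Gamma$ once more if necessary to ensure $C_{\text{reg}}\, e^{-2 C_\Gamma \alpha} \leq 1/2$; this is possible because the constant $C_{\text{reg}}$ was chosen in Lemma~3.7 to be independent of $C_\Gamma$. This delivers the claimed bound. I do not expect any real obstacle here: the proof is essentially a bookkeeping argument. The only subtlety worth flagging is the order of quantifiers, namely that $C_{\text{reg}}$ must be fixed before $C_\Gamma$ is tuned, which is precisely why the lemma statement in Lemma~3.7 was careful to assert that $C_{\text{reg}}$ is independent of $C_\Gamma$.
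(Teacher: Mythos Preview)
Your proof is correct and follows essentially the same approach as the paper's: use the third item of Lemma~3.7 on each $f_\gamma$, bound $r_\gamma^{-\alpha} \leq e^{-2C_\Gamma\alpha} r_{n+1}^{-\alpha}$ uniformly over $\gamma \in S_n$, sum with the nonnegative weights $R(\eta_\gamma) r_\gamma^F$, and then adjust $C_\Gamma$ at the end. Your write-up is in fact more explicit than the paper's terse version, and your remark about the order of quantifiers (that $C_{\text{reg}}$ must be fixed before $C_\Gamma$ is tuned) is exactly the point.
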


\begin{proof}
The regularity estimates on $f_\gamma$ and the positivity of $R$ yields:
$$ \left| P_nR(\xi) - P_nR(\eta) \right| \leq \sum_{\gamma \in S_n} R(\eta_\gamma) r_\gamma^F |f_\gamma(\xi)-f_\gamma(\eta)| $$
$$ \leq C_{\text{reg}}  \sum_{\gamma \in S_n} R(\eta_\gamma) r_\gamma^F f_\gamma(\eta) d_0(\xi,\eta)^{\alpha} r_{\gamma}^{-\alpha} = P_nR(\eta) \cdot C_{\text{reg}} e^{-2 \alpha C_{\Gamma}}  d_0(\xi,\eta)^{\alpha} r_{n+1}^{-\alpha} .$$
The bound follows.
\end{proof}

Now is the time where we combine all of our preliminary lemma to prove our main technical lemma: the approximation operator does a good enough job at approximating on $\Lambda_\Gamma$. Some natural hypothesis on $R$ are required: the function to approximate has to be regular enough at scale $r_n$, and has to have mild global variations (so that the decay of $f_\gamma$ away from $x_\gamma^m$ is still usefull).

\begin{lemma}[Approximation lemma]

Let $\varepsilon_0 \in (0,\delta_{reg})$ and $C_0 \geq 1$. Let $n \in \mathbb{N}$, and let $R : \Lambda_\Gamma \rightarrow \mathbb{R}$ be a positive function satisfying:
\begin{enumerate}
    \item For $\xi,\eta \in \Lambda_\Gamma$, if $d_o(\xi,\eta) \leq r_{n+1}$, then
    $$ \left|\frac{R(\xi)}{R(\eta)} - 1\right| \leq \frac{1}{2} \left(\frac{d_o(\xi,\eta)}{r_{n+1}}\right)^{\alpha} .$$
    \item For $\xi,\eta \in \Lambda_\Gamma$, if $d_o(\xi,\eta) > r_{n+1}$, then
    $$ R(\xi)/R(\eta) \leq C_0 d_o(\xi,\eta)^{\varepsilon_0} r_n^{-\varepsilon_0} .$$
\end{enumerate}
Then there exists $A \geq 1$ that only depends on $\varepsilon_0$ and $C_0$ such that, for all $\eta \in \Lambda_\Gamma$:
$$ A^{-1} R(\eta) \leq P_{n+1} R(\eta) \leq A R(\eta) .$$
\end{lemma}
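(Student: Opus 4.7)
The plan is to fix $\eta \in \Lambda_\Gamma$ and, using the covering of $\Lambda_\Gamma$ by $\{B_\gamma\}_{\gamma \in S_{n+1}}$ (Lemma 2.13), single out an index $\tilde\gamma = \tilde\gamma_\eta \in S_{n+1}$ with $\eta \in B_{\tilde\gamma}$. One then splits the sum defining $P_{n+1}R(\eta)$ into a close part, collecting those $\gamma$ with $\eta \in B_\gamma$, and a tail. The lower bound is immediate: point (1) of Lemma 3.7 gives $r_{\tilde\gamma}^F f_{\tilde\gamma}(\eta) \geq e^{-C_0}$, hence $P_{n+1}R(\eta) \geq e^{-C_0} R(\eta_{\tilde\gamma})$. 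Since $\eta, \eta_{\tilde\gamma} \in B_{\tilde\gamma}$ whose diameter is comparable to $r_{n+1}$, hypotheses (1) and (2) together (the latter only losing a harmless factor $C_0 e^{-4 C_\Gamma \varepsilon_0}$ since $r_{n+1}/r_n = e^{-4 C_\Gamma}$) yield $R(\eta_{\tilde\gamma}) \gtrsim R(\eta)$. For the upper bound, the close part is controlled by the same three ingredients: point (1) of Lemma 3.7 bounds $r_\gamma^F f_\gamma(\eta)$ uniformly, Lemma 2.13 caps the number of close indices, and the same comparison of $R(\eta_\gamma)$ to $R(\eta)$ applies.

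The real work is in the tail $\Sigma_{\mathrm{far}} = \sum_{\gamma: \eta \notin B_\gamma} R(\eta_\gamma) r_\gamma^F f_\gamma(\eta)$, where the number of terms grows like $r_{n+1}^{-(d-1)}$. The key device will be the symmetry Lemma 3.9: for each far $\gamma$ one has $d_o(\eta,\eta_\gamma) \gtrsim r_\gamma$, hence $f_\gamma(\eta) \lesssim f_{\tilde\gamma}(\eta_\gamma)$, and therefore
\begin{equation*}
\Sigma_{\mathrm{far}} \lesssim \sum_{\gamma \in S_{n+1}} R(\eta_\gamma)\, f_{\tilde\gamma}(\eta_\gamma) \, r_\gamma^F .
\end{equation*}
Now the shadow lemma (Proposition 2.12) gives $\mu_o(B_\gamma) \simeq r_\gamma^F$; combined with the bounded multiplicity of $\{B_\gamma\}_{\gamma \in S_{n+1}}$ and the H\"older regularity at scale $r_{n+1}$ of both $R$ (hypothesis (1)) and $f_{\tilde\gamma}$ (point (3) of Lemma 3.7), this sum is comparable to the integral $\int_{\Lambda_\Gamma} R \cdot f_{\tilde\gamma} \, d\mu_o$. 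Using the cocycle identity $d\mu_{\tilde\gamma o}(\xi) = f_{\tilde\gamma}(\xi) d\mu_o(\xi)$ together with $\tilde\gamma_* \mu_o = \mu_{\tilde\gamma o}$, I change variables $\xi = \tilde\gamma \zeta$ to rewrite the integral as $\int R(\tilde\gamma \zeta) \, d\mu_o(\zeta)$.

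Finally, the map $\tilde\gamma$ is strongly contracting around $\eta_{\tilde\gamma}$, and $\eta_{\tilde\gamma}$ is within $O(r_{n+1})$ of $\eta$. Split the last integral according to whether $d_o(\tilde\gamma\zeta, \eta) \leq r_{n+1}$ or $> r_{n+1}$. On the close part, hypothesis (1) bounds $R(\tilde\gamma\zeta) \lesssim R(\eta)$ directly. On the far part, hypothesis (2) and the triangle inequality $d_o(\tilde\gamma\zeta,\eta) \leq d_o(\tilde\gamma\zeta, \eta_{\tilde\gamma}) + O(r_{n+1})$, together with the sub-additivity $(a+b)^{\varepsilon_0} \leq a^{\varepsilon_0} + b^{\varepsilon_0}$, reduce the estimate to bounding $\int d_o(\tilde\gamma\zeta, \eta_{\tilde\gamma})^{\varepsilon_0} d\mu_o(\zeta)$, which is $\lesssim r_{n+1}^{\varepsilon_0}$ by point (2) of the second contraction Lemma 2.14. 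Since $r_{n+1}^{\varepsilon_0}/r_n^{\varepsilon_0} = e^{-4 C_\Gamma \varepsilon_0}$ is a constant, the far part contributes $\lesssim R(\eta)$, completing the upper bound. The main obstacle is exactly this last step: the transfer of decay through Lemma 3.9, the passage from sum to integral via the shadow lemma, and the exploitation of the contraction of $\tilde\gamma$ via Lemma 2.14 must all be orchestrated simultaneously, and keeping the various scales ($r_\gamma$, $r_{n+1}$, $r_n$, all comparable up to fixed factors of $e^{\pm C_\Gamma}$) properly aligned is where one has to be careful.
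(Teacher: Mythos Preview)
Your proposal is correct and follows essentially the same route as the paper: split $P_{n+1}R(\eta)$ into a close part ($\eta\in B_\gamma$) and a tail, handle the close part by point (1) of the properties of $f_\gamma$ together with the bounded multiplicity of the cover, and on the tail use the symmetry lemma to swap $f_\gamma(\eta)$ for $f_{\tilde\gamma_\eta}(\eta_\gamma)$, then pass from the sum to an integral via the shadow lemma, change variables through the cocycle identity $f_{\tilde\gamma}\,d\mu_o=d\mu_{\tilde\gamma o}=\tilde\gamma_*\mu_o$, and finish with the second contraction lemma. The only difference is a harmless reordering on the tail: the paper first applies hypothesis (2) to replace $R(\eta_\gamma)$ by $R(\eta)\,d_o(\eta_\gamma,\eta)^{\varepsilon_0}r_n^{-\varepsilon_0}$ \emph{before} converting the sum to an integral (so the integrand is $d_o(\xi,\eta)^{\varepsilon_0}f_{\tilde\gamma_\eta}(\xi)$), whereas you keep $R$ inside through the change of variables and only invoke hypotheses (1)--(2) at the end on $R(\tilde\gamma\zeta)$. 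Both orderings are valid and use exactly the same ingredients.
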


\begin{proof}
Let $\eta \in \Lambda_\Gamma$. We have:
$$ P_{n+1} R(\eta) = \sum_{\gamma \in S_{n+1}} R(\eta_\gamma) r_\gamma^F f_{\gamma}(\eta) = \underset{\eta \in B_\gamma}{\sum_{\gamma \in S_{n+1}}} R(\eta_\gamma) r_\gamma^F f_{\gamma}(\eta) + \underset{\eta \notin B_\gamma}{\sum_{\gamma \in S_{n+1}}} R(\eta_\gamma) r_\gamma^F f_{\gamma}(\eta) .$$
The first sum is easily controlled: if $\eta \in B_\gamma$, then $R(\eta_\gamma) \simeq R(\eta)$ and $r_\gamma^F f_\gamma(\eta) \simeq 1$. Since $\eta$ is in a (positive and) bounded number of $B_\gamma$, we find
$$C^{-1} R(\eta) \leq \underset{\eta \in B_\gamma}{\sum_{\gamma \in S_{n+1}}} R(\eta_\gamma) r_\gamma^F f_{\gamma}(\eta) \leq C R(\eta),$$
which gives the lower bound since $R$, $r_\gamma^F$ and $f_\gamma$ are positive. To conclude, we need to get an upper bound on the residual term. Using $\text{diam}(B_\gamma) \lesssim r_n$, the symmetry lemma on $f_\gamma(\eta)$, the regularity and mild variations of $R$, and using the shadow lemma $r_\gamma^F \simeq \mu_o(B_\gamma)$, we get:
$$ \underset{\eta \notin B_\gamma}{\sum_{\gamma \in S_n}} R(\eta_\gamma) r_\gamma^F f_{\gamma}(\eta) \lesssim R(\eta) r_n^{-\varepsilon_0} \sum_{\eta \notin B_\gamma} r_\gamma^F d_o(\eta_\gamma,\eta)^{\varepsilon_0} f_{\tilde{\gamma}_\eta}( \eta_\gamma ) $$ $$ \lesssim R(\eta) \left( 1 + r_n^{-\varepsilon_0} \int_{B(\eta,r_{n+1})^c} d_o(\xi,\eta)^{\varepsilon_0} \ f_{\tilde{\gamma}_\eta}(\xi) d\mu_o(\xi) \right).$$
Finally, the second contraction lemma and the bound $d_o(\eta,\eta_{\tilde{\gamma}_\eta}) \lesssim r_n$ yields:
$$ \int_{B(\eta,r_{n+1})^c} d_o(\xi,\eta)^{\varepsilon_0} \ f_{\tilde{\gamma}_\eta}(\xi) d\mu_o(\xi) \leq \int_{\Lambda_\Gamma} d_o(\xi,\eta)^{\varepsilon_0} 
 \ f_{\tilde{\gamma}_\eta}(\xi) d\mu_o(\xi)  $$ $$= \int_{\Lambda_{\Gamma}} d_o(\xi,\eta)^{\varepsilon_0} d\mu_{\tilde{\gamma}_\eta o}(\xi) =  \int_{\Lambda_\Gamma} d_o(\tilde{\gamma}_\eta(\xi),\eta)^{\varepsilon_0} d\mu_{o}(\xi)$$ 
$$ \lesssim  r_n^{\varepsilon_0} + \int_{\Lambda_\Gamma} d_o(\tilde{\gamma}_\eta(\xi),\eta_{\tilde{\gamma}_\eta})^{\varepsilon_0} d\mu_{o}(\xi)  \lesssim r_n^{ \varepsilon_0} ,$$
which concludes the proof.
\end{proof}

\subsection{The construction of $\nu$}

In this last subsection, we construct the measure $\nu$ and conclude that $(\Gamma,F)$ Patterson-Sullivan densities are stationary measures for a random walk on $\Gamma$ with exponential moment. We can conclude by following the end of \cite{LNP19} very closely, but we will recall the last arguments for the reader's convenience. \\

Recall that the large constant $C_\Gamma \geq 1$ was fixed just before Lemma 2.16, and that $r_n := e^{-4 C_\Gamma n}$. Recall also that $\alpha>0$ is fixed by Lemma 2.8. We fix $\beta \in (0,1)$ small enough so that $1-\beta \geq e^{-4 C_\Gamma \alpha} + \beta$, and we choose $\varepsilon_0$ so that $r_n^{\varepsilon_0} = (1-\beta)^n$. By taking $\beta$ even smaller, we can suppose that $\varepsilon_0 < \delta_{reg}$. For this choice of $\varepsilon_0$, and for $C_0(\varepsilon_0) := 2 (1-\beta)^{-2} e^{4 C_\Gamma \varepsilon_0}$, the approximation lemma gives us a constant $A > 1$ such that, under the hypothesis of Lemma 3.9:
$$ \frac{\beta}{A^2} R \leq \frac{\beta}{A} P_{n+1} R \leq \beta R .$$
We then use $P_n$ to successively take away some parts of $R$. Define, by induction, $R_0 := 1$ and
$$ R_{n+1} := R_{n} - \frac{\beta}{A} P_{n+1} R_n \leq R_n.$$
For the process to work as intended, we need to check that $R_n$ satisfies the hypothesis of the approximation lemma.

\begin{lemma}
Let $n \in \mathbb{N}$. The function $R_n$ is positive on $\Lambda_{\Gamma}$, and for any $\xi,\eta \in \Lambda_{\Gamma}$:
\begin{enumerate}
\item If $d_o(\xi,\eta) \leq r_{n+1}$ then $$ \left| \frac{R_n(\xi)}{R_n(\eta)} - 1 \right| \leq \frac{1}{2} d_o(\xi,\eta)^{\alpha} r_{n+1}^{-\alpha} $$
\item If $d_o(\xi,\eta) > r_{n+1}$, then $$ R_n(\xi)/R_n(\eta) \leq C_0(\varepsilon_0) \cdot  d_o(\xi,\eta)^{\varepsilon_0} (1-\beta)^{-n} .$$
\end{enumerate}
\end{lemma}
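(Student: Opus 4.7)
The proof will proceed by induction on $n$, with the base case $R_0 = 1$. For the base case, positivity and the small-scale bound are trivial. For the large-scale bound at $n=0$, one uses that the choice $r_n^{\varepsilon_0} = (1-\beta)^n$ means $r_1^{\varepsilon_0} = 1-\beta$, so $C_0(\varepsilon_0)d_o(\xi,\eta)^{\varepsilon_0} \geq C_0(\varepsilon_0)(1-\beta) \geq 2(1-\beta)^{-1} \geq 1$ when $d_o(\xi,\eta) > r_1$.

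For the inductive step, assume the three properties hold at rank $n$. The approximation lemma applies (its hypotheses match the inductive hypothesis with $n$ replaced by $n+1$ which is valid since $R_n$'s behavior at scale $r_{n+1}$ is controlled), giving $(\beta/A) P_{n+1} R_n \leq \beta R_n$. Therefore $R_{n+1} = R_n - (\beta/A) P_{n+1} R_n$ satisfies
\[
(1-\beta) R_n \leq R_{n+1} \leq R_n ,
\]
which yields positivity.

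For the small-scale regularity at rank $n+1$, take $\xi,\eta \in \Lambda_\Gamma$ with $d_o(\xi,\eta) \leq r_{n+2} = e^{-4C_\Gamma} r_{n+1}$. Using the inductive regularity for $R_n$ at scale $r_{n+1}$, together with $r_{n+1}^{-\alpha} = e^{-4C_\Gamma\alpha} r_{n+2}^{-\alpha}$, one obtains $|R_n(\xi) - R_n(\eta)| \leq \tfrac{1}{2} e^{-4C_\Gamma\alpha} d_o(\xi,\eta)^{\alpha} r_{n+2}^{-\alpha} R_n(\eta)$. Lemma 3.7 applied to $P_{n+1} R_n$ gives $|P_{n+1}R_n(\xi) - P_{n+1}R_n(\eta)| \leq \tfrac{1}{2} d_o(\xi,\eta)^{\alpha} r_{n+2}^{-\alpha} P_{n+1}R_n(\eta)$. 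Combining, dividing by $R_{n+1}(\eta) \geq (1-\beta) R_n(\eta)$, and using $(\beta/A) P_{n+1}R_n(\eta) \leq \beta R_n(\eta)$ yields
\[
\left| \frac{R_{n+1}(\xi)}{R_{n+1}(\eta)} - 1 \right| \leq \frac{1}{2} d_o(\xi,\eta)^{\alpha} r_{n+2}^{-\alpha} \cdot \frac{e^{-4C_\Gamma\alpha} + \beta}{1-\beta} ,
\]
and the conclusion follows from the choice $1-\beta \geq e^{-4C_\Gamma\alpha} + \beta$.

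For the mild variations at rank $n+1$, let $d_o(\xi,\eta) > r_{n+2}$ and split into two cases. If $d_o(\xi,\eta) > r_{n+1}$, the inductive large-scale bound combined with $R_{n+1}(\xi)/R_{n+1}(\eta) \leq R_n(\xi)/((1-\beta)R_n(\eta))$ loses exactly the needed factor $(1-\beta)^{-1}$. The subtler case is $r_{n+2} < d_o(\xi,\eta) \leq r_{n+1}$: here the inductive small-scale bound gives $R_n(\xi)/R_n(\eta) \leq 3/2$, hence $R_{n+1}(\xi)/R_{n+1}(\eta) \leq 3/(2(1-\beta))$, and one checks this is bounded by $C_0(\varepsilon_0) d_o(\xi,\eta)^{\varepsilon_0} (1-\beta)^{-n-1} \geq C_0(\varepsilon_0) r_{n+2}^{\varepsilon_0} (1-\beta)^{-n-1} = C_0(\varepsilon_0) (1-\beta)$, which is ensured by the explicit choice $C_0(\varepsilon_0) = 2(1-\beta)^{-2} e^{4C_\Gamma\varepsilon_0}$. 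The main bookkeeping obstacle is precisely this crossover case, where switching from the large-scale inductive bound to the small-scale one forces the specific value of $C_0(\varepsilon_0)$ made in the preamble.
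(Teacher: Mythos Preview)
Your proof is correct and follows essentially the same route as the paper's own argument: induction on $n$, with positivity coming from the approximation lemma via $R_{n+1}\ge (1-\beta)R_n$, the small-scale bound obtained by combining the inductive estimate on $R_n$ with the regularity lemma for $P_{n+1}R_n$ and using $1-\beta\ge e^{-4C_\Gamma\alpha}+\beta$, and the large-scale bound split into the same two subcases (crossover $r_{n+2}<d_o(\xi,\eta)\le r_{n+1}$ versus $d_o(\xi,\eta)>r_{n+1}$). The only cosmetic differences are that you use the slightly sharper $3/2$ rather than the paper's $2$ in the crossover case, and that the paper presents the crossover bookkeeping as an identity $2/(1-\beta)=\tfrac{2}{1-\beta}r_{n+2}^{\varepsilon_0}(1-\beta)^{-(n+2)}$ before bounding by $d_o(\xi,\eta)^{\varepsilon_0}$, but the substance is identical.
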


\begin{proof}

The proof goes by induction on $n$. The case $n=0$ is easy: the first point holds trivially and the second holds since $C_0(\varepsilon_0) r_{1}^{\varepsilon_0} = C_0(\varepsilon_0) e^{- 4 C_\Gamma \varepsilon_0} \geq 1$. Now, suppose that the result hold for some $n$. In this case, the approximation lemma yields
$$ R_{n+1} = R_n - \frac{\beta}{A} P_n R \geq (1-\beta) R_n, $$
and in particular $R_{n+1}$ is positive. Let us prove the first point: consider $\xi,\eta \in \Lambda_{\Gamma}$ such that $d_o(\xi,\eta) \leq r_{n+2}$. Then Lemma 3.8 gives
$$ \left| \frac{\beta}{A} P_{n+1} R_n(\xi) - \frac{\beta}{A} P_{n+1} R_n(\eta) \right| \leq \frac{1}{2} \left(\frac{\beta}{A} P_{n+1} R_n(\eta)\right) \cdot d_o(\xi,\eta)^{\alpha} r_{n+2}^{-\alpha} \leq \frac{1}{2} \beta R_n(\eta) \cdot d_o(\xi,\eta)^{\alpha} r_{n+2}^{-\alpha}. $$
Hence, using the induction hypothesis:
$$ \left| R_{n+1}(\xi) - R_{n+1}(\eta) \right| \leq \left| R_{n}(\xi) - R_{n}(\eta) \right| + \left| \frac{\beta}{A} P_{n+1} R_n(\xi) - \frac{\beta}{A} P_{n+1} R_n(\eta) \right| $$ $$ \leq \frac{1}{2} \left( r_{n+1}^{-\alpha} + \beta r_{n+2}^{-\alpha} \right) R_{n}(\eta) d_o(\xi,\eta)^{\alpha}  $$
$$ \leq \frac{1}{2} \frac{e^{-4 C_\Gamma \alpha} + \beta}{1-\beta} R_{n+1}(\eta) d_o(\xi,\eta)^{\alpha} r_{n+2}^{-\alpha} .$$
Recalling the definition of $\beta$ gives the desired bound. It remains to prove the second point. First of all, notice that, for any $\xi$ and $\eta$, we have:
$$ \frac{R_{n+1}(\xi)}{R_{n+1}(\eta)} \leq (1-\beta)^{-1} \frac{R_n(\xi)}{R_n(\eta)} .$$
Now, suppose that $d_o(\xi,\eta) \in (r_{n+2},r_{n+1}]$. The induction hypothesis gives $R_n(\xi)/R_n(\eta) \leq 1+|R_n(\xi)/R_n(\eta)-1| \leq 2$, and so:
$$ \frac{R_{n+1}(\xi)}{R_{n+1}(\eta)} \leq \frac{2}{1-\beta} =  \frac{2}{1-\beta} r_{n+2}^{\varepsilon_0} (1-\beta)^{-(n+2)} \leq \frac{2}{(1-\beta)^2} \cdot d_o(\xi,\eta)^{\varepsilon_0} (1-\beta)^{-{n+1}} ,$$
which proves the bound. Finally, suppose that $d_o(\xi,\eta) > r_{n+1}$. In this case, the induction hypothesis directly yields
$$ \frac{R_{n+1}(\xi)}{R_{n+1}(\eta)} \leq \frac{1}{1-\beta} \frac{R_n(\xi)}{R_n(\eta)} \leq C_0(\varepsilon_0) d_o(\xi,\eta)^{\varepsilon_0} (1-\beta)^{-(n+1)} ,$$
and the proof is done. \end{proof}

We are ready to prove Theorem 3.2, following Li in \cite{LNP19}.

\begin{proof}

The previous lemma ensure that for all $n$, the function $R_n$ satisfies the hypothesis of the approximation lemma. Hence, we can write for all $n$

$$ R_{n+1} = R_n - \frac{\beta}{A} P_{n+1} R_n \leq \left(1-\frac{\beta}{A^2}\right) R_n, $$
so that by induction:
$$ R_n \leq \left(1- \frac{\beta}{A^2} \right)^n \longrightarrow 0.$$
It follows that $$ 1 = R_0 - \lim_{n} R_n = \sum_{n=1}^\infty (R_{n-1} - R_{n} ) = \frac{\beta}{A} \sum_{n=1}^\infty P_{n} (R_{n-1}), $$
in other words:
$$ 1 = \sum_{n = 1}^\infty \sum_{\gamma \in S_n} \frac{\beta}{A} R_{n-1}(\eta_\gamma) r_\gamma^F \cdot f_\gamma .$$
Letting $$ \nu(\gamma) :=  \frac{\beta}{A} R_{n-1}(\eta_\gamma) r_\gamma^F \ \text{if} \ \gamma \in S_n, \quad \nu(\gamma):=0 \ \text{if} \ \gamma \neq \bigcup_k S_k  $$
 gives us a probability measure on $\Gamma$ (since $\int f_\gamma d\mu_o =1$) satisfying $\nu * \mu_0 = \mu_0$, by the remarks made section 3.1. Checking that the measure has exponential moment is easy since $\|\gamma\| \lesssim r_\gamma^{-1}$ by Remark 2.4. Hence, by the shadow lemma $r_\gamma^F \simeq \mu_o(B_\gamma)$ and since $S_n$ covers each point a bounded number of time, we get:
$$ \int_{\Gamma} \|\gamma\|^{\varepsilon} d\nu \lesssim \sum_{n} \sum_{\gamma \in S_n} \nu(\gamma) r_\gamma^{-\varepsilon}  $$
$$ \lesssim \sum_n \left( \sum_\gamma \mu_o(B_\gamma) \right) (1-\beta/A^2)^n e^{\varepsilon 4 C_\Gamma n} < \infty $$
if $\varepsilon$ is small enough. Finally, we show that group $\Gamma_\nu$ spanned by the support of $\nu$ is $\Gamma$. To see this, say that $C_\Gamma$ was chosen so large that $C_\Gamma \geq 6 \text{diam} (\text{Hull}(\Lambda_\Gamma)/\Gamma)$. In this case, there exists $\gamma_1 \in S_1$ such that $d(o,\gamma_1 o) \in [|\ln r_1| + C_\Gamma/2, |\ln r_1| + 3 C_\Gamma/2]$. Then, any $\gamma \in \Gamma$ such that $d(o,\gamma o) \leq C_\Gamma/2$ satisfies $\gamma_1 \gamma \in S_1$. In particular:
$$ \{ \gamma \in \Gamma \ , \ d(o,\gamma o) \leq C_\Gamma/2 \} \subset \Gamma_\nu,$$
and it is then well known (see for example Lemma A.14 in \cite{LNP19}) that this set spans the whole group $\Gamma$ as soon as $C_\Gamma/2$ is larger than 3 times the diameter of $\text{Hull}({\Lambda_\Gamma})/\Gamma$. 
\end{proof}

\section{Consequences on equilibrium states}

Now that we have proved Theorem 1.2, Corollary 1.3 follows directly from \cite{Li20} (since, when $d=2$, being Zariski dense is equivalent to being non-elementary). To see how this statement induce some knowledge over equilibrium states, let us recall more precisely the link between the latter and Patterson-Sullivan densities. First, recall that the Hopf coordinates $$\text{Hopf} : \left((\partial_\infty \mathbb{H}^2 \times \partial_\infty \mathbb{H}^2)\setminus \mathcal{D}\right) \times \mathbb{R} \longrightarrow T^1 \mathbb{H}^2$$ allows us to smoothly identify the unit tangent bundle of $\mathbb{H}^2$ with a torus minus the diagonal times $\mathbb{R}$ by the following process. For any $v^+ \neq v^- \in \partial_\infty \mathbb{H}^2$, and for any $t \in \mathbb{R}$, $ \text{Hopf}(v^+,v^-,t) := v $ is the unique vector $v \in T^1 M$ lying on the geodesic $]v^-,v^+[$ such that $\tilde{p}(\phi_{-t}(v))$ is the closest point to $o$ on this geodesic. 
We will denote by $(\partial_{v^+},\partial_{v^-}, \partial_{t})$ the induced basis of $T(T^1 M)$ in these coordinates. Finally, recall that $\iota$ denotes the flip map.

\begin{theorem}[\cite{PPS15}, Theorem 6.1]

Let $\Gamma \subset Iso^+(\mathbb{H}^d)$ be convex cocompact, $M := \mathbb{H}^d/\Gamma$, and $F:T^1 M \rightarrow \mathbb{R}$ be a normalized and Hölder-regular potential. Denote by $m_F \in \mathcal{P}(T^1 M)$ the associated equilibrium state, and let $\tilde{m}_F$ be its $\Gamma$-invariant lift on $T^1 \mathbb{H}^d$. Denotes by $\mu_{x}^F$ the $(\Gamma,F)$ Patterson-Sullivan density with basepoint $x$. Then, for any choice of $x \in \mathbb{H}^d$, the following identity hold in the Hopf coordinates (up to a multiplicative constant $c_0 > 0$):
$$ c_0 \cdot d\tilde{m}_F(v^+,v^-,t) = \frac{d\mu_{x}^F(v^+) d\mu^{F \circ \iota}_{x}(v^-)  dt}{D_{F,x}(v^+,v^-)^{2}} .$$
\end{theorem}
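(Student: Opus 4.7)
The plan is to build the right-hand side as a candidate $\Gamma$-invariant measure $\tilde m$ on $T^1 \mathbb{H}^d$, descend it to a finite measure on $T^1 M$, and identify the resulting probability measure with the equilibrium state via the Bowen--Ruelle Gibbs characterization.

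First I would check that $\tilde m$ is well-posed. Denote by $\tilde m^{(x)}$ the candidate measure given by the right-hand side with basepoint $x$. Using the transformation rule $d\mu_x^F(v^+) = e^{-C_{F,v^+}(x,y)}\, d\mu_y^F(v^+)$ from Theorem 2.10 (with $F$ normalized so $\delta_{\Gamma,F}=0$), the analogous rule for $\mu_x^{F \circ \iota}$, and the corresponding cocycle identity for $D_{F,x}^{2}$ obtained by unwinding its defining limit and letting the line integrals from $x$ to $y$ cancel appropriately, one verifies $\tilde m^{(x)} = \tilde m^{(y)}$. The denominator $D_{F,x}^{-2}$ is precisely engineered to absorb the two cocycle factors coming from $\mu_x^F$ and $\mu_x^{F \circ \iota}$. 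The $\Gamma$-invariance of $\tilde m := \tilde m^{(o)}$ then follows by applying equivariance $\gamma_* \mu_x^F = \mu_{\gamma x}^F$ to both boundary factors and invoking the basepoint invariance just established. Invariance under the geodesic flow is immediate: in Hopf coordinates $\phi_s$ acts by $t \mapsto t+s$, and $dt$ is translation invariant.

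Second I would descend $\tilde m$ to $T^1 M$. Both Patterson--Sullivan densities are supported on $\Lambda_\Gamma$, so the support of the quotient measure equals $NW(\phi)$, which is compact by convex cocompactness. An integration over a fundamental domain for the diagonal $\Gamma$-action on $(\Lambda_\Gamma \times \Lambda_\Gamma \setminus \mathcal D) \times \mathbb R$ shows the total mass is finite; let $m$ be the resulting normalized probability measure, with $c_0$ the inverse of the total mass.

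The remaining step, where the real difficulty lies, is to identify $m$ with the unique equilibrium state $m_F$. My approach goes through the Bowen--Gibbs property on dynamical balls. Applying the Shadow Lemma (Proposition 2.12) to both $F$ and $F \circ \iota$, one would show that for small $\varepsilon > 0$ and a Bowen ball $B_T(v,\varepsilon)$ of length $T$ around $v = \mathrm{Hopf}(v^+,v^-,0)$, the two-sided estimate
$$ C^{-1}\, e^{\int_0^T (\tilde F - P(\Gamma,F)) \circ \phi_s(v)\, ds} \leq m(B_T(v,\varepsilon)) \leq C\, e^{\int_0^T (\tilde F - P(\Gamma,F)) \circ \phi_s(v)\, ds} $$
holds. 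The geometric content is that a Bowen ball in Hopf coordinates is approximately a product of a visual ball of radius $\sim e^{-T}$ around $v^+$, a visual ball of fixed small radius around $v^-$, and an interval of length $\varepsilon$ in $t$; the weight $D_{F,o}^{-2}$ is exactly the conformal factor that converts the product of shadow-lemma mass estimates into the single dynamical integral above. Once this Gibbs property is in hand, Axiom A-ness of $\phi|_{NW(\phi)}$ together with the classical Bowen--Ruelle uniqueness theorem for Hölder equilibrium states (obtained via a Markov coding and thermodynamic formalism on a subshift of finite type) forces $m = m_F$. The main obstacle is the book-keeping that matches the anisotropic geometry of Bowen balls with the weighting $D_{F,o}^{-2}$; the rest reduces to a mechanical use of the cocycle formulas and equivariance already established in Section 2.
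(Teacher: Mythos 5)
The paper does not prove this statement; Theorem 4.1 is quoted verbatim from \cite{PPS15} (their Theorem 6.1), and the present paper simply invokes it as a black box. So there is no ``paper's own proof'' to compare against---your sketch is a blind reconstruction of the argument that PPS15 themselves give, and in outline it is the right one: the right-hand side is checked to be basepoint-independent (the gap map $D_{F,x}^{-2}$ cancels the two Gibbs-cocycle factors), hence $\Gamma$-invariant by equivariance, flow-invariant by the $dt$ factor, finite by convex cocompactness, and then identified with $m_F$ by a Gibbs estimate on Bowen balls coming from the Shadow Lemma plus uniqueness of equilibrium states.

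Two caveats on the sketch. First, the Bowen-ball geometry is slightly off: with the paper's convention that $\mathrm{Hopf}(v^+,v^-,t)$ lies on the geodesic from $v^-$ to $v^+$, the coordinate $v^+$ is the forward endpoint and thus the \emph{stable} coordinate; a forward Bowen ball $B_T(v,\varepsilon)$ therefore shrinks at rate $e^{-T}$ in $v^-$ (the unstable coordinate) and stays of size $\varepsilon$ in $v^+$, the opposite of what you wrote. One either works with two-sided Bowen balls (close on $[-T,T]$, so both boundary coordinates shrink and the Bowen integral runs over $[-T,T]$, with $F\circ\iota$ accounting for the backward half) or re-labels; this is precisely the book-keeping you deferred. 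Second, PPS15 do not route through a Markov coding and the Bowen--Ruelle symbolic-dynamics uniqueness theorem, because their setting (variable negative curvature, possibly non-compact $NW(\phi)$) need not admit such codings; they construct the Gibbs measure and prove uniqueness by direct dynamical and variational-principle arguments at the level of the flow. In the convex-cocompact hyperbolic case considered here the restriction of $\phi$ to $NW(\phi)$ is Axiom A, so your Markov-coding route is legitimate and yields the same conclusion---it trades the generality of PPS15's argument for a more classical toolbox.
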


We are now ready to prove Fourier decay for $m_F$. To do a clean proof, we write down three lemmas corresponding to Fourier decay in the three directions $(\partial_{v^+}, \partial_{v^-},\partial_t)$. We will then combine all of them to get the desired result.

\begin{lemma}
Under the conditions of Theorem 1.2, there exists $\varepsilon>0$ such that the following hold. Let $R \geq 1$ and let $\chi : T^1 \mathbb{H}^2 \rightarrow \mathbb{R}$ be a Hölder map supported on some compact $K$. There exists $C \geq 1$ such that for any $C^2$ function $\varphi : T^1 \mathbb{H}^2 \rightarrow \mathbb{R}$ satisfying $\|\varphi\|_{C^2} + (\inf_{K} |\partial_{v^+} \varphi|)^{-1} \leq R $, we have:
$$ \forall \xi \in \mathbb{R}^*, \ \left|\int_{T^1 \mathbb{H}^2} e^{i \xi \varphi(v)} \chi(v) d\tilde{m}_F(v) \right| \leq \frac{C}{ |\xi|^{\varepsilon}} .$$
\end{lemma}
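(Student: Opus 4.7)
The plan is to use Fubini in Hopf coordinates so that the $v^+$-integral becomes an oscillatory integral of the type handled by Corollary 1.3, and then control the remaining integrals against $\mu^{F\circ\iota}_x \otimes dt$ on the compact projection of $K$.

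First I would fix a compact set $K' \supset K$ whose preimage under $\text{Hopf}$ is a compact product $K_+ \times K_- \times K_t \subset \partial_\infty \mathbb{H}^2 \times \partial_\infty \mathbb{H}^2 \times \mathbb{R}$ with $K_+ \cap K_- = \emptyset$, so that on $K'$ the gap map $D_{F,x}(v^+,v^-)$ is bounded above and below by positive constants. Using Theorem 4.1 to rewrite $\tilde{m}_F$ in Hopf coordinates, the integral becomes
$$ \frac{1}{c_0}\int_{K_-}\int_{K_t}\left( \int_{K_+} e^{i\xi \varphi(v^+,v^-,t)} \frac{\chi(v^+,v^-,t)}{D_{F,x}(v^+,v^-)^2} d\mu_x^F(v^+)\right) dt\, d\mu_x^{F\circ\iota}(v^-). $$

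Next, for fixed $(v^-,t) \in K_- \times K_t$, I would apply Corollary 1.3 to the inner integral, with phase $\varphi_{v^-,t}(v^+) := \varphi(v^+,v^-,t)$ and amplitude $\chi_{v^-,t}(v^+) := \chi(v^+,v^-,t)/D_{F,x}(v^+,v^-)^2$. The hypothesis $\|\varphi\|_{C^2} + (\inf_K|\partial_{v^+}\varphi|)^{-1}\le R$ immediately gives a uniform bound on $\|\varphi_{v^-,t}\|_{C^2} + (\inf_{K_+}|\varphi_{v^-,t}'|)^{-1}$, independent of $(v^-,t)$. Likewise, since $D_{F,x}$ is $C^\infty$ on the compact set where $v^+ \neq v^-$, $\chi_{v^-,t}$ is Hölder in $v^+$ with Hölder norm uniformly bounded in $(v^-,t)$, and supported in the fixed compact $K_+$. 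Therefore Corollary 1.3 yields a single constant $C'$ and an $\varepsilon > 0$ such that the inner integral is bounded by $C' |\xi|^{-\varepsilon}$ uniformly in $(v^-,t)$.

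Finally, I would integrate this uniform bound against $dt\, d\mu_x^{F\circ\iota}(v^-)$ over the compact set $K_- \times K_t$: since $\mu_x^{F\circ\iota}$ is a finite measure and $K_t$ has finite Lebesgue measure, this yields a finite constant $C$ independent of $\xi$, giving the desired bound $C|\xi|^{-\varepsilon}$. The main (mild) obstacle is ensuring uniformity of the constant in Corollary 1.3 as the auxiliary parameters $(v^-,t)$ vary; this reduces to showing that the relevant $C^2$- and Hölder-norms, and the lower bound on $|\partial_{v^+}\varphi|$, all survive as continuous functions of $(v^-,t)$ over a compact range, which is immediate from the regularity of $\varphi$, $\chi$, and $D_{F,x}$ on the compact set $K_+ \times K_- \times K_t$.
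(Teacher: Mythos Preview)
Your proposal is correct and follows essentially the same route as the paper: write $\tilde m_F$ in Hopf coordinates, apply Corollary~1.3 to the inner $v^+$-integral with $(v^-,t)$ frozen, and then integrate the resulting uniform bound in $(v^-,t)$. One small imprecision: you claim that $D_{F,x}$ is $C^\infty$ away from the diagonal, but since $F$ is only H\"older the gap map $D_{F,x}$ is in general only H\"older (this is what the paper invokes, via \cite{PPS15}); fortunately H\"older regularity of the amplitude is all that Corollary~1.3 requires, so your argument goes through unchanged once you weaken that claim.
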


\begin{proof}
Denotes $\tilde{\varphi}$ and $\tilde{\chi}$ the functions $\varphi,\chi$ seen in the Hopf coordinates. We get, for some large $a>0$ depending only on the support of $\chi$:
$$ c_0 \int_{T^1 \mathbb{H}^2} e^{i \xi \varphi(v)} \chi(v) d\tilde{m}_F(v)   = \int_{-a}^a \int_{\Lambda_\Gamma}  \left( \int_{\Lambda_\Gamma} e^{i \xi \tilde{\varphi}(v^+,v^-,t)} \frac{\tilde{\chi}(v^+,v^-,t)}{D_{F,o}(v^+,v^-)^2} d\mu_o^F(v^+) \right) d\mu_o^{F \circ \iota}(v^-) dt .$$
Now, since $\tilde{\chi}$ is supported in a compact subset of $\left((\partial_\infty \mathbb{H}^2 \times \partial_\infty \mathbb{H}^2)\setminus \mathcal{D}\right) \times \mathbb{R}$, and since $D_F$ is uniformly Hölder (and doesn't vanish) on a compact subset of $\Lambda_\gamma \times \Lambda_\Gamma \setminus \mathcal{D}$ (see \cite{PPS15}, Lemma 3.6 and Proposition 3.5), and finally since $\partial_{v^+} \tilde{\varphi} \neq 0$ on the compact support of $\tilde{\chi}$, we see that Corollary 1.3 applies to the inner integral. (Notice that we can always extend $D_F$ outside of $\Lambda_\Gamma \times \Lambda_\Gamma \setminus \mathcal{D}$ so that it becomes Holder on all $(\partial_\infty \mathbb{H}^2 \times \partial_\infty \mathbb{H}^2) \setminus \mathcal{D}$, see \cite{Mc34}.) This gives the desired bound.
\end{proof}

\begin{lemma}
Under the conditions of Theorem 1.2, there exists $\varepsilon>0$ such that the following hold. Let $R \geq 1$ and let $\chi : T^1 \mathbb{H}^2 \rightarrow \mathbb{R}$ be a Hölder map supported on some compact $K$. There exists $C \geq 1$ such that for any $C^2$ function $\varphi : T^1 \mathbb{H}^2 \rightarrow \mathbb{R}$ satisfying $\|\varphi\|_{C^2} + (\inf_{K} |\partial_{v^-} \varphi|)^{-1} \leq R $, we have:
$$ \forall \xi \in \mathbb{R}^*, \ \left|\int_{T^1 \mathbb{H}^2} e^{i \xi \varphi(v)} \chi(v) d\tilde{m}_F(v) \right| \leq \frac{C}{ |\xi|^{\varepsilon}} .$$
\end{lemma}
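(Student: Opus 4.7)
The plan is to mimic the proof of Lemma 4.2 with the roles of $v^+$ and $v^-$ exchanged, so that after switching the order of integration the inner Fourier integral is taken with respect to the Patterson-Sullivan density in the backward direction, namely $\mu_o^{F \circ \iota}$. Concretely, starting from the Hopf description of $\tilde{m}_F$ provided by Theorem 4.1, I would write
$$ c_0 \int_{T^1 \mathbb{H}^2} e^{i \xi \varphi(v)} \chi(v) \, d\tilde{m}_F(v) = \int_{-a}^{a} \!\! \int_{\Lambda_\Gamma} \left( \int_{\Lambda_\Gamma} e^{i \xi \tilde{\varphi}(v^+,v^-,t)} \frac{\tilde{\chi}(v^+,v^-,t)}{D_{F,o}(v^+,v^-)^2} \, d\mu_o^{F \circ \iota}(v^-) \right) d\mu_o^F(v^+) \, dt $$
for some $a > 0$ depending only on the support of $\chi$, using Fubini.

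To apply Corollary 1.3 to the inner integral, I need to check that $\mu_o^{F \circ \iota}$ is a $(\Gamma, F \circ \iota)$-Patterson-Sullivan density whose potential $F \circ \iota$ satisfies (R). The flip map $\iota$ preserves $\Omega\Gamma$ (the non-wandering set is invariant under time reversal), hence $\sup_{\Omega\Gamma} (F \circ \iota) = \sup_{\Omega\Gamma} F$. Moreover a standard reparametrization shows $\int_x^{\gamma y} \widetilde{F \circ \iota} = \int_{\gamma y}^x \tilde{F} = \int_y^{\gamma^{-1} x} \tilde{F}$, so reindexing the Poincaré series via $\gamma \mapsto \gamma^{-1}$ gives $\delta_{\Gamma, F \circ \iota} = \delta_{\Gamma, F}$. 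Together these two facts show that $F \circ \iota$ satisfies (R) whenever $F$ does.

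Next, for the Corollary to apply uniformly in the outer variables $(v^+, t)$, I use that $\tilde{\chi}$ is supported in a compact subset of $((\partial_\infty \mathbb{H}^2)^2 \setminus \mathcal{D}) \times \mathbb{R}$, on which $D_{F,o}(v^+, v^-)$ is bounded away from $0$ and Hölder (this uses \cite{PPS15}, Lemma 3.6 and Proposition 3.5, together with a Hölder extension à la McShane as in the proof of Lemma 4.2). By hypothesis $\inf_K |\partial_{v^-} \varphi|^{-1} \leq R$ and $\|\varphi\|_{C^2} \leq R$, so the $C^2$ norm and non-degeneracy of $\tilde{\varphi}(v^+,\cdot,t)$ are controlled uniformly in $(v^+,t)$ on the support of $\tilde{\chi}$. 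Corollary 1.3 then yields a bound of the form $C |\xi|^{-\varepsilon}$ on the inner integral, uniform in $(v^+, t)$.

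Finally, I integrate this uniform bound over the compact range of $(v^+, t)$ in the outer integral to obtain the stated decay. The main observation underlying the argument, rather than a hard step, is the invariance of the regularity assumption (R) under the flip map, which is what gives the symmetry between the roles of $v^+$ and $v^-$ here; once this is noted, the proof is structurally identical to that of Lemma 4.2.
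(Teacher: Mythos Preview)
Your proposal is correct and follows essentially the same approach as the paper: the key observation in both is that $F\circ\iota$ again satisfies (R) (you verify $\sup_{\Omega\Gamma} F\circ\iota=\sup_{\Omega\Gamma} F$ and $\delta_{\Gamma,F\circ\iota}=\delta_{\Gamma,F}$ directly, the paper cites \cite{PPS15}, Lemma 3.3), after which the $v^-$-direction reduces to the argument of Lemma 4.2. Your write-up unpacks the Fubini and Corollary 1.3 application explicitly, whereas the paper simply says ``apply our previous lemma with $F$ replaced by $F\circ\iota$'', but the substance is identical.
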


\begin{proof}
We need to check that when $F$ satisfies the regularity assumptions (R), then $F \circ \iota$ satisfies them too. This is easy, since $\sup_{\Omega \Gamma} F \circ \iota = \sup_{\Omega \Gamma} F < \delta_{\Gamma,F} = \delta_{\Gamma, F \circ \iota} $ by Lemma 3.3 in \cite{PPS15}. Moreover, $F \circ \iota$ is still Hölder regular. Hence, one can apply our previous lemma with $F$ replaced by $F \circ \iota$, and conclude.
\end{proof}

\begin{lemma}
Under the conditions of Theorem 1.2, let $R \geq 1$ and let $\chi : T^1 \mathbb{H}^2 \rightarrow \mathbb{R}$ be a $\alpha$-Hölder map supported on some compact $K$. There exists $C \geq 1$ such that, for any $C^2$ function $\varphi : T^1 \mathbb{H}^2 \rightarrow \mathbb{R}$ satisfying $\|\varphi\|_{C^2} + (\inf_K |\partial_{t} \varphi|)^{-1} \leq R$, we have:

$$ \forall \xi \in \mathbb{R}^*, \ \left|\int_{T^1 \mathbb{H}^2} e^{i \xi \varphi(v)} \chi(v) d\tilde{m}_F(v) \right| \leq \frac{C}{ |\xi|^{\alpha}} $$
\end{lemma}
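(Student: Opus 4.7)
The key simplification here is that the $\partial_t$ direction corresponds precisely to the flow direction in Hopf coordinates, where the measure $\tilde{m}_F$ has a \emph{Lebesgue} factor $dt$. So, unlike Lemmas 4.2 and 4.3, we do not need to invoke Corollary 1.3 at all; we just need classical non-stationary phase in the $t$ variable, combined with a standard smoothing trick to absorb the mere Hölder regularity of $\chi$.

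The plan is as follows. Write the integral in the Hopf coordinates and apply Fubini, isolating the $t$-variable:
\begin{equation*}
c_0 \int_{T^1 \mathbb{H}^2} e^{i\xi\varphi(v)} \chi(v)\, d\tilde m_F(v) = \int_{\Lambda_\Gamma}\int_{\Lambda_\Gamma} I_\xi(v^+,v^-) \, \frac{d\mu_o^F(v^+)\, d\mu_o^{F\circ\iota}(v^-)}{D_{F,o}(v^+,v^-)^2},
\end{equation*}
where, for fixed $(v^+, v^-)$ in the (compact) projection of the support of $\chi$ onto the boundary factors,
\begin{equation*}
I_\xi(v^+,v^-) := \int_{-a}^{a} e^{i\xi\, \tilde\varphi(v^+,v^-,t)}\, \tilde\chi(v^+,v^-,t)\, dt
\end{equation*}
for some $a>0$ depending only on $K$. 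Since $\tilde\chi$ is compactly supported in the Hopf chart, $D_{F,o}$ is bounded away from $0$ and Hölder on the relevant compact region (as recalled in the proof of Lemma 4.2), and the Patterson--Sullivan densities are finite, the outer double integral contributes only a harmless multiplicative constant. Thus it suffices to show
$$|I_\xi(v^+,v^-)| \lesssim |\xi|^{-\alpha}$$
uniformly in $(v^+,v^-)$.

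For this inner one-dimensional integral, write $h(t) := \tilde\chi(v^+,v^-,t)$ and $\psi(t) := \tilde\varphi(v^+,v^-,t)$. By hypothesis $h$ is $\alpha$-Hölder (uniformly in $(v^+,v^-)$), $\psi \in C^2$ with $\|\psi\|_{C^2} \le R$, and $|\psi'| \ge 1/R$ on $[-a,a]$. Fix a smooth bump $\rho$ with $\int \rho = 1$, supported in $[-1,1]$, and set $\rho_\delta(t) := \delta^{-1}\rho(t/\delta)$. Let $h_\delta := h * \rho_\delta$ (after first extending $h$ to a slightly larger interval by a $C^\alpha$ cutoff). Then
$$\|h - h_\delta\|_\infty \lesssim \delta^\alpha, \qquad \|h_\delta'\|_\infty \lesssim \delta^{\alpha-1}.$$
Write $I_\xi = \int e^{i\xi\psi}(h-h_\delta)\,dt + \int e^{i\xi\psi} h_\delta\, dt$. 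The first term is $\lesssim \delta^\alpha$. For the second, one integration by parts using $e^{i\xi\psi} = (i\xi\psi')^{-1}\partial_t(e^{i\xi\psi})$ gives a boundary term that vanishes (since $h_\delta$ has compact support once $\delta$ is small enough) and a bulk term bounded by $|\xi|^{-1}(\|h_\delta'\|_\infty \cdot R + \|h_\delta\|_\infty \cdot R^3) \lesssim R^3 |\xi|^{-1}\delta^{\alpha-1}$. Optimizing by setting $\delta = |\xi|^{-1}$ yields $|I_\xi| \lesssim R^{O(1)} |\xi|^{-\alpha}$, as desired.

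The only mildly delicate point, and really the main thing to keep track of, is that every constant needs to be uniform in $(v^+, v^-)$ on the compact projection of $\mathrm{supp}(\chi)$: this follows because $D_{F,o}$ is bounded away from zero there, $\tilde\chi$ is jointly Hölder, and the $C^2$-bound on $\tilde\varphi$ together with the lower bound on $\partial_t\tilde\varphi$ survive the restriction to each slice. There is no sum-product or stationary-measure input needed; the mechanism is purely one-dimensional non-stationary phase for a Hölder amplitude.
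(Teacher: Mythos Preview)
Your proof is correct and follows essentially the same route as the paper: reduce via Hopf coordinates and Fubini to a one-dimensional oscillatory integral in $t$, mollify the H\"older amplitude at scale $\delta$, integrate by parts once using the lower bound on $\partial_t\tilde\varphi$, and optimize $\delta=|\xi|^{-1}$ to obtain the $|\xi|^{-\alpha}$ bound. The only cosmetic difference is that the paper keeps the boundary term from the integration by parts explicitly (it is $O(|\xi|^{-1})$ and hence harmless) rather than arguing it vanishes by enlarging $a$.
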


\begin{proof}
The proof is classic. We have, for some compact $\tilde{K} \subset  \left(\partial_\infty \mathbb{H}^2 \times \partial_\infty \mathbb{H}^2 \right) \setminus \mathcal{D}$ and for some large enough $a>0$ depending only on the support of $\chi$:

$$ c_0 \int_{T^1 \mathbb{H}^2} e^{i \xi \varphi(v)} \chi(v) d\tilde{m}_F(v)   = \iint_{\tilde{K}}  \left( \int_{-a}^a e^{i \xi \tilde{\varphi}(v^+,v^-,t)} {\tilde{\chi}(v^+,v^-,t)} dt \right) D_{F,o}(v^+,v^-)^{-2} d(\mu_o^F \otimes \mu_o^{F \circ \iota})(v^+,v^-) .$$
We then work on the inner integral. When $\tilde{\chi}$ is $C^1$, we can conclude by an integration by parts. So a way to conclude is to approximate $\tilde{\chi}$ by a $C^1$ map. Fix some smooth bump function $\rho : \mathbb{R} \rightarrow \mathbb{R}^+$ such that $\rho$ is zero outside $[-2,2]$, one inside $[-1,1]$, increasing on $[-2,-1]$ and decreasing on $[1,2]$. For any $\varepsilon>0$, set $$\tilde{\chi}_\varepsilon(\cdot,\cdot,t) := \int_\mathbb{R}\tilde{\chi}(\cdot,\cdot,t-x) \rho(x/\varepsilon) dx/\varepsilon.$$
This function is smooth on the $t$-variable. Moreover, if we denote by $\alpha$ a Hölder exponent for $\chi$, then a direct computation yields:

$$ \|\tilde{\chi}_\varepsilon - \tilde{\chi}\|_\infty \lesssim \varepsilon^\alpha, \quad \|\partial_t \tilde{\chi}_\varepsilon \|_\infty \lesssim \varepsilon^{-(1-\alpha)} .$$
Hence:

$$ \left|\int_{-a}^a e^{i \xi \tilde{\varphi}} {\tilde{\chi}} dt\right| \leq 2 a \| \tilde{\chi} - \tilde{\chi}_\varepsilon \|_\infty + \left|\int_{-a}^a e^{i \xi \tilde{\varphi}} {\tilde{\chi}_\varepsilon} dt\right| . $$
To control the integral on the right, we do our aforementioned integration by parts:

$$ \int_{-a}^a e^{i \xi \tilde{\varphi}} {\tilde{\chi}_\varepsilon} dt = \int_{-a}^a \frac{i \xi \partial_t \tilde{\varphi}}{i \xi \partial_t \tilde{\varphi}} e^{i \xi \tilde{\varphi}} {\tilde{\chi}_\varepsilon} dt $$
$$ = \left[ \frac{\tilde{\chi}_\varepsilon}{i \xi \partial_t \tilde{\varphi}} e^{i \xi \tilde{\varphi}} \right]_{t=-a}^{t=a} - \frac{i}{\xi} \int_{-a}^a \partial_t\left( \frac{\tilde{\chi}_\varepsilon}{\partial_t\tilde{\varphi}} \right) e^{i \xi \tilde{\varphi}} dt ,$$
so that $$ \left| \int_{-a}^a e^{i \xi \tilde{\varphi}} {\tilde{\chi}_\varepsilon} dt \right| \lesssim |\xi|^{-1} {\varepsilon^{-(1-\alpha)}} .$$
Finally, choosing $\varepsilon = 1/|\xi|$ yields
$$ \left| \int_{T^1 \mathbb{H}^2} e^{i \xi \varphi(v)} \chi(v) d\tilde{m}_F(v) \right| \lesssim \varepsilon^\alpha + \varepsilon^{-(1-\alpha)} |\xi|^{-1} \lesssim |\xi|^{-\alpha} ,$$ which is the desired bound. \end{proof}

\begin{theorem}
Under the conditions of Theorem 1.2, there exists $\varepsilon>0$ such that the following holds. Let $R \geq 1$ and let $\chi : T^1 M \rightarrow \mathbb{R}$ be a Hölder map supported on some compact $K$. There exists $C \geq 1$ such that, for any $C^2$ function  $\varphi : T^1 M \rightarrow \mathbb{R}$ satisfying $\|\varphi\|_{C^2}+(\inf_K \|d\varphi\|)^{-1} \leq R$, we have:
$$ \forall \xi \in \mathbb{R}^*, \ \left|\int_{T^1 M} e^{i \xi \varphi} \chi d m_F \right| \leq \frac{C}{ |\xi|^{\varepsilon}} $$
\end{theorem}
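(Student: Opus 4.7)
The strategy is to reduce Theorem 4.5 to a finite sum of one-directional Fourier integrals, each governed by Lemma 4.2, 4.3 or 4.4. The guiding observation is that $(\partial_{v^+}, \partial_{v^-}, \partial_t)$ is a smooth frame on $T(T^1 \mathbb{H}^2)$ on any compact region avoiding the diagonal in $\partial_\infty\mathbb{H}^2\times\partial_\infty\mathbb{H}^2$. Consequently, at every point of $K$ the hypothesis $\|d\varphi\|\geq R^{-1}$ forces at least one of the three Hopf partial derivatives $\partial_{v^\star}\varphi$, $\star\in\{+,-,t\}$, to have modulus bounded below by a constant multiple of $R^{-1}$, with the implicit constant depending only on $K$.

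First, I would cover a neighborhood of $K$ in $T^1 M$ by finitely many open sets $U_1,\ldots,U_N$ each lifting diffeomorphically through the projection $\pi:T^1\mathbb{H}^2\to T^1 M$ to a set $\tilde U_j\subset T^1\mathbb{H}^2$ on which the Hopf chart is well defined and bounded away from the diagonal. Fix once and for all a smooth partition of unity $(\rho_j)$ subordinate to $(U_j)$, chosen independently of $\varphi$. Using that each $\tilde U_j$ injects into $T^1 M$, we get
\[ \int_{T^1 M} e^{i\xi\varphi}\chi\, dm_F = \sum_{j=1}^N \int_{\tilde U_j} e^{i\xi\tilde\varphi}\,\widetilde{\rho_j\chi}\, d\tilde m_F, \]
where $\widetilde{\rho_j\chi}$ denotes the restriction to $\tilde U_j$ of the $\Gamma$-invariant lift. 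The Hölder norms of these factors are controlled by that of $\chi$ alone.

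For each $\tilde U_j$, introduce the three open sets
\[ V^{\star} := \bigl\{ v \in \tilde U_j \ : \ |\partial_{v^\star}\tilde\varphi(v)| > c R^{-1} \bigr\}, \quad \star \in \{+,-,t\}, \]
where $c>0$ depends only on the transition matrix between the Hopf frame and a fixed Riemannian frame on a neighborhood of $K$. The opening observation ensures $V^+\cup V^-\cup V^t \supset \tilde U_j\cap K$. Since $\|\varphi\|_{C^2}\leq R$ each $|\partial_{v^\star}\tilde\varphi|$ is $O(R)$-Lipschitz, so the core $\{|\partial_{v^\star}\tilde\varphi| > 2cR^{-1}\}$ lies at distance $\gtrsim R^{-2}$ from the complement of $V^\star$; this allows the construction of a smooth subordinate partition of unity $(\chi^+,\chi^-,\chi^t)$ whose Hölder norm is polynomial in $R$. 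Applying Lemma 4.2 (resp.\ 4.3, 4.4) to each integral $\int e^{i\xi\tilde\varphi}(\widetilde{\rho_j\chi}\,\chi^\star)\, d\tilde m_F$, with parameter of size $O(R)$ and integrand of Hölder norm polynomial in $R$, yields decay of the form $O(R^{O(1)})\cdot|\xi|^{-\varepsilon}$, where $\varepsilon$ is the minimum of the three exponents produced by those lemmas. Summing over $j$ and $\star$ gives the theorem.

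The main obstacle is the construction and control of the $\varphi$-dependent partition of unity $(\chi^\star)$: it is precisely the interplay between the $C^2$-bound on $\varphi$ and the uniform lower bound on $\|d\varphi\|$ that keeps its Hölder norm polynomial in $R$. This polynomial growth is exactly what Lemmas 4.2, 4.3, 4.4 tolerate, since they let their constant $C$ depend on the parameter $R$ through the hypothesis $\|\varphi\|_{C^2}+(\inf_K|\partial_{v^\star}\varphi|)^{-1}\leq R$. Once this Hölder partition is in place, the rest of the proof is a formal aggregation of one-directional decay estimates.
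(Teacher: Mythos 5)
Your proof follows the same strategy as the paper: lift the integral to $T^1\mathbb{H}^2$, cover the (compact) support by regions on which one of the three Hopf directional derivatives $\partial_{v^+}\tilde\varphi$, $\partial_{v^-}\tilde\varphi$, $\partial_t\tilde\varphi$ is bounded below, introduce a subordinate partition of unity, and control each piece by Lemma 4.2, 4.3 or 4.4 respectively. The one substantive point where you go further than the paper is the uniformity of the partition of unity in $\varphi$. The paper chooses a $\varphi$-dependent cover by balls $(B_j)$ and a partition of unity $(\widehat{\chi}_j)$ but does not spell out why the resulting constant is uniform over the class $\{\|\varphi\|_{C^2}+(\inf_K\|d\varphi\|)^{-1}\leq R\}$; you make this explicit by using the $C^2$-bound to get an $O(R)$-Lipschitz estimate on $\partial_{v^\star}\tilde\varphi$, hence a $\gtrsim R^{-2}$ margin between the cores $\{|\partial_{v^\star}\tilde\varphi|>2cR^{-1}\}$ and the complements of $V^\star$, which yields cutoffs of Hölder norm polynomial in $R$ that Lemmas 4.2--4.4 can absorb. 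This is a genuine refinement of the exposition rather than a different route, and it correctly addresses the uniformity required by the theorem's statement. The only cosmetic difference is that the paper takes a single lift of $\chi$ to a fundamental domain while you cover by finitely many injective neighborhoods; both are standard and equivalent.
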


\begin{proof} 
First of all, choose $\tilde{\chi}: T^1 \mathbb{H}^2 \rightarrow \mathbb{R}$ a lift of $\chi$ supported on a fundamental domain of $\Gamma$. Denote by $\tilde{K} \subset T^1 \mathbb{H}^2$ its (compact) support. Lift $\varphi$ to a $\Gamma$-invariant map $\tilde{\varphi} : T^1 \mathbb{H}^2 \rightarrow \mathbb{R}$. We then have:

$$ \int_{T^1 M} e^{i \xi \varphi} \chi d m_F = \int_{T^1 \mathbb{H}^2} e^{i \xi \tilde{\varphi}} \tilde{\chi} d \tilde{m}_F .$$

Now, consider the map $\mathcal{B}_\varphi : T^1 \mathbb{H}^2 \rightarrow \mathbb{R}^3$ defined by $\mathcal{B}_\varphi(v) := \left((d \tilde{\varphi})_v(\partial_{v^+}), (d \tilde{\varphi})_v(\partial_{v^-}), (d \tilde{\varphi})_v(\partial_{t}) \right)$.
Since $((\partial_{v^+})_v,(\partial_{v^-})_v,(\partial_{t})_v)$ is a basis of $T_v (T^{1} \mathbb{H}^2)$ for any $v \in T^1 \mathbb{H}^2$, and since $d\tilde{\varphi}$ doesn't vanish on $\tilde{K}$, we see that $\mathcal{B}_\varphi(\tilde{K}) \subset \mathbb{R}^3 \setminus \{0\}$. By uniform continuity of $\mathcal{B}_\varphi$ on the compact $\tilde{K}$, it follows that there exists $c_0>0$ such that we can cover $\tilde{K}$ by a finite union of compact balls $(B_j)_{j \in J}$ satisfying:
$$ \forall j \in J, \ \exists e \in \{v^+,v^-,t\}, \ \forall v \in B_j, \ |\partial_e \tilde{\varphi}(v)|>c_0 .$$
To conclude, we consider a partition of unity $(\widehat{\chi}_j)_j$ adapted to the cover $(B_j)_j$, and we write:

$$  \int_{T^1 \mathbb{H}^2} e^{i \xi \tilde{\varphi}} \tilde{\chi} d \tilde{m}_F  = \sum_{j \in J} \int_{B_j} e^{i \xi \tilde{\varphi}} \tilde{\chi} \widehat{\chi}_j d\tilde{m}_F. $$
Each of the inner integrals is then controlled by either Lemma 4.2, Lemma 4.3 or Lemma 4.4.
\end{proof}

\begin{remark}

We recover our main Theorem 1.4 as a particular case of Theorem 4.5. Indeed, if $\varphi : K \subset T^1 M \rightarrow \mathbb{R}^3$ is a $C^2$ local chart, then for any $\zeta \in \mathbb{R}^3 \setminus \{0\}$, one may write:

$$ \int_{T^1 M} e^{i \zeta \cdot \varphi(v)} \chi(v) d m_F(v) = \int_{T^1 M} e^{i |\zeta| (\zeta/|\zeta|) \cdot \varphi(v)} \chi(v) d m_F(v) \leq C |\zeta|^{- \varepsilon} ,$$
since the map $(u,v) \in \mathbb{S}^2 \times K \mapsto u \cdot (d\varphi)_v \in T_v^*(T^1 M)$ doesn't vanish (because the range of $(d\varphi)_v$ isn't contained in a plane). Notice that we used the uniformity of the constants $C \geq 1$ given by the phases $u \cdot (d\varphi)$.
\end{remark}

\begin{appendices}

\section{On the Fourier dimension}

\subsection{The upper and lower Fourier dimension}

We naturally want to make sense of the Fourier dimension of the non-wandering set of the geodesic flow, so that we can write a sentence of the form: \say{$\dim_F \text{NW}(\phi) > 0$}. But since $NW(\phi)$ is a subset of an abstract manifold, the usual definition doesn't apply. In this appendix, we suggest some definitions that one could choose to talk about the Fourier dimension of a compact set lying in an abstract manifold. \\

First of all, recall that the Fourier dimension of a probability measure $\mu \in \mathcal{P}(E)$, supported on some compact set $E \subset \mathbb{R}^d$, can be defined as:
$$ \dim_F(\mu) := \sup \{\alpha \geq 0 \ | \ \exists C \geq 1, \forall \xi \in \mathbb{R}^d \setminus \{0\}, \ |\widehat{\mu}(\xi)| \leq C |\xi|^{-\alpha/2} \}, $$
where the Fourier transform of $\mu$ is defined by $$ \widehat{\mu}(\xi) := \int_{E} e^{-2 i \pi \xi \cdot x} d\mu(x) .$$
The Fourier dimension of a compact set $E \subset \mathbb{R}^d$ is then defined as $$ \dim_F(E) := \sup \{ \min(d,\dim_F \mu) , \ \mu \in \mathcal{P}(E) \} \leq \dim_H E. $$

To define the Fourier dimension of a measure lying in a abstract manifold, a natural idea is to look at our measure into local charts. But this suppose that we have a meaningful way to \say{localize} the usual definition of the Fourier dimension. This is the content of the next well known lemma.

\begin{lemma}
Let $E \subset \mathbb{R}^d$ be a compact set. Let $\mu \in \mathcal{P}(E)$. Let $\varepsilon>0$. Denote by $\text{Bump}(\varepsilon)$ the set of smooth functions with support of diameter at most $\varepsilon$. Then:

$$ \dim_F \mu = \inf \{ \dim_F(\chi d\mu) \ | \ \chi \in \text{Bump}(\varepsilon) \} .$$
\end{lemma}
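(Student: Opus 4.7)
The plan is to establish the two inequalities separately. For $\dim_F \mu \leq \inf_{\chi} \dim_F(\chi d\mu)$, I would fix $\chi \in \text{Bump}(\varepsilon)$ (with $\chi d\mu \neq 0$) and show directly that $\dim_F(\chi d\mu) \geq \dim_F \mu$. The key identity is
$$\widehat{\chi d\mu}(\xi) = \int_{\mathbb{R}^d} \hat{\chi}(\eta)\, \hat{\mu}(\xi - \eta)\, d\eta,$$
obtained by inserting the Fourier inversion formula $\chi(x) = \int \hat{\chi}(\eta) e^{2 i \pi x \cdot \eta}\, d\eta$ for the Schwartz function $\chi$ into the definition of $\widehat{\chi d\mu}$ and applying Fubini.

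Assuming the polynomial decay $|\hat{\mu}(\xi)| \leq C |\xi|^{-\alpha/2}$ for all nonzero $\xi$ and some $\alpha < \dim_F \mu$, I would split the convolution integral into the regions $\{|\eta| \leq |\xi|/2\}$ and $\{|\eta| > |\xi|/2\}$. On the first region $|\xi - \eta| \geq |\xi|/2$, so the decay bound on $\hat{\mu}$ together with $\hat{\chi} \in L^1$ yields a contribution of order $|\xi|^{-\alpha/2}$. On the second region, the Schwartz (rapid) decay of $\hat{\chi}$ combined with the trivial bound $|\hat{\mu}| \leq \mu(\mathbb{R}^d)$ gives a contribution of order $|\xi|^{-N}$ for every $N$. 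Taking the supremum over $\alpha < \dim_F \mu$ yields $\dim_F(\chi d\mu) \geq \dim_F \mu$, and therefore $\inf_\chi \dim_F(\chi d\mu) \geq \dim_F \mu$.

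For the reverse inequality $\dim_F \mu \geq \inf_\chi \dim_F(\chi d\mu)$, I would invoke compactness of $E$: cover it by finitely many open balls of diameter less than $\varepsilon$, and choose a subordinate smooth partition of unity $(\chi_j)_{j \in J}$, with each $\chi_j \in \text{Bump}(\varepsilon)$ and $\sum_j \chi_j \equiv 1$ on a neighborhood of $E$. Then
$$\hat{\mu}(\xi) = \sum_{j \in J} \widehat{\chi_j d\mu}(\xi),$$
and since $J$ is finite, any polynomial decay rate $|\xi|^{-\alpha/2}$ enjoyed by each summand is inherited by the sum. Hence $\dim_F \mu \geq \min_{j \in J} \dim_F(\chi_j d\mu) \geq \inf_\chi \dim_F(\chi d\mu)$, which combined with the first inequality gives equality.

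I do not expect a serious obstacle here: both directions rest on standard computations. The only minor care is in the convolution splitting (verifying that the large-$|\eta|$ contribution is genuinely negligible relative to $|\xi|^{-\alpha/2}$), and in adopting a convention excluding the trivial case $\chi d\mu = 0$ from the infimum so that $\dim_F(\chi d\mu)$ is always well-defined.
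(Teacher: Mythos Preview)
Your proposal is correct and follows essentially the same approach as the paper: a partition of unity for one inequality and the convolution identity $\widehat{\chi d\mu}=\hat\chi*\hat\mu$ with a splitting argument for the other. The only difference is cosmetic: you split the convolution at radius $|\xi|/2$, whereas the paper splits at $r=|\xi|^{1-\varepsilon}$ with $\varepsilon=1-\alpha/\alpha'$ for some auxiliary $\alpha<\alpha'<\dim_F\mu$; your choice is simpler and works just as well, since on $\{|\eta|>|\xi|/2\}$ the Schwartz decay of $\hat\chi$ already gives $O(|\xi|^{-N})$ for all $N$.
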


\begin{proof}
Let $E \subset \mathbb{R}^d$ be a fixed compact set, and let $\mu \in \mathcal{P}(E)$ be a fixed (borel) probability measure supported on $E$. First of all, consider a finite covering of the compact set $E$ by balls $(B_i)_{i \in I}$ of radius $\varepsilon$. Consider an associated partition of unity $(\chi_i)_{i \in I}$. Then, for all $\alpha < \inf_{\chi} \dim_F (\chi d\mu)$, there exists $C \geq 1$ such that:
$$ |\widehat{\mu}(\xi)| = \left|\sum_{i \in I} \widehat{\chi_i d\mu}(\xi)\right| \leq C |\xi|^{-\alpha} .$$
Hence $\dim_F \mu \geq \alpha$. Since this hold for any $\alpha < \inf \{ \dim_F(\chi d\mu) \ | \ \chi \in \text{Bump}(\varepsilon) \} $, this yields $\dim_F \mu \geq \inf \{ \dim_F(\chi d\mu) \ | \ \chi \in \text{Bump}(\varepsilon) \}.$ Now we prove the other inequality. \\

Fix some smooth function with compact support $\chi$. Its Fourier transform $\widehat{\chi}$ is in the Schwartz class: in particular, for all $N \geq d+1$, there exists $C_N$ such that $\widehat{\chi}(\eta) \leq C_N |\eta|^{-N}$ for all $\eta \in \mathbb{R}^d \setminus \{0\}$. Let $\alpha < \alpha' < \dim_F \mu$. Then there exists $C \geq 1$ such that $|\widehat{\mu}(\xi)| \leq C |\xi|^{-\alpha'}$ for all $\xi \in \mathbb{R}^d \setminus \{0\}$. Now, notice that:
$$ \widehat{\chi d\mu}(\xi) = \widehat{\chi} * \widehat{\mu}(\xi)  = \int_{\mathbb{R}^d} \widehat{\chi}(\eta) \widehat{\mu}(\xi-\eta) d\eta. $$
We cut the integral in two parts, depending on some radius $r>0$ that we choose to be $r := |\xi|^{1-\varepsilon}$, where $\varepsilon := 1-\alpha/\alpha'$. We suppose that $|\xi| \geq 2$. In this case, a direct computation show that whenever $\eta \in B(0,r)$, we have $|\xi|^{1-\varepsilon} \leq C |\xi-\eta|$. We are finally ready to conclude our computation:
$$ \left|  \widehat{\chi d\mu}(\xi)\right| \leq \left| \int_{B(0,r)} \widehat{\chi}(\eta) \widehat{\mu}(\xi-\eta) d\eta \right| +  \left| \int_{B(0,r)^c} \widehat{\chi}(\eta) \widehat{\mu}(\xi-\eta) d\eta \right| $$
$$   \lesssim_N  \int_{\mathbb{R}^d} |\widehat{\chi}(\eta)| d\eta \cdot \frac{C}{|\xi|^{(1-\varepsilon)\alpha'}} +   \int_{B(0,r)^c} \frac{1}{|\eta|^N} d\eta $$
$$ \lesssim_N \frac{1}{|\xi|^\alpha} + r^{N-d} \int_{B(0,1)^c} \frac{1}{|\zeta|^N} d\zeta \lesssim \frac{1}{|\xi|^{\alpha}} $$
if $N$ is choosen large enough. It follows that $\dim_F(\chi d\mu) \geq \alpha$, and this for any $\alpha<\dim_F \mu$, so $\dim_F(\chi d\mu) \geq \dim_F (\mu)$. Taking the infimum in $\chi$ yields the desired inequality.
\end{proof}

Now we understand how the Fourier dimension of a measure $\mu$ can be computed by looking at the local behavior of $\mu$. But another, much harder problem arise now: the Fourier dimension of a measure depends very much on the embedding of this measure in the ambiant space. In concrete terms, the Fourier dimension is not going to be independant on the choice of local charts. A way to introduce an "intrinsic" quantity related to the Fourier dimension of a measure would be to take the supremum or the infimum under all those charts.
We directly give our definition in the context of a manifold.

\begin{definition}
Let $M$ be a smooth manifold of dimension $d$. Let $E\subset M$ be a compact set. Let $\mu \in \mathcal{P}(E)$. Let $k \in \mathbb{N}^*$. Let $\text{Bump}(E)$ denote the set of all smooth functions $\chi : M \rightarrow \mathbb{R}$ such that $\text{supp}(\chi)$ is contained in a local chart. We denote by $\text{Chart}(\chi,C^k)$ the set of all $C^k$ local charts $\varphi : U \rightarrow \mathbb{R}^d$, where $U \supset \text{supp}(\chi)$ is an open set containing the support of $\chi$. Now, define the \underline{lower Fourier dimension} of $\mu$ by $C^k$ charts of $M$ by:
$$ \underline{\dim}_{F,C^k}(\mu) := \inf_{\chi \in \text{Bump}(E)} \inf \{ \dim_F( \varphi_*(\chi d\mu) ) , \ \varphi \in \text{Chart}(\chi,C^k) \}. $$
Similarly, define the \underline{upper Fourier dimension} of $\mu$ by $C^k$ charts of $M$ by:
$$ \overline{\dim}_{F,C^k}(\mu) := \inf_{\chi \in \text{Bump}(E)} \sup\{ \dim_F( \varphi_*(\chi d\mu) ) , \ \varphi \in \text{Chart}(\chi,C^k) \}. $$
\end{definition}

\begin{definition}
Let $M$ be a smooth manifold of dimension $d$. Let $E\subset M$ be a compact set. Let $\mu \in \mathcal{P}(E)$. We define the lower Fourier dimension of $\mu$ by:
$$ \underline{\dim}_F(\mu) = \underline{\dim}_{F,C^\infty}(\mu).$$
\end{definition}

\begin{remark}
The lower Fourier dimension test if, for any localization $\chi d\mu$ of $\mu$, and for any smooth local chart $\varphi$, one has some decay of the Fourier transform of $\varphi_*(\chi d \mu)$. We then take the infimum of all the best decay exponents. This quantity is \underline{$C^\infty$-intrinsic} in the following sense: if $\Phi : M \rightarrow M$ is a $C^\infty$-diffeomorphism, then $\underline{\dim}_{F}(\Phi_*\mu) = \underline{\dim}_{F}(\mu)$. 
Symetrically, the $C^k$-upper Fourier dimension test if, for any localization $\chi d\mu$ of $\mu$, there exists a $C^k$-chart $\varphi$ such that one has some decay for the Fourier transform of $\varphi_*(\chi d \mu)$. This quantity is also $C^\infty$-intrinsic. Still, beware that the upper and lower Fourier dimensions depends on the dimension of the ambiant manifold. 
\end{remark}

\begin{remark}
Let $E \subset M$ be a compact set lying in a manifold $M$ of dimension $d$. Fix a bump function $\chi$ and a local chart $\varphi \in \text{Chart}(\chi,C^k)$. For $\mu \in \mathcal{P}(E)$ a measure supported in $E \subset M$, we have the following bounds:
$$ 0 \leq \underline{\dim}_{F,C^k} \mu \leq \dim_F \varphi_*(\chi d\mu) \leq \overline{\dim}_{F,C^k} \mu .$$
Moreover, if $\dim_H E < d$, then:
$$ \overline{\dim}_{F,C^k} \mu \leq \dim_H E. $$
\end{remark}

\begin{example}
Let $M$ be a manifold of dimension $d$, and consider any smooth hypersurface $N \subset M$. Let $k \geq 1$. Let $\mu$ be any smooth and compactly supported measure on $N$. Then: $$ \underline{\dim}_{F,C^k} (\mu) = 0, \quad \overline{\dim}_{F,C^k} (\mu) = d-1.$$
The first fact is easily proved by noticing that, locally, $N$ is diffeomorphic to a linear subspace of $\mathbb{R}^d$, which has zero Fourier dimension. The second fact is proved by noticing that, locally, $N$ is diffeomorphic to a half sphere, and any smooth measure supported on the half sphere exhibit power decay of its Fourier transform with exponent $(d-1)/2$.
\end{example}

\begin{remark}
It seems that, for some well behaved measures $\mu \in \mathcal{P}(E)$ supported on compacts $E$ with $\dim_H E < d$, one might expect the quantity $\overline{\dim}_{F,C^k} \mu$ is be comparable to $\dim_H E$. For some measures lying in a 1-dimensionnal curve, this is the content of Theorem 2 in \cite{Ek16}.
\end{remark}

\begin{remark}
Using this langage, the results of \cite{BD17}, $\cite{LNP19}$, \cite{SS20}, \cite{Le21} and \cite{Li20}  all implies positivity of the lower Fourier dimension by $C^2$ charts of some measures (respectively: Patterson-Sullivan measures, Patterson-Sullivan measures, equilibrium states, equilibrium states, and stationary measures). This is a bit stronger than a related notion found in \cite{LNP19}, namely the \say{$C^2$-stable positivity of the Fourier dimension}. The results in our paper implies the following: under the conditions of Theorem 1.2, the equilibrium state $m_F \in \mathcal{P}(NW(\phi))$ satisfies $$\underline{\dim}_{F,C^2} (m_F) >0,$$
where the non-wandering set $NW(\phi)$ of the geodesic flow is seen in the unit tangent bundle $T^1 M$. In particular, its lower Fourier dimension is positive.
\end{remark}

\subsection{A variation with real valued phases}

For completeness, we suggest two variations for intrinsic notions of Fourier dimension for a measure in an abstract manifold. The first is exposed in this subsection, and the next will be discussed in the next subsection. 
Inspired by the computations made in section 4, we may want to look at more general oscillatory integrals involving $\mu$. A possibility is the following.

\begin{definition}
Let $M$ be a smooth manifold of dimension $d$. Let $E\subset M$ be a compact set. Let $\mu \in \mathcal{P}(E)$. Let $k \in \mathbb{N}^*$. Let $\text{Bump}(E)$ denote the set of all smooth functions $\chi : M \rightarrow \mathbb{R}$ such that $\text{supp}(\chi)$ is contained in a local chart. We denote by $\text{Phase}(\chi,C^k)$ the set of all real valued and $C^k$ maps $\psi : U \rightarrow \mathbb{R}$ with nonvanishing differential, where $U \supset \text{supp}(\chi)$ is an open set containing the support of $\chi$. Now, define the \underline{lower Fourier dimension} of $\mu$ by $C^k$ phases of $M$ by:
$$ \underline{\dim}_{F,C^k}^{\text{real}}(\mu) := \inf_{\chi \in \text{Bump}(E)} \inf \{ \dim_F( \psi_*(\chi d\mu) ) , \ \psi \in \text{Phase}(\chi,C^k) \}. $$
Similarly, define the \underline{upper Fourier dimension} of $\mu$ by $C^k$ phases of $M$ by:
$$ \overline{\dim}_{F,C^k}^{\text{real}}(\mu) := \inf_{\chi \in \text{Bump}(E)} \sup\{ \dim_F( \psi_*(\chi d\mu) ) , \ \psi \in \text{Phase}(\chi,C^k) \}. $$
As before, we also denote $\underline{\dim}_F^{\text{real}}(\mu) := \underline{\dim}_{F,C^\infty}^{\text{real}}(\mu)$.
\end{definition}

\begin{remark}
First of all, notice that $\psi_*(\chi d\mu)$ is a measure supported in $\mathbb{R}$, so its Fourier transform is a function from $\mathbb{R}$ to $\mathbb{C}$. More precisely:
$$ \forall t \in \mathbb{R}, \ \widehat{\psi_*(\chi d\mu)}(t) := \int_{E} e^{i t \psi(x)} \chi(x) d\mu(x) . $$
Like before, the lower/upper Fourier dimensions with real phases are $C^\infty$-intrinsic in the sense that for any $C^\infty$-diffeomorphism $\Phi : M \rightarrow M$, we have $\underline{\dim}_{F,C^k}^{\text{real}}(\Phi_*\mu) = \underline{\dim}_{F,C^k}^{\text{real}}(\mu) $ and $\overline{\dim}_{F,C^k}^{\text{real}}(\Phi_*\mu) = \overline{\dim}_{F,C^k}^{\text{real}}(\mu)$. 
\end{remark}

\begin{example}
Let $M$ be a smooth manifold, and let $N$ be a smooth submanifold of $M$. Let $\mu$ be a smooth and compactly supported probability measure in $N$. Then:
$$ \underline{\dim}_{F,C^k}^{\text{real}}(\mu) = 0 , \quad \overline{\dim}_{F,C^k}^{\text{real}}(\mu) = \infty. $$
These equalities can be proved as follow. Consider some smooth bump function $\chi$ with small enough support. Now, there exists a phase $\psi$, defined on a neighborhood $U$ of $\text{supp}(\chi)$, with nonvanishing differential on $U$ but which is constant on $N$. The associated oscillatory integral $\widehat{\psi_*(\chi d\mu)}$ doesn't decay, hence the computation on the lower Fourier dimension with real phases. There also exists smooth a phase $\psi$ such that $(d\psi)_{|TN}$ doesn't vanish. By the non-stationnary phase lemma, the associated oscillatory integral decay more than $t^{-N}$, for any $N \geq 0$, hence the computation on the upper Fourier dimension with real phases. \\
Notice how, in particular, $\min(\overline{\dim}_{F,C^k}^{real}(\mu),d)$ may be strictly larger than the Hausdorff dimension of the support of $\mu$. This may be a sign that this variation of the upper dimension isn't well behaving as a \say{Fourier dimension}. 
\end{example}

\begin{lemma}
We can compare this Fourier dimension with the previous one. We have:
$$ \underline{\dim}_{F,C^k}(\mu) \leq \underline{\dim}_{F,C^k}^{real}(\mu) , \quad \overline{\dim}_{F,C^k}(\mu) \leq \overline{\dim}_{F,C^k}^{real}(\mu) . $$

\end{lemma}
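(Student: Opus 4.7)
The plan rests on a single observation: a real-valued phase $\psi$ with nonvanishing differential is, locally, nothing but the first coordinate of a $C^k$ local chart. If $\varphi = (\psi, \psi_2, \ldots, \psi_d) : U \to \mathbb{R}^d$ is such a chart and $\chi$ is a bump supported in $U$, then for every $t \in \mathbb{R}$,
$$ \widehat{\psi_*(\chi d\mu)}(t) = \int_{M} e^{i t \psi(x)} \chi(x)\, d\mu(x) = \widehat{\varphi_*(\chi d\mu)}(t, 0, \ldots, 0). $$
Any power-decay bound $|\widehat{\varphi_*(\chi d\mu)}(\xi)| \leq C|\xi|^{-\alpha/2}$ valid on $\mathbb{R}^d \setminus \{0\}$ thus restricts to $|\widehat{\psi_*(\chi d\mu)}(t)| \leq C |t|^{-\alpha/2}$ on $\mathbb{R} \setminus \{0\}$, which gives the pointwise inequality $\dim_F \psi_*(\chi d\mu) \geq \dim_F \varphi_*(\chi d\mu)$.

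For the upper dimension, this is already enough. Given any $\chi \in \text{Bump}(E)$ and any $\varphi \in \text{Chart}(\chi, C^k)$, set $\psi := \varphi_1$. Since $\varphi$ is a chart, $d\varphi$ has full rank on its domain, so $d\psi$ does not vanish there and $\psi \in \text{Phase}(\chi, C^k)$. The observation above gives $\sup_{\psi} \dim_F \psi_*(\chi d\mu) \geq \dim_F \varphi_*(\chi d\mu)$; taking the supremum over $\varphi$ and then the infimum over $\chi$ yields $\overline{\dim}_{F,C^k}(\mu) \leq \overline{\dim}_{F,C^k}^{\text{real}}(\mu)$.

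For the lower dimension, the mild obstacle is that a phase defined on a neighborhood of $\text{supp}(\chi)$ need not extend to a single $C^k$ local chart on the whole of that neighborhood. I would handle this with a partition of unity. The submersion theorem provides, for each $p \in \text{supp}(\chi)$, a small open neighborhood $V_p \subset U$ and $C^k$ functions $\psi_2^{(p)}, \ldots, \psi_d^{(p)}$ on $V_p$ making $\varphi_p := (\psi, \psi_2^{(p)}, \ldots, \psi_d^{(p)}) : V_p \to \mathbb{R}^d$ a $C^k$ chart. Compactness of $\text{supp}(\chi)$ extracts a finite subcover $(V_{p_j})_{j=1}^N$, and I pick a subordinate smooth partition of unity $(\eta_j)_j$ with $\text{supp}(\eta_j) \subset V_{p_j}$. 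Each $\chi \eta_j$ then lies in $\text{Bump}(E)$ (its support is contained in the chart $V_{p_j}$), so for every $\alpha < \underline{\dim}_{F,C^k}(\mu)$ there is a constant $C_j \geq 1$ with $|\widehat{(\varphi_{p_j})_*((\chi\eta_j) d\mu)}(\xi)| \leq C_j |\xi|^{-\alpha/2}$. Expanding
$$ \widehat{\psi_*(\chi d\mu)}(t) = \sum_{j=1}^N \widehat{(\varphi_{p_j})_*((\chi \eta_j) d\mu)}(t, 0, \ldots, 0), $$
the triangle inequality (a finite sum of power-decaying functions decays at the slowest common rate) gives $|\widehat{\psi_*(\chi d\mu)}(t)| \leq (\sum_j C_j) |t|^{-\alpha/2}$, hence $\dim_F \psi_*(\chi d\mu) \geq \underline{\dim}_{F,C^k}(\mu)$. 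Taking infima over $\chi$ and $\psi$ finishes the proof.

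The only real friction in the argument is the global-versus-local extension of $\psi$ to a chart, which the partition of unity resolves because $\text{Bump}(E)$ is closed under multiplication by smooth cutoffs; everything else is the tautology that Fourier decay of a pushforward to $\mathbb{R}^d$ in \emph{all} directions implies decay of pushforwards to $\mathbb{R}$ along coordinate directions, applied once in each direction of the inequality.
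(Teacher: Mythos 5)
Your proof is correct, and it rests on the same key observation as the paper's: whenever $\psi$ is a coordinate of a $C^k$ chart $\varphi$, the one-dimensional Fourier transform of $\psi_*(\chi\, d\mu)$ is obtained by restricting $\widehat{\varphi_*(\chi\, d\mu)}$ to a coordinate axis, so power decay in $\mathbb{R}^d$ implies power decay in $\mathbb{R}$. The upper-dimension inequality is essentially identical to the paper's, which takes $\psi := u\cdot\varphi$ for an arbitrary unit vector $u$ — your choice $\psi := \varphi_1$ is just the case $u = e_1$, and both work equally well.

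Where you genuinely improve on the paper is in the lower-dimension inequality. The paper simply writes ``by the submersion theorem, there exists a local chart $\varphi : U \to \mathbb{R}^d$ such that $\varphi(x) = \psi(x)e_1 + \sum_{j\ge 2} f_j(x)e_j$,'' asserting a chart with $\psi$ as first coordinate on the entire neighborhood $U \supset \mathrm{supp}(\chi)$. But the submersion theorem is a local normal form — it gives such a chart only in a neighborhood of each point, and these need not patch together into a single chart over all of $U$ (the level sets of $\psi$ restricted to $U$ need not be diffeomorphic to open subsets of a hyperplane). Your finite subcover and subordinate partition of unity $(\eta_j)$, splitting $\chi = \sum_j \chi\eta_j$ with each piece supported inside one submersion chart $V_{p_j}$, is exactly the repair needed; the decomposition
$\widehat{\psi_*(\chi\, d\mu)}(t) = \sum_j \widehat{(\varphi_{p_j})_*((\chi\eta_j)\, d\mu)}(t,0,\dots,0)$
then gives the bound, because each $\chi\eta_j$ is again in $\mathrm{Bump}(E)$ and each $\varphi_{p_j}$ is an admissible chart for it. The argument is a clean tightening of the paper's proof.
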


\begin{proof}
Let $\alpha<\underline{\dim}_{F,C^k}(\mu)$. Then, for any bump function $\chi$ and for any associated local chart $\varphi$, there exists some constant $C$ such that, for all $\xi \in \mathbb{R}^d \setminus \{0\}$, we have $|\widehat{\varphi_*(\chi d\mu)}(\xi)| \leq C |\xi|^{-\alpha/2}$. Now fix $\psi : U \rightarrow \mathbb{R}$ with nonvanishing differential, where $\text{supp}(\chi) \subset U$. By the submersion theorem, there exists a local chart $\varphi:U \rightarrow \mathbb{R}^d$ such that $\varphi(x) = \psi(x) e_1 + \sum_{j=2}^d f_j(x) e_j$ (where $(e_i)_i$ is the canonical basis of $\mathbb{R}^d$, and where $f_j$ are some real valued functions). Hence, one can write:
$$ |\psi_*(\chi d\mu)(t)| = |\varphi_*(\chi d\mu)(t e_1)| \leq C |t|^{-\alpha/2}.$$
Hence $\underline{\dim}_{F,C^k}^{real}(\mu) \geq \alpha$, and this for any $\alpha < \underline{\dim}_{F,C^k}(\mu)$, hence the desired bound. \\

The second bound is proved as follow. Let $\alpha < \overline{\dim}_{F,C^k}(\mu)$. Let $\chi$ be a small bump function. There there exists a local chart $\varphi : U \rightarrow \mathbb{R}^d$, with $U \supset \text{supp}(\chi)$, such that $\widehat{\varphi_*(\chi d\mu)} \lesssim |\xi|^{-\alpha/2}$. Let $u \in \mathbb{S}^{d-1}$ and consider $\psi(x) := u \cdot \varphi(x)$. It is easy to check that $\psi$ has nonvanishing differential, and since, for any $t \in \mathbb{R} \setminus \{0\}$,
$$ |\widehat{\psi_*(\chi d\mu)}(t)| = |\widehat{\varphi_*(\chi d\mu)}(ut)| \lesssim |t|^{-\alpha/2} ,$$
we get $\overline{\dim}_{F,C^k}^{real}(\mu) \geq \alpha$. The bound follow.
\end{proof}

In concrete cases, we expect the lower Fourier dimension and the lower Fourier dimension with real phases to be equal. Unfortunately, our choices of definitions doesn't clearly make that happen all the time. We have to add a very natural assumption for the equality to hold.

\begin{definition}
Let $\mu \in \mathcal{P}(E)$, where $E \subset M$ is a compact subset of a smooth manifold. We say that $\mu$ admits reasonnable constants for $C^k$-phases if, for any $\alpha < \underline{\dim}^{\text{real}}_{F,C^k}(\mu)$, and for any $\chi \in \text{Bump}(E)$, the following hold:
$$ \forall R \geq 1, \ \exists C_R \geq 1, \ \forall \psi \in \text{Phase}(\chi,C^k), $$ $$  \left( \|\psi\|_{C^k} + \sup_{x \in U} \| (d\psi)_x \|^{-1} 
 \leq R \right) \Longrightarrow \left( \forall t \in \mathbb{R}^*, \ |\widehat{\psi_*(\chi d\mu)}(t)| \leq C_R t^{-\alpha/2} \right). $$
\end{definition}

Under this natural assumption, we have equality of the lower Fourier dimensions.

\begin{lemma}
Let $\mu\in \mathcal{P}(E)$, where $E \subset M$ is a compact subset of some smooth manifold $M$. Suppose that $\mu$ admits reasonnable constants for $C^k$-phases. Then:
$$ \underline{\dim}_{F,C^k}(\mu) = \underline{\dim}_{F,C^k}^{real}(\mu) $$
\end{lemma}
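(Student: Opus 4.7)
By Lemma A.11, we already have the inequality $\underline{\dim}_{F,C^k}(\mu) \leq \underline{\dim}_{F,C^k}^{real}(\mu)$, so the task is to prove the reverse inequality under the reasonable-constants hypothesis. The plan is to test the Fourier decay of $\varphi_*(\chi d\mu)$ along each direction $u \in \mathbb{S}^{d-1}$ by means of the real-valued phase $\psi_u(x) := u \cdot \varphi(x)$, and then to use the uniformity built into the reasonable-constants hypothesis to glue these directional estimates into a single Fourier decay statement on $\mathbb{R}^d$.

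Fix $\alpha < \underline{\dim}^{\mathrm{real}}_{F,C^k}(\mu)$, a bump function $\chi \in \text{Bump}(E)$, and a chart $\varphi \in \text{Chart}(\chi, C^k)$. For each $u \in \mathbb{S}^{d-1}$ set $\psi_u := u \cdot \varphi$. Since $\varphi$ is a local chart, $(d\varphi)_x$ is invertible for every $x$ in the open domain $U$ containing $\text{supp}(\chi)$, so $(d\psi_u)_x = u^\top (d\varphi)_x$ is a nonzero covector; hence $\psi_u \in \text{Phase}(\chi, C^k)$. The first key step is to obtain a bound $R \geq 1$, independent of $u$, controlling the Phase norms of $\psi_u$: on the one hand $\|\psi_u\|_{C^k} \leq \|\varphi\|_{C^k}$ since $|u|=1$, and on the other hand
\[
\|(d\psi_u)_x\|^{-1} = \|u^\top (d\varphi)_x\|^{-1} \leq \sigma_{\min}\bigl((d\varphi)_x\bigr)^{-1},
\]
which is bounded uniformly in $u$ and in $x \in \text{supp}(\chi)$ by compactness and continuity of $x \mapsto (d\varphi)_x$ together with the invertibility of $(d\varphi)_x$ on this compact set. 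Call this uniform bound $R$.

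The second key step is to invoke the reasonable-constants hypothesis: there exists $C_R \geq 1$ such that for every $u \in \mathbb{S}^{d-1}$ and every $t \in \mathbb{R}^\ast$,
\[
\bigl|\widehat{(\psi_u)_\ast(\chi d\mu)}(t)\bigr| \leq C_R |t|^{-\alpha/2}.
\]
Now relate this to the Fourier transform of $\varphi_\ast(\chi d\mu)$ on $\mathbb{R}^d$. Writing any nonzero $\xi \in \mathbb{R}^d$ as $\xi = |\xi| u$ with $u \in \mathbb{S}^{d-1}$, one checks directly from the definitions that
\[
\widehat{\varphi_\ast(\chi d\mu)}(\xi) = \int_E e^{-2\pi i \xi \cdot \varphi(x)} \chi(x)\, d\mu(x) = \widehat{(\psi_u)_\ast(\chi d\mu)}(-2\pi |\xi|),
\]
(up to the cosmetic constant coming from the conventions of Definition A.1 and Remark A.9). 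Substituting the uniform directional bound yields
\[
\bigl|\widehat{\varphi_\ast(\chi d\mu)}(\xi)\bigr| \leq C_R (2\pi)^{-\alpha/2} |\xi|^{-\alpha/2}.
\]
Hence $\dim_F\bigl(\varphi_\ast(\chi d\mu)\bigr) \geq \alpha$, and taking the infimum over $\chi$ and $\varphi$ gives $\underline{\dim}_{F,C^k}(\mu) \geq \alpha$; letting $\alpha \uparrow \underline{\dim}^{\mathrm{real}}_{F,C^k}(\mu)$ closes the loop.

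The only genuinely non-trivial point in this plan is the uniformity of the phase norms across all directions $u \in \mathbb{S}^{d-1}$; this is exactly why the reasonable-constants hypothesis was formulated with a constant $C_R$ depending only on the $C^k$-plus-inverse-gradient bound $R$, rather than on $\psi$ itself. Everything else is a straightforward rewriting using that a local chart has invertible differential on a compact, together with the conversion between scalar and vector-valued Fourier transforms.
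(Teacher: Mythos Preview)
Your proof is correct and follows exactly the approach indicated by the paper, which simply refers to Remark 4.6: write $\xi = |\xi| u$, set $\psi_u = u \cdot \varphi$, observe that the invertibility of $(d\varphi)_x$ on the compact $\text{supp}(\chi)$ gives a uniform bound $R$ on the phase norms over all $u \in \mathbb{S}^{d-1}$, and then invoke the reasonable-constants hypothesis to obtain a single constant $C_R$ valid for all directions. You have made explicit precisely the uniformity step that the paper only sketches.
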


\begin{proof}
An inequality is already known, we just have to prove the second one. The proof of the other inequality is the same argument as the one explained in Remark 4.6.
\end{proof}

\subsection{A directionnal variation}

A second natural and intrinsic idea would be to fix some (spatial) direction on which to look for Fourier decay. We quickly discuss these notions and then we will move on to discuss some notions of Fourier dimensions for sets.

\begin{definition}
Let $E \subset M$ be a compact set in some smooth manifold. Let $V \subset TM$ be a continuous vector bundle on an open neighborhood $\tilde{E}$ of $E$. Denote by $\text{Bump}^V(E)$ the set of all smooth bump functions with support included in $\tilde{E}$, and included in some local chart. For some $\chi \in \text{Bump}^V(E)$, denote by $\text{Phase}^V(\chi,C^k)$ the set of all $C^k$ maps $\psi:U \rightarrow \mathbb{R}$ such that $(d\psi)_{|V}$ doesn't vanish on $U$, where $\text{supp}(\chi) \subset U \subset \tilde{E}$ is some open set. \\

For $\mu \in \mathcal{P}(E)$, we define its \underline{lower Fourier dimension} in the direction $V$ for $C^k$ phases by:
$$ \underline{\dim}_{F,C^k}^V(\mu) := \inf_{\chi \in \text{Bump}^V(E)} \inf \{ \dim_F( \psi_*(\chi d\mu) ) , \ \psi \in \text{Phase}^V(\chi,C^k) \} .$$
Similarly, define its \underline{upper Fourier dimension} in the direction $V$ for $C^k$ phases by:
$$ \overline{\dim}_{F,C^k}^V(\mu) := \inf_{\chi \in \text{Bump}^V(E)} \sup \{ \dim_F( \psi_*(\chi d\mu) ) , \ \psi \in \text{Phase}^V(\chi,C^k) \} .$$
\end{definition}

\begin{remark}
Again, these notions of Fourier dimensions are $C^\infty$-intrinsic, in the following sense: if $\Phi:M \rightarrow M$ is a $C^k$-diffeomorphism of $M$, then $  \underline{\dim}_{F,C^k}^{\Phi_*V}(\Phi_*\mu) =  \underline{\dim}_{F,C^k}^V(\mu) $, and $ \overline{\dim}_{F,C^k}^{\Phi_*V}(\Phi_*\mu) =  \overline{\dim}_{F,C^k}^V(\mu)$.
\end{remark}

\begin{remark}
With these notations, the results found in \cite{Le23} implies that, for any \say{nonlinear} and sufficiently bunched solenoid $S$, and for any equilibrium state $\mu$, one has $\underline{\dim}_{F,C^{1+\alpha}}^{E_u}(\mu)>0$, where $E_u$ is the unstable line bundle associated to the dynamics on the solenoid.
\end{remark}

\begin{lemma}
Let $V_1,\dots,V_n \subset TM$ be some continuous vector bundles defined on some open neighborhood $\tilde{E}$ of $E$. Suppose that $(V_1)_p + \dots (V_n)_p = T_p M$ for all $p \in \tilde{E}$. Then:
$$ \min_{j} \underline{\dim}_{F,C^k}^{V_j}(\mu) = \underline{\dim}_{F,C^k}^{\text{real}}(\mu) , \quad \max_{j} \overline{\dim}_{F,C^k}^{V_j}(\mu) \leq \overline{\dim}_{F,C^k}^{\text{real}}(\mu) .$$ \end{lemma}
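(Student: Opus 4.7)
The plan is to handle both halves of the lemma via an appropriate partition of unity combined with the obvious inclusions $\text{Bump}^{V_j}(E) \subset \text{Bump}(E)$ and $\text{Phase}^{V_j}(\chi, C^k) \subset \text{Phase}(\chi, C^k)$. The easy direction of the first equality, $\underline{\dim}_{F,C^k}^{V_j}(\mu) \geq \underline{\dim}_{F,C^k}^{\text{real}}(\mu)$, is immediate: both infima defining $\underline{\dim}^{V_j}$ are taken over smaller classes than those defining $\underline{\dim}^{\text{real}}$, so the inequality holds for each $j$, and hence for their minimum.

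For the reverse direction $\underline{\dim}^{\text{real}} \geq \min_j \underline{\dim}^{V_j}$, I would fix $\alpha < \min_j \underline{\dim}^{V_j}$ and an arbitrary pair $\chi \in \text{Bump}(E)$, $\psi \in \text{Phase}(\chi, C^k)$, and aim to bound $|\widehat{\psi_*(\chi\, d\mu)}(t)|$ by $C|t|^{-\alpha/2}$. The transversality hypothesis is crucial here: at each $p \in \text{supp}(\chi) \cap E$, since $(d\psi)_p \neq 0$ and $(V_1)_p + \dots + (V_n)_p = T_pM$, there must exist some index $j(p)$ with $(d\psi)_p$ nonvanishing on $(V_{j(p)})_p$; by continuity of $d\psi$ and of the bundles, this persists on an open neighborhood of $p$ that can be shrunk to lie inside $\tilde E$ and inside a local chart. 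Compactness of $\text{supp}(\chi) \cap E$ then provides a finite subcover $(U_i)$ with associated indices $(j_i)$, and a subordinate smooth partition of unity $(\rho_i)$ summing to $1$ on $\text{supp}(\chi) \cap E$ yields the decomposition $\chi\, d\mu = \sum_i (\chi\rho_i)\, d\mu$. Each $\chi\rho_i$ lies in $\text{Bump}^{V_{j_i}}(E)$, while the restriction $\psi|_{U_i}$ lies in $\text{Phase}^{V_{j_i}}(\chi\rho_i, C^k)$, so the hypothesis $\alpha < \underline{\dim}^{V_{j_i}}(\mu)$ furnishes constants $C_i$ with $|\widehat{\psi_*(\chi\rho_i\, d\mu)}(t)| \leq C_i |t|^{-\alpha/2}$. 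Summing the finitely many bounds gives the desired decay for $\psi_*(\chi\, d\mu)$.

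For the second assertion, I fix $j$ and $\alpha < \overline{\dim}^{V_j}$ and must exhibit, for every $\chi \in \text{Bump}(E)$, some $\psi \in \text{Phase}(\chi, C^k)$ giving decay of order $|t|^{-\alpha/2}$. The idea is to trim $\chi$ without altering the measure $\chi\, d\mu$: pick a smooth cutoff $\eta: M \to [0,1]$ that equals $1$ on a neighborhood of $E$ and is supported in $\tilde E$, and set $\tilde\chi := \chi \eta$. Then $\tilde\chi \in \text{Bump}^{V_j}(E)$ while $\tilde\chi\, d\mu = \chi\, d\mu$ because $\eta \equiv 1$ on $\text{supp}(\mu) \subset E$. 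Applying the $\overline{\dim}^{V_j}$ hypothesis to $\tilde\chi$ then furnishes some $\psi_0 \in \text{Phase}^{V_j}(\tilde\chi, C^k)$ with $\dim_F (\psi_0)_*(\tilde\chi\, d\mu) > \alpha$. The only remaining issue is that $\psi_0$ is defined on a neighborhood of $\text{supp}(\tilde\chi)$, not necessarily of the larger $\text{supp}(\chi)$; however, since everything takes place within a single chart, $\psi_0$ can be extended to a $C^k$ map with nonvanishing differential on an open neighborhood of $\text{supp}(\chi)$ (for instance by smoothly gluing to a fixed linear coordinate function outside $\tilde E$), yielding a $\psi \in \text{Phase}(\chi, C^k)$ that agrees with $\psi_0$ on $\text{supp}(\tilde\chi)$. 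Then $\psi_*(\chi\, d\mu) = (\psi_0)_*(\tilde\chi\, d\mu)$ inherits the decay, and letting $\alpha$ approach $\overline{\dim}^{V_j}$ then maximizing over $j$ concludes. This extension step is the only mildly subtle point and is the main obstacle to a fully rigorous write-up, though it is routine within a single chart.
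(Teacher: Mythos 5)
Your handling of the equality $\min_j \underline{\dim}_{F,C^k}^{V_j}(\mu) = \underline{\dim}_{F,C^k}^{\text{real}}(\mu)$ is correct and follows the same strategy as the paper: one direction is the inclusion of phase classes $\text{Phase}^{V_j}(\chi,C^k)\subset\text{Phase}(\chi,C^k)$ (together with $\text{Bump}^{V_j}(E)\subset\text{Bump}(E)$), and the other is a partition-of-unity argument modeled on the proof of Theorem 4.5, which is also what the paper invokes.

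For the inequality $\max_j \overline{\dim}_{F,C^k}^{V_j}(\mu) \leq \overline{\dim}_{F,C^k}^{\text{real}}(\mu)$, you are in fact more careful than the paper: you correctly observe that a bump $\chi\in\text{Bump}(E)$ need not belong to $\text{Bump}^{V_j}(E)$ (its support may leave $\tilde E$), and you try to bridge this by trimming $\chi$ to $\tilde\chi=\chi\eta\in\text{Bump}^{V_j}(E)$ and then extending the phase $\psi_0\in\text{Phase}^{V_j}(\tilde\chi,C^k)$ to some $\psi\in\text{Phase}(\chi,C^k)$ agreeing with $\psi_0$ on $\text{supp}(\tilde\chi)$. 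The extension step is where the argument genuinely breaks: a $C^k$ function with nowhere-vanishing differential on a neighborhood of the compact $K=\text{supp}(\tilde\chi)$ cannot in general be extended, even within a single chart, to a $C^k$ function with nowhere-vanishing differential on a neighborhood of the larger compact $\text{supp}(\chi)$, because of index-theoretic obstructions. For instance, if $K$ is an annulus in $\mathbb{R}^2$ and $d\psi_0$ is radial, Poincaré--Hopf forces any $C^1$ extension over the inner disc to have a critical point there. Gluing to a linear coordinate does not avoid this: the obstruction lives on $K$ itself. So the step you flag as "routine" is the actual crux. The paper's own proof of this half simply treats a phase in $\text{Phase}^{V_j}(\chi,C^k)$ as a phase in $\text{Phase}(\chi,C^k)$ and never addresses the bump-class mismatch, so it inherits the same gap; a complete argument would need either to restrict the outer infimum in $\overline{\dim}_{F,C^k}^{\text{real}}$ to bumps supported in $\tilde E$ (and justify that this does not change the value), or to assume additional topological control on $\tilde E$ and $\text{supp}(\chi)$.
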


\begin{proof}
Let $\alpha < \underline{\dim}_{F,C^k}^{\text{real}}(\mu)$. Then, for any bump $\chi$ and associated phase $\psi$, one has $\widehat{\psi_*(\chi d\mu)}(t) \lesssim |t|^{-\alpha/2}$. In paticular, for any phase $\phi_j \in \text{Phase}^{V_j}(\chi,C^k)$, the previous decay holds, and so $\min_{j} \underline{\dim}_{F,C^k}^{V_j}(\mu) \geq \alpha$. Hence $\min_{j} \underline{\dim}_{F,C^k}^{V_j}(\mu)  \geq \underline{\dim}_{F,C^k}^{\text{real}}(\mu)$. \\

Now let $\alpha < \min_{j} \underline{\dim}_{F,C^k}^{V_j}(\mu)$. Then, for all $j$, for any bump $\chi$, and for any phase $\psi_j \in \text{Phase}^{V_j}(\chi,C^k)$, the previous decay applies. Now, if we fix some $\chi$ and some associated phase $\psi \in \text{Phase}(\chi,C^k)$, we know that at each point $p$, $(d\psi)_p$ is nonzero. In particular, there exists $j(p)$ such that $(d\psi)_{|V^{j(p)}_p} \neq 0$. Following the proof of Theorem 4.5, we can show by using a partition of unity that this implies $\widehat{\psi_*(\chi d\mu)}(t) \lesssim |t|^{-\alpha/2}$. Hence $\underline{\dim}_{F,C^k}^{\text{real}}(\mu) \geq \alpha$, and we have prove equality. \\

For our last bound, let $\alpha < \max_j \overline{\dim}_{F,C^k}^{V_j} \mu$. Then there exists $j$ such that, for all bump $\chi$, there exists an associated phase $\psi_j \in \text{Phase}^{V_j}(\chi,C^k)$ such that $\widehat{\psi_j(\chi d\mu)}(t) \lesssim |t|^{-\alpha/2}$. Since $\psi_j \in \text{Phase}(\chi,C^k)$, we get $\overline{\dim}_{F,C^k}^{\text{real}} \mu \geq  \max_j \overline{\dim}_{F,C^k}^{V_j} \mu$.
\end{proof}

\begin{remark}
The reverse bound for the upper dimensions is not clear: if for all bump functions $\chi$, there exists a phase $\psi$ with good fourier decay properties for $\mu$, then nothing allows us to think that $\psi$ is going to have nonvanishing diffenrential in some fixed $V_j$ on all $E$.
\end{remark}

\subsection{What about sets ?}

We finally define some intrinsic notions of Fourier dimensions for sets. First of all, recall that the usual definition for some $E \subset \mathbb{R}^d$ is:

$$ \dim_F(E) := \sup \{ \dim_F(\mu) \leq d \ , \ \mu \in \mathcal{P}(E) \} \leq \dim_H(E).$$

In particular, in view of the proof of Lemma A.1, we see that any measure $\mu$ with some Fourier decay properties may be localized anywhere on its support to still yield a measure with large Fourier dimension. Hence we find the following \say{localized} formula, for any $\varepsilon>0$:

 $$ \dim_F(E) = \underset{U \text{ open}}{\sup_{U \cap E \neq \emptyset}} \dim_F (E \cap U).$$ 

Now, we have two main ways to define the up(per) and low(er) Fourier dimension of a compact set in a manifold: directly computing the Fourier dimension of parts of $E$ in local charts, or taking the sup over the previously defined notions of Fourier dimension for measures. 

\begin{definition}
Let $E \subset M$ be a compact set included in some smooth manifold. We define its lower/upper Fourier dimension with $C^k$-charts by:
$$ \underline{\dim}_{F,C^k}(E) := \sup\{  \underline{\dim}_{F,C^k}(\mu)  \leq d, \ \mu \in \mathcal{P}(E)\}, \quad \overline{\dim}_{F,C^k}(E) := \sup\{  \overline{\dim}_{F,C^k}(\mu) \leq d, \ \mu \in \mathcal{P}(E)\}, $$
We also define the $C^k$-low Fourier dimension and $C^k$-up Fourier dimensions of $E$ by:
$$ \uwave{\dim}{}_{F,C^k}(E) := \underset{U \text{ open chart}}{\sup_{U \cap E \neq \emptyset}} \inf \{\dim_F (\varphi(E \cap U)) \ , \ \varphi:U \rightarrow \mathbb{R}^d \ C^k \text{ local chart} \}, $$
$$ {\owave{\dim}}_{F,C^k}(E) := \underset{U \text{ open chart}}{\sup_{U \cap E \neq \emptyset}} \sup \{\dim_F (\varphi(E \cap U)) \ , \ \varphi:U \rightarrow \mathbb{R}^d \ C^k \text{ local chart} \}. $$
\end{definition}

\begin{remark}
The low and up Fourier dimension are $C^k$-intrinsic in the natural sense. For exemple, if $\Phi : M \rightarrow M$ is a $C^k$-diffeomorphism, then $ \uwave{\dim}{}_{F,C^k}(\Phi(E)) =  \uwave{\dim}{}_{F,C^k}(E) $. The lower and upper Fourier dimension are $C^\infty$-intrinsic.
\end{remark}

\begin{lemma}
Let $E \subset M$ be a compact set in some smooth manifold $M$. Then:
$$ 0 \leq \underline{\dim}_{F,C^k}(E) \leq \uwave{\dim}{}_{F,C^k}(E) \leq {\owave{\dim}}{}_{F,C^k}(E) =  \overline{\dim}_{F,C^k}(E) \leq \dim_H(E) \leq d  .$$
\end{lemma}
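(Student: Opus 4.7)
The chain splits into five pieces. The trivial endpoints $0 \leq \underline{\dim}_{F,C^k}(E)$ (non-negativity of Fourier dimension of measures) and $\dim_H(E) \leq d$ (since $E$ sits in a $d$-manifold, locally Lipschitz-equivalent to $\mathbb{R}^d$) need no argument, and $\uwave{\dim}{}_{F,C^k}(E) \leq \owave{\dim}{}_{F,C^k}(E)$ is an $\inf \leq \sup$ over the same family of charts. So only three pieces carry content.

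For $\underline{\dim}_{F,C^k}(E) \leq \uwave{\dim}{}_{F,C^k}(E)$ I would fix $\alpha < \underline{\dim}_{F,C^k}(E)$, select $\mu \in \mathcal{P}(E)$ with $\underline{\dim}_{F,C^k}(\mu) > \alpha$, pick any chart domain $U$ meeting $\mathrm{supp}(\mu)$ and a bump $\chi \in \mathrm{Bump}(E)$ with $\mathrm{supp}(\chi) \subset U$ and $\int \chi\, d\mu > 0$. By the very definition of $\underline{\dim}_{F,C^k}(\mu)$, for every $C^k$ chart $\varphi$ on $U$ we have $\dim_F(\varphi_*(\chi\, d\mu)) > \alpha$; after normalization this is a probability measure supported on $\varphi(E \cap U)$, so $\dim_F(\varphi(E \cap U)) > \alpha$. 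Taking the infimum over $\varphi$ and the supremum over $U$ yields $\uwave{\dim}{}_{F,C^k}(E) \geq \alpha$.

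The equality $\owave{\dim}{}_{F,C^k}(E) = \overline{\dim}_{F,C^k}(E)$ is the step I expect to be the main obstacle. The direction $\overline \leq \owave$ is a close variant of the previous paragraph: given $\mu$ with $\overline{\dim}_{F,C^k}(\mu) > \alpha$, pick a single $\chi$ with $\int \chi\, d\mu > 0$ (exists since $\mathrm{supp}(\mu)$ is nonempty) and take the chart $\varphi$ on $U \supset \mathrm{supp}(\chi)$ guaranteed by the definition, inferring $\dim_F(\varphi(E \cap U)) > \alpha$. The reverse direction requires a bit more work: starting from $U$, $\varphi$, and $\tilde\mu \in \mathcal{P}(\varphi(E \cap U))$ with $\dim_F(\tilde\mu) > \alpha$, set $\mu := (\varphi^{-1})_*\tilde\mu$ and show $\overline{\dim}_{F,C^k}(\mu) > \alpha$. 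For any $\chi \in \mathrm{Bump}(E)$, if $\chi\, d\mu = 0$ then the zero measure has infinite Fourier dimension and we are done. Otherwise $\mathrm{supp}(\chi\, d\mu) \subset U$, but $\mathrm{supp}(\chi)$ itself may leave $U$, so $\varphi$ need not lie in $\mathrm{Chart}(\chi, C^k)$. The workaround is to pick any $C^k$ chart $\psi$ on a chart domain $V \supset \mathrm{supp}(\chi)$ and write $\psi_*(\chi\, d\mu) = (\psi \circ \varphi^{-1})_* \varphi_*(\chi\, d\mu)$; by Lemma A.1 the factor $\varphi_*(\chi\, d\mu) = (\chi \circ \varphi^{-1})\, \tilde\mu$ has Fourier dimension $> \alpha$, and the transition $\psi \circ \varphi^{-1}$ is a $C^k$ diffeomorphism of open subsets of $\mathbb{R}^d$. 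Closing the argument requires invariance of the Fourier dimension under this transition, which one invokes via the $C^\infty$-intrinsic nature of the notion (Remark A.4) — this is precisely why the paper phrases its intrinsicality statements for smooth diffeomorphisms rather than merely $C^k$ ones.

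Finally, for $\overline{\dim}_{F,C^k}(E) \leq \dim_H(E)$: fix $\mu \in \mathcal{P}(E)$ and a bump $\chi$ with $\int \chi\, d\mu > 0$ (obtained by concentrating near any point of $\mathrm{supp}(\mu)$ lying in some chart). For any $\varphi \in \mathrm{Chart}(\chi, C^k)$ the pushforward $\varphi_*(\chi\, d\mu)$ is supported on $\varphi(\mathrm{supp}(\chi) \cap E)$, so $\dim_F(\varphi_*(\chi\, d\mu)) \leq \dim_H(\varphi(\mathrm{supp}(\chi) \cap E))$. Since $\varphi$ is $C^1$, hence locally Lipschitz on the compact set $\mathrm{supp}(\chi)$, this Hausdorff dimension equals $\dim_H(\mathrm{supp}(\chi) \cap E) \leq \dim_H(E)$. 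Taking sup over $\varphi$, inf over (nontrivial) $\chi$, then sup over $\mu$ closes the chain.
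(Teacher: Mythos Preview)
Your argument tracks the paper's closely for every step except the inequality $\owave{\dim}_{F,C^k}(E)\le\overline{\dim}_{F,C^k}(E)$, and on that step you spot a genuine subtlety the paper hides behind the phrase ``in view of the proof of Lemma A.1'': with $\mu=(\varphi^{-1})_*\tilde\mu$ supported in $U$, a general $\chi\in\mathrm{Bump}(E)$ may have $\mathrm{supp}(\chi)\not\subset U$, so $\varphi\notin\mathrm{Chart}(\chi,C^k)$.

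Your proposed workaround, however, does not close the gap. You take an \emph{arbitrary} $\psi\in\mathrm{Chart}(\chi,C^k)$, write $\psi_*(\chi\,d\mu)=(\psi\circ\varphi^{-1})_*\bigl(\varphi_*(\chi\,d\mu)\bigr)$, and then appeal to Remark~A.4 for ``invariance of the Fourier dimension under this transition''. Two problems. First, Remark~A.4 asserts that the manifold-level quantities $\underline{\dim}_{F,C^k}(\mu)$ and $\overline{\dim}_{F,C^k}(\mu)$ are unchanged by a $C^\infty$-diffeomorphism of $M$; it says nothing about the ordinary Euclidean Fourier dimension being preserved by a $C^k$ diffeomorphism between open subsets of $\mathbb{R}^d$. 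That invariance is \emph{false} in general --- its failure is exactly why the paper introduces upper and lower Fourier dimensions (see the discussion preceding Definition~A.2, and Example~A.6). Second, you only need to produce \emph{one} chart $\psi\in\mathrm{Chart}(\chi,C^k)$ with $\dim_F(\psi_*(\chi\,d\mu))>\alpha$, not all of them; so the right move is to construct a specific $\psi$ agreeing with $\varphi$ (up to something Fourier-dimension-preserving, e.g.\ an affine map) on a neighbourhood of $\mathrm{supp}(\chi\,d\mu)$, and extended over $\mathrm{supp}(\chi)$. The paper does not carry this out either, so on this point both arguments are at the same informal level --- but your stated justification is actively incorrect rather than merely elliptical.

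One small improvement over the paper: in the direction $\overline{\dim}_{F,C^k}(E)\le\owave{\dim}_{F,C^k}(E)$ the paper fixes $U$ and $\varphi$ first and then claims $\dim_F\varphi_*(\chi\,d\mu)\ge\alpha$ ``by hypothesis on $\mu$'', which is a slip --- the hypothesis $\overline{\dim}_{F,C^k}(\mu)\ge\alpha$ only supplies, for each $\chi$, \emph{some} chart with this property. Your version lets the definition furnish the chart, which is the correct order.
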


\begin{proof}
Let us prove all the inequalities in order, from left to right. $ 0 \leq \underline{\dim}_{F,C^k}(E) $ is trivial. Let us prove the second one. \\

Let $\alpha<\underline{\dim}_{F,C^k}(E)$. By definition, there exists some probability measure $\mu \in \mathcal{P}(E)$ such that $\underline{\dim}_{F,C^k}(\mu) \geq \alpha$.
Now, since the support of $\mu$ is nonempty, there exists $U$ some small open set and a bump function $\chi$ supported in $U$ such that $\chi d\mu$ is a (localized) nonzero measure. Let $\varphi : U \rightarrow \mathbb{R}^d$ a local chart. Then, by hypothesis on $\mu$, $\dim_F \varphi_*(\chi d\mu) \geq \alpha$. In particular, since (up to normalization) $\varphi_*(\chi d\mu) \in \mathcal{P}(\varphi(E \cap U)) $, we have $\dim_F \varphi(E \cap U) \geq \alpha$. This for any local chart $\varphi$, and so $\inf_{\varphi} \dim_F(\varphi(E \cap U)) \geq \alpha$. This yields $\underline{\dim}_{F,C^k}(E) \geq \alpha$. Since this is true for any $\alpha<\underline{\dim}_{F,C^k}(E)$, we get the desired inequality. \\

The inequality $\uwave{\dim}{}_{F,C^k}(E) \leq {\owave{\dim}}{}_{F,C^k}(E) $ is trivial. Let us prove the equality between ${\owave{\dim}}{}_{F,C^k}(E)$ and $\overline{\dim}_{F,C^k}(E)$. Let $\alpha < {\owave{\dim}}{}_{F,C^k}(E)$. Then, there exists some small open set $U$ (such that $U \cap E \neq \emptyset$ and a local chart $\varphi:U \rightarrow \mathbb{R}^d$ such that $\dim_F(\varphi(U \cap E)) \geq \alpha$. By definition, it means that there exists some measure $\nu \in \mathcal{P}(\varphi(E \cap U))$ such that $\dim_F \nu \geq \alpha$. Letting $\mu := \varphi^{-1}_*\nu \in \mathcal{P}(E \cap U)$ yields a measure supported in $E$ that satisfies $\overline{\dim}_{F,C^k}(\mu) \geq \alpha$ (in view of the proof of Lemma A.1). Hence, $\overline{\dim}_{F,C^k}(E) \geq \alpha$. This, for any $\alpha < {\owave{\dim}}{}_{F,C^k}(E)$, so that $\overline{\dim}_{F,C^k}(E) \geq {\owave{\dim}}{}_{F,C^k}(E)$. \\

Let us prove the other inequality. Let $\alpha < \overline{\dim}_{F,C^k}(E) $. By definition, there exists $\mu \in \mathcal{P}(E)$ such that $\overline{\dim}_{F,C^k}(\mu) \geq \alpha$. Now let $U$ be some small open set with $\mu_{|U} \neq 0$, and let $\varphi:U \rightarrow \mathbb{R}^d$ be a local chart. Let $\chi$ be some bump function supported in $U$. Then, by hypothesis on $\mu$, we have $\dim_F \varphi_*(\chi d\mu) \geq \alpha$. In particular, $\dim_F( \varphi(E \cap U) ) \geq \alpha.$ Hence $\owave{\dim}_{F,C^k}(E) \geq \alpha$. This proves the other inequality, and hence concludes the proof that ${\owave{\dim}}{}_{F,C^k}(E) =  \overline{\dim}_{F,C^k}(E)$. \\

Finally, the fact that the Hausdorff dimension is invariant under $C^1$-diffeomorphisms implies
$$ \owave{\dim}_{F,C^k}(E) = \sup_{U} \sup_{\varphi} \dim_F(\varphi(U \cap E)) \leq  \sup_{U} \sup_{\varphi} \dim_H( \varphi(E \cap U)) = \sup_{U} \dim_H(E \cap U) = \dim_H(E) \leq d.$$\end{proof}

\begin{example}
Let $N \subset M$ be a hypersurface in some smooth manifold $M$. Then:
$$ \underline{\dim}_{F,C^k}(N) = \uwave{\dim}{}_{F,C^k}(N) = 0 \quad, \owave{\dim}_{F,C^k}(N) = \overline{\dim}_{F,C^k}(N) = \dim_H(N) = d-1 .$$
\end{example}

\begin{example}
We can finally state the result that we wanted to state. Let $M$ be a convex-cocompact hyperbolic surface. Let $NW(\phi) \subset T^1 M$ be the non-wandering set of the geodesic flow $\phi$, seen as lying in the unit tangent bundle of $M$. Then:
$$ \underline{\dim}_{F,C^2}(NW(\phi)) > 0. $$
\end{example}

\begin{example}

Let $L$ be a 1-dimensionnal manifold, and let $E \subset L$ be \underline{\textbf{any compact subset}}. Then:
$$ \overline{\dim}_{F,C^1} E = \dim_H E. $$
This very striking result is proved in \cite{Ek16}. Also, Ekstrom proves that, for any $k \geq 1$, we have $\overline{\dim}_{F,C^k} E \geq (\dim_H E)/k$. This motivates the following question: do we have, for any compact set $E$ in any manifold $M$, the formula $\overline{\dim}_{F,C^1}(E) = \dim_H(E) $ ? 
\end{example}

\begin{remark}
Other natural questions are the following. Can we find an example of set $E \subset \mathbb{R}^d$ such that $\underline{\dim}_{F,C^k}(E) < \uwave{\dim}{}_{F,C^k}(E)$ ? Or is it always an equality ? Is the lower Fourier dimension $C^k$-intrinsic ?
\end{remark}

For completeness, we conclude by introducing the real variation for the lower Fourier dimension. We will not introduce this variation for the upper Fourier dimension, as we said earlier that these seems to behave quite badly with respect to the Hausdorff dimension. To keep it concise, we will not discuss the directionnal variations.

\begin{definition}
Let $E \subset M$ be a compact subset of some smooth manifold $M$. Define the lower Fourier dimension with $C^k$-phases by:
$$ \underline{\dim}_{F,C^k}^{\text{real}}(E) := \sup \{ \underline{\dim}_{F,C^k}^{\text{real}} \mu \leq d \ , \ \mu \in \mathcal{P}(E) \}. $$
\end{definition}

\begin{remark}
By Lemma A.11, we see that $\underline{\dim}_{F,C^k}(E) \leq \underline{\dim}_{F,C^k}^{\text{real}}(E)$. Is this an equality, or are we able to produce an exemple were this inequality is strict ? A related question is: if we denote by $\mathcal{P}_{reas,C^k}(E)$ the set of probability measures that admits reasonnables constants for $C^k$-phases (see Definition A.12), do we have
$$ \underline{\dim}_{F,C^k}(E) = \sup\{ \underline{\dim}_{F,C^k} \mu \leq d  , \ \mu \in \mathcal{P}_{reas,C^k}(E) \} \quad ? $$
\end{remark}

\end{appendices}

\end{document}